\titlespacing*{\section}{0pt}{\baselineskip}{0pt}
\titlespacing*{\subsection}{0pt}{0.5\baselineskip}{0pt}
\setlist{leftmargin=0.8cm,topsep=0pt,itemsep=-2pt}
\setlist[enumerate]{label=\rm{(\roman*)}}
\numberwithin{equation}{section}
\g@addto@macro\normalsize{%
  \setlength\abovedisplayskip{0.4\baselineskip plus 0.4\baselineskip}
  \setlength\belowdisplayskip{0.4\baselineskip plus 0.4\baselineskip}
  \setlength\abovedisplayshortskip{-0.3\baselineskip}
  \setlength\belowdisplayshortskip{0.4\baselineskip plus 0.4\baselineskip}
}
\def\blfootnote{\gdef\@thefnmark{}\@footnotetext} \makeatother
\newcommand{\dateline}[1]{\enlargethispage{18pt}\blfootnote{\phantom{\Large M}\hspace{-1em}\emph{Date} #1}}
\renewenvironment{thebibliography}[1]
{ \begin{oldthebibliography}{#1}
  \setlength{\parskip}{0pt}
  \setlength{\itemsep}{2pt plus 0.3ex}
  \bgroup\footnotesize }
{ \egroup \end{oldthebibliography} }
\newtheoremstyle{shdefinition}{\topsep}{0.4\topsep}{}{}{\bfseries}{.}{0.5em}{} 
\newtheoremstyle{shplain}{\topsep}{0.4\topsep}{\itshape}{}{\bfseries}{.}{0.5em}{} 
\theoremstyle{shdefinition}
\newtheorem{definition}{Definition}[section]
\newtheorem*{definition*}{Definition}
\newtheorem{shdefinition}{Definition}
\newtheorem{remark}[definition]{Remark}
\newtheorem{shremark}[shdefinition]{Remark}
\newtheorem{example}[definition]{Example}
\newtheorem{notation}[definition]{Notation}
\theoremstyle{shplain}
\newtheorem{theorem}[definition]{Theorem}
\newtheorem{shtheorem}[shdefinition]{Theorem}
\newtheorem{corollary}[definition]{Corollary}
\newtheorem{shcorollary}[shdefinition]{Corollary}
\newtheorem{proposition}[definition]{Proposition}
\newtheorem{lemma}[definition]{Lemma}
\renewcommand{\a}{\alpha}
\newcommand{\g}{\gamma}
\renewcommand{\d}{\delta}
\newcommand{\e}{\varepsilon}
\newcommand{\p}{\varphi}
\renewcommand{\wp}{\widetilde{\varphi}}
\newcommand{\s}{\sigma}
\newcommand{\ws}{\widetilde{\sigma}}
\renewcommand{\l}{\lambda}
\renewcommand{\t}{\tau}
\newcommand{\C}{\mathcal{C}}
\renewcommand{\L}{\mathcal{L}}
\newcommand{\M}{\mathcal{M}}
\renewcommand{\S}{\mathcal{S}}
\newcommand{\<}{\langle}
\renewcommand{\>}{\rangle}
\renewcommand{\leq}{\leqslant}
\renewcommand{\geq}{\geqslant}
\newcommand{\soc}{\operatorname{soc}}
\newcommand{\Aut}{\operatorname{Aut}}
\newcommand{\Out}{\operatorname{Out}}
\newcommand{\Inndiag}{\operatorname{Inndiag}}
\newcommand{\Outdiag}{\operatorname{Outdiag}}
\newcommand{\fpr}{\operatorname{fpr}}
\newcommand{\tr}{\mathsf{T}}
\newcommand{\F}{\mathbb{F}}
\newcommand{\FF}{\overline{\F}}
\renewcommand{\:}{\colon}
\newcommand{\ord}{\mathrm{ord}}
\renewcommand{\mod}[1]{\mathrm{ \ } (\mathrm{mod\ } #1)}
\def\localbig#1#2{%
  \sbox\z@{$\m@th#1
    \sbox\tw@{$#1()$}%
    \dimen@=\ht\tw@\advance\dimen@\dp\tw@
    \nulldelimiterspace\z@\left#2\vcenter to1.2\dimen@{}\right.
  $}\box\z@}
\renewcommand{\div}{\mathrel{\mathpalette\divaux\relax}}
\newcommand{\ndiv}{\mathrel{\mathpalette\ndividesaux\relax}}
\newcommand{\divaux}[2]{\mbox{$\m@th#1\localbig{#1}|$}}
\newcommand{\ndividesaux}[2]{%
  \mkern.5mu
  \ooalign{%
    \hidewidth$\m@th#1\localbig{#1}|$\hidewidth\cr
    $\m@th#1\nmid$\cr%
  }%
}
\newcommand{\SL}{\operatorname{SL}}
\newcommand{\GL}{\operatorname{GL}}
\newcommand{\PSL}{\operatorname{PSL}}
\newcommand{\PGL}{\operatorname{PGL}}
\newcommand{\PGaL}{\operatorname{P}\!\Gamma\!\operatorname{L}}
\newcommand{\Sp}{\operatorname{Sp}}
\newcommand{\GSp}{\operatorname{GSp}}
\newcommand{\PSp}{\operatorname{PSp}}
\newcommand{\PGSp}{\operatorname{PGSp}}
\newcommand{\PGaSp}{\operatorname{P}\!\Gamma\!\operatorname{Sp}}
\newcommand{\SU}{\operatorname{SU}}
\newcommand{\GU}{\operatorname{GU}}
\newcommand{\PSU}{\operatorname{PSU}}
\newcommand{\PGU}{\operatorname{PGU}}
\newcommand{\PGaU}{\operatorname{P}\!\Gamma\!\operatorname{L}}
\newcommand{\Om}{\Omega}
\newcommand{\SO}{\operatorname{SO}}
\renewcommand{\O}{\operatorname{O}}
\newcommand{\DO}{\operatorname{DO}}
\newcommand{\GO}{\operatorname{GO}}
\newcommand{\POm}{\operatorname{P}\!\Om}
\newcommand{\PSO}{\operatorname{PSO}}
\newcommand{\PO}{\operatorname{PO}}
\newcommand{\PDO}{\operatorname{PDO}}
\newcommand{\PGO}{\operatorname{PGO}}
\newcommand{\Spin}{\operatorname{Spin}}
\begin{document}
 
\begin{center} 
{\LARGE \textbf{Shintani descent, simple groups and spread}} \\[11pt]
{\Large Scott Harper}                                        \\[22pt]
\end{center}

\begin{center}
\begin{minipage}{0.8\textwidth}
\small The spread of a group $G$, written $s(G)$, is the largest $k$ such that for any nontrivial elements $x_1, \dots, x_k \in G$ there exists $y \in G$ such that $G = \< x_i, y \>$ for all $i$. Burness, Guralnick and Harper recently classified the finite groups $G$ such that $s(G) > 0$, which involved a reduction to almost simple groups. In this paper, we prove an asymptotic result that determines exactly when $s(G_n) \to \infty$ for a sequence of almost simple groups $(G_n)$. We apply probabilistic and geometric ideas, but the key tool is Shintani descent, a technique from the theory of algebraic groups that provides a bijection, the Shintani map, between conjugacy classes of almost simple groups. We provide a self-contained presentation of a general version of Shintani descent, and we prove that the Shintani map preserves information about maximal overgroups. This is suited to further applications. Indeed, we also use it to study $\mu(G)$, the minimal number of maximal overgroups of an element of $G$. We show that if $G$ is almost simple, then $\mu(G) \leq 3$ when $G$ has an alternating or sporadic socle, but in general, unlike when $G$ is simple, $\mu(G)$ can be arbitrarily large. \par
\end{minipage}
\end{center}

\dateline{\today; \ \emph{Keywords} almost simple groups, maximal subgroups, Shintani descent, spread}

\section{Introduction} \label{s:intro}

Generation questions about finite groups, especially finite simple groups, have attracted much attention for many years, and asymptotic behaviour has been an important theme. For example, a landmark result of Liebeck and Shalev \cite{ref:LiebeckShalev95} asserts that if $(G_i)$ is a sequence of finite simple groups satisfying $|G_i| \to \infty$, then the probability that two randomly chosen elements generate $G_i$ tends to $1$, which proves Dixon's conjecture of 1969 \cite{ref:Dixon69}.

A recent programme of research \cite{ref:BrennerWiegold75,ref:BreuerGuralnickKantor08,ref:BurnessGuest13,ref:BurnessGuralnickHarper,ref:GuralnickKantor00,ref:GuralnickShalev03,ref:Harper17,ref:Harper} has focussed on \emph{spread}, a notion that captures how generating pairs are distributed across a $2$-generated group (this notion is related to the \emph{generating graph} and the \emph{product replacement graph}, both of which have been the subject of recent research, see the discussion before \cite[Corollaries~6 and~7]{ref:BurnessGuralnickHarper}.)

\begin{definition*}
Let $G$ be a finite noncyclic group. 
\begin{enumerate}
\item The \emph{spread} of $G$, denoted $s(G)$, is the largest integer $k$ such that for any nontrivial elements $x_1, \dots, x_k$ in $G$, there exists $y \in G$ with $G = \< x_i, y \>$ for all $i$.
\item The \emph{uniform spread} of $G$, denoted $u(G)$, is the largest integer $k$ such that there is a conjugacy class $C$ of $G$ with the property that for any nontrivial elements $x_1, \dots, x_k$, there exists $y \in C$ with $G = \< x_i, y \>$ for all $i$. Here we say that $C$ witnesses $u(G) \geq k$.
\end{enumerate}
\end{definition*}

Observe that $s(G) > 0$ if and only if every nontrivial element of $G$ is contained in a generating pair (a property known as \emph{$\frac{3}{2}$-generation}). It is easy to see that if $s(G) > 0$ then every proper quotient of $G$ is cyclic. Recently, Burness, Guralnick and Harper \cite{ref:BurnessGuralnickHarper} settled a conjecture of Breuer, Guralnick and Kantor \cite{ref:BreuerGuralnickKantor08}, by proving that the converse is true. In fact, they proved that if every proper quotient of $G$ is cyclic, then $s(G) \geq 2$. The proof involves reducing to almost simple groups. If $G$ is simple, then $s(G) \geq 2$ was proved in \cite{ref:BreuerGuralnickKantor08}, but the almost simple groups pose further challenges and the series of papers \cite{ref:BurnessGuest13,ref:BurnessGuralnickHarper,ref:Harper17,ref:Harper} are dedicated to proving $s(G) \geq 2$ in this case. 

In \cite{ref:GuralnickShalev03}, Guralnick and Shalev determined exactly when a sequence of simple groups $(G_i)$ satisfies $s(G_i) \to \infty$. However, the asymptotic behaviour of the spread of almost simple groups is not yet known. By \cite[Corollary~9]{ref:BurnessGuralnickHarper}, the unsettled case involves sequences of groups of Lie type defined over a field of fixed size. We settle this in our first main theorem. Here, and in Corollary~\ref{cor:spread}, we use the term \emph{graph automorphism} in the sense of \cite[Definition~2.5.13]{ref:GorensteinLyonsSolomon98}.

\begin{shtheorem}\label{thm:spread}
Fix a prime power $q$. Let $(G_i)$ be a sequence of almost simple groups of Lie type defined over $\F_q$ such that $G_i/\soc(G_i)$ is cyclic and $|G_i| \to \infty$. Write $G_i = \<\soc(G_i),x_i\>$. Then the following are equivalent
\begin{enumerate}
\item $s(G_i) \to \infty$
\item $u(G_i) \to \infty$
\item $(G_i)$ has no infinite subsequence where one of the following holds
\begin{enumerate}[{\rm (a)}]
\item $\soc(G_i) \in \{ \Sp_{2m_i}(q) \, \text{($q$ even)}, \ \Om_{2m_i+1}(q) \}$ 
\item $\soc(G_i) \in \{ \PSL^\pm_{2m_i+1}(q),\, \POm^\pm_{2m_i}(q) \}$ and a power of $x_i$ is a graph automorphism.
\end{enumerate}
\end{enumerate}
\end{shtheorem}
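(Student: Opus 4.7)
The implication (ii)$\Rightarrow$(i) is immediate from the definitions, since any class witnessing $u(G_i) \geq k$ also shows $s(G_i) \geq k$. It therefore suffices to establish (iii)$\Rightarrow$(ii) and (i)$\Rightarrow$(iii); the latter I would prove via its contrapositive, by showing that $s(G_i)$ is bounded on any sequence $(G_i)$ falling entirely into case (a) or case (b).

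For the exceptional cases, the strategy is to exhibit small-support elements that are forced into too many maximal subgroups. In case (a), the symplectic groups $\Sp_{2m}(q)$ in even characteristic and the odd-dimensional orthogonal groups $\Om_{2m+1}(q)$ contain transvections, which lie in the stabilizer of every isotropic $1$-space passing through their centre; for a short list of transvections $x_1,\dots,x_k$ with distinct centres, any common complement $y$ must avoid all of these large families of reducible subgroups, and a counting argument gives a constant upper bound on $s(G_i)$ independent of $m_i$. For (b), if a power of $x_i$ is a graph involution of $\PSL^\pm_{2m_i+1}(q)$ or $\POm^\pm_{2m_i}(q)$, then a similar short list of elements with a large fixed subspace in the natural module forces them into too many stabilizers of decompositions to admit a common complement.

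For the main implication (iii)$\Rightarrow$(ii), write $G_i = \<T_i,x_i\>$ with $T_i = \soc(G_i)$, and aim to produce a conjugacy class $C_i \subseteq T_i x_i$ witnessing $u(G_i) \geq k_i$ with $k_i \to \infty$. The standard probabilistic estimate gives
\[ P\bigl(\<s,y\> \neq G_i \,\bigm|\, y \in C_i\bigr) \;\leq\; \sum_{\substack{H \leq G_i \text{ maximal} \\ C_i \subseteq H}} \fpr(s, G_i/H), \]
so it suffices to choose $C_i$ for which the right-hand side tends to $0$ uniformly over nontrivial $s \in G_i$. Shintani descent, as developed in the paper, is the tool for selecting and analysing $C_i$: the Shintani map gives a bijection between $T_i$-conjugacy classes in the coset $T_i x_i$ and ordinary conjugacy classes in a related fixed-point subgroup of the ambient algebraic group, and this bijection transports information about maximal overgroups. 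I would choose $C_i$ to descend to a regular semisimple element of large order, whose centralizer and overgroup structure are easy to control, and then use the maximal-overgroup preservation to read off the short list of Aschbacher classes of maximal subgroups of $G_i$ that can contain $C_i$.

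The fixed point ratio bounds for each Aschbacher class are provided by the existing literature on finite classical groups (e.g.\ results of Liebeck--Seitz, Burness, and Guralnick--Magaard--Saxl). The main obstacle is uniformity in $s$: the bound $\fpr(s, G_i/H)$ becomes unfavourable precisely for elements $s$ of small support, such as transvections and graph-type involutions with a large $1$-eigenspace. Hypothesis (iii) is tailored exactly to rule out the sequences where such elements prevent the sum over $H$ from tending to zero---avoiding case (a) keeps transvections in check, while avoiding case (b) controls graph involutions in $T_i x_i$. The technical heart of the argument will thus be a case-by-case comparison, Aschbacher-class by Aschbacher-class, between the available $\fpr$ bounds for small-support elements $s$ and the restrictions on maximal overgroups of $y \in C_i$ produced by Shintani descent.
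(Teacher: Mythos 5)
Your proposal gets the overall architecture right: (ii)$\Rightarrow$(i) is trivial, the upper bound (i)$\Rightarrow$(iii) should come from subspace obstructions, and the lower bound (iii)$\Rightarrow$(ii) uses the probabilistic method with Shintani descent to select the witnessing class and to control its maximal overgroups. For (iii)$\Rightarrow$(ii) your sketch is essentially the paper's argument, and your diagnosis of where uniformity in $s$ breaks down is accurate. (Two technical points you leave implicit: the paper needs \emph{outer} Shintani descent to ensure the selected element actually lies in $Tx_i$ and not some other $\Inndiag(T_i)$-coset, and there is a separate, more delicate Shintani setup needed when $\s_1$ is not a power of $\s_2$ --- this is exactly the case that forced the paper to develop the general version of Shintani descent in the first place.)

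The genuine gap is in (i)$\Rightarrow$(iii). You propose to take a short list of transvections (resp.\ small-support elements) and claim ``a counting argument'' bounds $s(G_i)$. But no such counting works on its own: for fixed $q$ and growing $m$, there are arbitrarily many elements of $T_ix_i$ that avoid the point stabilizers containing any fixed bounded list of small-support elements, unless you already know something forcing \emph{every} element of $T_ix_i$ into a reducible subgroup. The crucial ingredient, which your proposal never identifies, is the structural result (the paper's Theorem~\ref{thm:subspaces}, together with Proposition~\ref{prop:symplectic} for the symplectic case) that in the exceptional cases (a) and (b) \emph{every} element of the coset $T_ix_i$ stabilizes a $1$-space (or, in one orthogonal subcase, a $2$-space) of the natural module or a closely related module. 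Once that is known, Proposition~\ref{prop:subspaces_spread} shows that roughly $q^{4ud}$ elements of $T$ centralizing small nondegenerate subspaces suffice to ``cover'' every possible $1$-space a putative complement $y$ could fix, giving a bound depending only on $q$. That structural theorem is itself proved by Shintani descent (Proposition~\ref{prop:subspaces_shintani}), so the descent machinery is load-bearing in both directions of the proof, not only in (iii)$\Rightarrow$(ii). Without naming and proving that every element of $T_ix_i$ stabilizes a low-dimensional subspace, your counting step has no engine driving it.
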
 

\begin{shremark}\label{rem:spread}
There is a geometric characterisation of the exceptions in Theorem~\ref{thm:spread}(iii). Roughly, if $G = \< T, x \>$ is an almost simple classical group, then the exceptions in Theorem~\ref{thm:spread} are the groups where every element of $Tx$ fixes a $1$-dimensional subspace of the natural module for $T$ (or a closely related module), see Theorem~\ref{thm:subspaces} and Remark~\ref{rem:spread_symplectic} for precise statements.
\end{shremark}

Combining Theorem~\ref{thm:spread} with \cite[Corollary~9]{ref:BurnessGuralnickHarper} gives a complete characterisation of when the spread or uniform spread of a sequence of almost simple groups diverges to infinity.

\begin{shcorollary}\label{cor:spread}
Let $(G_i)$ be a sequence of almost simple groups such that $G/{\soc(G_i)}$ is cyclic and $|G_i| \to \infty$. Write $G_i = \<\soc(G_i),x_i\>$. Then the following are equivalent
\begin{enumerate}
\item $s(G_i) \to \infty$
\item $u(G_i) \to \infty$
\item $(G_i)$ has no infinite subsequence where one of the following holds
\begin{enumerate}[{\rm (a)}]
\item $G_i = A_{n_i}$ where $n_i$ is divisible by a fixed prime
\item $G_i = S_{n_i}$
\item $\soc(G_i) \in \{ \Sp_{2m_i}(q) \, \text{($q$ even)}, \ \Om_{2m_i+1}(q) \}$ for fixed $q$
\item $\soc(G_i) \in \{ \PSL^\pm_{2m_i+1}(q),\, \POm^\pm_{2m_i}(q) \}$ for fixed $q$, and $x_i$ powers to a graph automorphism.
\end{enumerate}
\end{enumerate}
\end{shcorollary}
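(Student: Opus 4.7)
The plan is to obtain Corollary~\ref{cor:spread} by gluing Theorem~\ref{thm:spread}, which treats Lie type groups over a fixed field, to \cite[Corollary~9]{ref:BurnessGuralnickHarper}, which the introduction states covers everything else. Since $u(G)\leq s(G)$ by definition, the implication (ii)$\Rightarrow$(i) is automatic, so I only need to establish (i)$\Rightarrow$(iii) and (iii)$\Rightarrow$(ii).

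My first step would be to invoke the trivial observation that a sequence of nonnegative integers diverges to infinity if and only if every infinite subsequence does. Using this I can pass to subsequences and assume that the whole sequence $(G_i)$ lies in a single one of the following families: sporadic almost simple groups; almost simple groups with alternating socle (giving either (iii)(a) or (iii)(b)); almost simple groups of Lie type with $q_i\to\infty$ or Lie rank tending to infinity; and almost simple groups of Lie type defined over a single fixed $\F_q$. The sporadic family is finite and so contributes nothing under the hypothesis $|G_i|\to\infty$.

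Next, for each of the remaining three families I would apply the relevant result. Along the alternating/symmetric family and the family of Lie type groups with $q_i\to\infty$ or rank tending to infinity, \cite[Corollary~9]{ref:BurnessGuralnickHarper} gives the equivalence of (i), (ii) and the absence of an exceptional subsequence of type (iii)(a) or (iii)(b). Along the remaining family, where all $G_i$ have a common field of definition $\F_q$, Theorem~\ref{thm:spread} gives exactly the same equivalence, with exceptions matching (iii)(c) and (iii)(d). Since the original sequence is a finite disjoint union of such subsequences, divergence of $s(G_i)$ (respectively $u(G_i)$) on the whole sequence is equivalent to divergence along every subfamily, which in turn is equivalent to the absence, in any subfamily, of an exceptional subsequence of one of the types (iii)(a)--(d).

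I do not expect any real obstacle: the content has already been proved, and the argument is essentially a case split followed by an amalgamation of the two exception lists. The only point requiring care is bookkeeping, namely checking that the exceptions stated in \cite[Corollary~9]{ref:BurnessGuralnickHarper} for almost simple groups with $q\to\infty$ or rank tending to infinity really do reduce to (iii)(a) and (iii)(b) here (so that no Lie type exception is counted twice), and that the formulations of boundedness of $s$ and $u$ in the two references agree, which they do since both are integer-valued invariants with $u\leq s$.
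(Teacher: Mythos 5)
Your proposal matches the paper's own treatment, which is simply to assert that the corollary follows by combining Theorem~\ref{thm:spread} with \cite[Corollary~9]{ref:BurnessGuralnickHarper}; the paper gives no further proof, so your outline is, if anything, more detailed. One small imprecision worth flagging: the ``trivial observation'' you invoke only lets you pass to finitely many subfamilies, but ``Lie type over a fixed $\F_q$'' ranges over infinitely many fields, so you cannot literally ``assume the whole sequence lies in a single family.'' The clean fix is the contrapositive argument you are implicitly running: given a subsequence along which $u(G_i)$ is bounded, extract a further subsequence within one socle type (pigeonhole on alternating/sporadic/Lie type), and in the Lie type case observe that either some $q$ occurs infinitely often (apply Theorem~\ref{thm:spread}) or the defining field sizes are unbounded along a further subsequence (apply \cite[Corollary~9]{ref:BurnessGuralnickHarper}). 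This makes the case split airtight; the rest of your bookkeeping is correct.
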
 

The key tool in the proof of Theorem~\ref{thm:spread} is a general form of \emph{Shintani descent}. This technique from algebraic groups has played an important role in the character theory of almost simple groups over several decades, beginning with the work of Shintani and Kawanaka (see \cite{ref:CabanesSpath19,ref:Deshpande16,ref:DigneMichel94,ref:Kawanaka77,ref:Shintani76} for example). More recently, Shintani descent has been useful in studying the almost simple groups themselves, especially in the context of spread \cite{ref:BurnessGuest13,ref:BurnessGuralnickHarper,ref:Harper17,ref:Harper}.

Let us introduce the key idea of Shintani descent. Let $X$ be a connected algebraic group and let $\s_1$ and $\s_2$ be commuting Steinberg endomorphisms of $X$. Then there exists a bijection $F$, called the \emph{Shintani map} of $(X,\s_1,\s_2)$, between the conjugacy classes in the cosets $X_{\s_1}\s_2$ and $X_{\s_2}\s_1$, where $X_{\s_i}$ is set of fixed points of $X$ under $\s_i$. 

Previous applications of Shintani descent to group theoretic problems have assumed that $\s_1 = \s_2^e$, and, as we explain in Remark~\ref{rem:shintani_power}, this makes some almost simple groups not amenable to this technique. Therefore, we take the opportunity in Section~\ref{s:setup} to demonstrate how all almost simple groups can be studied in a uniform manner using this more general version of Shintani descent.

To use Shintani descent effectively, we must understand what is preserved by the Shintani map $F$, and for our applications, we are particularly interested in how the maximal overgroups of $g$ in $\<X_{\s_1},\s_2\>$ relate to the maximal overgroups of $g_0$ in $\<X_{\s_2},\s_1\>$ if $F(g^{X_{\s_1}}) = g_0^{X_{\s_2}}$. When $\s_1=\s_2^e$, partial information is given by \cite[Lemma~3.3.2]{ref:Harper}, namely if $Y \leq X$ is a closed connected $\<\s_1,\s_2\>$-stable subgroup such that $N_{X_{\s_i}}(Y_{\s_i}) = Y_{\s_i}$, then the $\<X_{\s_1},\s_2\>$-conjugates of $\<Y_{\s_1},\s_2\>$ that contain $g$ correspond to the $\<X_{\s_2},\s_1\>$-conjugates of $\<Y_{\s_2},\s_1\>$ that contain $g_0$. 

To prove Theorem~\ref{thm:spread}, we need to be able to study maximal overgroups that arise from disconnected subgroups $Y \leq X$. Our next main result allows us to do this. Here for a closed $\s_i$-stable subgroup $Y \leq X$, we fix a set $R_i(Y) \subseteq X$ such that for all $x \in X$ the subgroup $(Y^x)_{\s_i}$ is $X_{\s_i}$-conjugate to $(Y^r)_{\s_i}$ for exactly one $r \in R_i(Y)$ (see Remark~\ref{rem:shintani_subgroups} for details).

\begin{shtheorem} \label{thm:shintani_subgroups}
Let $X$ be connected, let $\s_1$ and $\s_2$ be commuting Steinberg endomorphisms of $X$ and let $F$ be the Shintani map of $(X,\s_1,\s_2)$. Let $Y \leq X$ be a closed $\<\s_1,\s_2\>$-stable subgroup satisfying $N_{X_{s\s_i}}(Y^\circ_{s\s_i}) = N_X(Y^\circ)_{s\s_i}$ for all $s \in N_X(Y^\circ)$. If $F(g^{X_{\s_1}}) = g_0^{X_{\s_2}}$, then the overgroups of $g$ in
\[
\{ (\<Y,\s_2\>^r)_{\s_1}^x \mid \text{$r \in R_1(Y)$ and $x \in \<X_{\s_1},\s_2\>$} \}
\]
are in bijection with the overgroups of $g_0$ in
\[
\{ (\<Y,\s_1\>^r)_{\s_2}^x \mid \text{$r \in R_2(Y)$ and $x \in \<X_{\s_2},\s_1\>$} \}.
\]
\end{shtheorem}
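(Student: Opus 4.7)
The strategy is to use a Shintani element associated with the pair $(g, g_0)$ to transport overgroups from one side to the other, generalising the connected case \cite[Lemma~3.3.2]{ref:Harper} by combining it with a careful analysis of how component groups behave under Shintani descent.

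First, I would write $g = x\s_2$ with $x \in X_{\s_1}$ and invoke the Lang--Steinberg theorem to produce $a \in X$ satisfying $a^{-1}\s_2(a) = x$. A short computation in the semidirect product of $X$ with $\<\s_1,\s_2\>$ then gives the crucial identities $a^{-1}\s_2 a = g$ and $a\s_1 a^{-1} = g_0$, where $g_0 = a\s_1(a)^{-1}\s_1$. Thus conjugation by $a$ exchanges the untwisted Steinberg endomorphisms $\s_1, \s_2$ with the coset elements $g_0, g$, and this is the core mechanism by which overgroups are transferred between the two sides.

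Next I would take an overgroup $H = (\<Y,\s_2\>^r)_{\s_1}^x$ of $g$ and, after replacing $H$ by a suitable $\<X_{\s_1},\s_2\>$-conjugate, arrange that $g$ already lies in $(\<Y,\s_2\>^r)_{\s_1}$ itself, so that $a^{-1}\<Y,\s_2\>^r a$ contains $\s_2$ and can be rewritten in the form $\<Y^s,\s_2\>$ for some $s \in N_X(Y^\circ)$. Re-running the Shintani construction inside $N_X(Y^\circ)$ with the roles of $\s_1$ and $\s_2$ interchanged then produces a target overgroup $(\<Y,\s_1\>^{r'})_{\s_2}^{x'}$ of $g_0$. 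The hypothesis $N_{X_{s\s_i}}(Y^\circ_{s\s_i}) = N_X(Y^\circ)_{s\s_i}$ is exactly what guarantees that the full fixed-point subgroup is determined by its intersection with $Y^\circ$ together with the induced coset structure in $N_X(Y^\circ)/Y^\circ$, so the correspondence respects the parametrisations by $R_1(Y)$ and $R_2(Y)$.

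Finally, I would verify that this assignment is bijective. Symmetry in $\s_1$ and $\s_2$ makes the inverse construction immediate, so the content of the proof reduces to well-definedness: distinct representatives $r \in R_1(Y)$ must yield distinct $r' \in R_2(Y)$, and the assignment must be independent of the conjugating element $x$. I expect this bookkeeping to be the main obstacle, since one must show that $X_{\s_1}$-conjugacy classes of fixed-point subgroups $(Y^r)_{\s_1}$ correspond bijectively under Shintani to the $X_{\s_2}$-conjugacy classes of fixed-point subgroups $(Y^{r'})_{\s_2}$. The normaliser hypothesis is precisely what prevents these orbits of components from splitting or merging in incompatible ways on the two sides; without it the Shintani correspondence for the connected part $Y^\circ$ would fail to extend to the full (disconnected) subgroups, which is the crux of what the theorem is asserting.
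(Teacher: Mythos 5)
Your proposal identifies the right ingredients (the Shintani element $a$ from Lemma~\ref{lem:shintani_explicit}, the re-application of Lang--Steinberg inside $N_X(Y^\circ)$ with twisted endomorphisms, and the role of the normaliser hypothesis) but it takes a fundamentally different route from the paper, and it is precisely the step you dismiss as ``bookkeeping'' that contains all the substance.

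The paper does not construct an explicit bijection at all. Instead, the strong form (Theorem~\ref{thm:shintani_subgroups_strong}) is proved by a pure counting argument: for each compatible pair $(s,t)$ in $N_X(Y^\circ)$ it writes down the number $m_{(s,t)}$ of $\<X_{\s_1},\ws_2\>$-conjugates of the coset $(Y^\circ_{s\s_1}t\ws_2)^{s_1}$ containing $g$ as a fixed-point-ratio expression involving $|(g^{s_1^{-1}})^{X_{s\s_1}} \cap Y^\circ_{s\s_1}t\ws_2|$, $|C_{X_{\s_1}}(g)|$ and $|N_{X_{s\s_1}}(Y^\circ_{s\s_1}t\ws_2)|$, and similarly for $n_{(t,s)}$. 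The Shintani map $E_{(s,t)}$ of $(Y^\circ,s\s_1,t\s_2)$ is then used to match the class-intersection counts, centraliser orders are matched via Theorem~\ref{thm:shintani} (for $X$) and again for $Y^\circ$, and the normaliser orders are matched using the hypothesis $N_{X_{s\s_i}}(Y^\circ_{s\s_i}) = N_X(Y^\circ)_{s\s_i}$ to rewrite both normalisers as centralisers in $(N_X(Y^\circ)/Y^\circ)_{s\s_1}$ and $(N_X(Y^\circ)/Y^\circ)_{t\s_2}$, which coincide by the elementary identity $|C_A(\alpha) \cap C_A(\beta)| = |C_A(\beta) \cap C_A(\alpha)|$. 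Part (i) then follows from part (ii) by a careful re-indexing of the double sum over $N_X(Y^\circ)/Y^\circ$-orbits of commuting pairs $(Y^\circ s\s_1, Y^\circ t\s_2)$.

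Your transport-by-$a$ construction, by contrast, produces at best one candidate target overgroup per source overgroup and then has to be shown to be well-defined, independent of the many Lang--Steinberg choices ($a$ itself is only determined up to $X_{\s_2}$-translation), and bijective. Nothing in the proposal actually does this; you explicitly flag it as ``the main obstacle'' and leave it there. That obstacle is not peripheral: it is exactly the equality $m_{(s,t)} = n_{(t,s)}$, which even in the connected case is established by a fixed-point-ratio count rather than by an explicit conjugation bijection. Likewise, the normaliser hypothesis is invoked as providing the ``right'' control of components without ever appearing in a concrete step. As written, the proposal is a plausible plan but not a proof; to complete it you would either have to recreate the counting argument anyway, or do genuine extra work to verify that your transport is a bijection, work the paper deliberately avoids by not constructing one.

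One small bookkeeping caution in your setup: depending on whether one writes $(g,\s_1)=(\s_2,g_0)^a$ with $x^a = axa^{-1}$ or $x^a = a^{-1}xa$, one gets $g = a\s_2 a^{-1}$, $g_0 = a^{-1}\s_1 a$ or the reverse; your stated identities are the opposite orientation from the paper's Lemma~\ref{lem:shintani_explicit}, so make sure you are consistent if you pursue this.
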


To prove Theorem~\ref{thm:shintani_subgroups} we require the more general version of Shintani descent in Section~\ref{s:shintani}, which is based on Deshpande's approach \cite{ref:Deshpande16} that does not assume that $X$ is connected. In fact, our version is even more general since we do not assume that $\s_1$ and $\s_2$ commute.

Let us now highlight the important roles that Theorem~\ref{thm:shintani_subgroups} plays in our proof of Theorem~\ref{thm:spread}. Let $G = \<T,x\>$ be an almost simple classical group with socle $T$.

As suggested by Remark~\ref{rem:spread}, upper bounds on the spread of classical groups $G$ featured in part~(iii) will arise from studying the subspaces of the natural module for $T$ that are stabilised by elements of $G$ (see Corollary~\ref{cor:subspaces_spread}). To do this, we will apply Theorem~\ref{thm:shintani_subgroups} in the case where $X$ is a simple classical group and $Y$ is a subspace stabiliser (which may be disconnected).

To obtain lower bounds on uniform spread, we apply the probabilistic method introduced by Guralnick and Kantor \cite{ref:GuralnickKantor00}. Here, the idea is to identify an element $s \in G$ such that the set $\M(G,s)$ of maximal subgroups of $G$ that contain $s$ is small, because if
\[
\sum_{H \in \M(G,s)} \frac{|x^G \cap H|}{|x^G|} < \frac{1}{k}
\]
then $s^G$ witnesses $u(G) \geq k$ (see Lemma~\ref{lem:prob_method}). Without loss of generality, if $s^G$ witnesses $u(G) \geq k > 0$, then $s \in Tx$. Shintani descent is then the crucial tool for understanding the conjugacy classes in $Tx$. Roughly, we find a Shintani map $F$ that puts the classes in $Tx$ in bijection with the classes in another coset $T_0x_0$ that is easier to understand. We choose an element $s \in Tx$ by specifying its image $s_0 \in T_0x_0$ under $F$, and we use Theorem~\ref{thm:shintani_subgroups} to deduce $\M(G,s)$ from information about $s_0$.

Shintani descent has had applications to other problems in group theory (for example, in \cite{ref:FulmanGuralnick12,ref:GuestMorrisPraegerSpiga15}) and we anticipate the general version in this paper will be useful in future. Indeed, in Section~\ref{s:subgroups}, we give some additional applications of Theorem~\ref{thm:shintani_subgroups}, which we now discuss.

Motivated by the \emph{uniform domination number} (an invariant related to uniform spread), Burness and Harper \cite{ref:BurnessHarper19}, defined $\mu(G)$ as the minimal number of maximal overgroups of an element of $G$. They showed $\mu(G) \leq 3$ if $G$ is simple, with four exceptions where $4 \leq \mu(G) \leq 7$, and they determined $\mu(G)$ exactly when $G$ is alternating or sporadic.

The following theorem summarises our results on $\mu(G)$ for almost simple groups $G$.

\begin{shtheorem} \label{thm:subgroups}
The following hold.
\begin{enumerate}
\item If $G$ is an almost simple group with alternating or sporadic socle, then $\mu(G) \leq 3$.
\item There is no constant $c$ such that for all almost simple groups $G$ we have $\mu(G) \leq c$.
\item There are infinitely many nonsimple almost simple groups of Lie type $G$ such that $\mu(G) = 1$.
\end{enumerate}
\end{shtheorem}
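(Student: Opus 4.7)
\emph{Plan.} I address the three parts separately, with (ii) being the main obstacle.

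\textbf{Part (i).} For $G$ simple with alternating or sporadic socle, the bound $\mu(G) \leq 3$ is \cite{ref:BurnessHarper19}, so only proper almost simple extensions require treatment: $G = S_n$ for $n \geq 5$, the three groups strictly between $A_6$ and $\Aut(A_6)$, and the finitely many proper almost simple extensions of sporadic simple groups. For $G = S_n$, I would take $x = (1,2,\ldots,n-1) \in S_n$. Since $x$ is intransitive, it lies in no imprimitive maximal subgroup of $S_n$; its unique fixed point means the only intransitive maximal overgroup is the point stabiliser $S_{n-1}$; and by the classification of primitive permutation groups containing a cycle with a single fixed point, the only primitive overgroups of $x$ for large $n$ are $S_n$ and $A_n$, together with $\PGL_2(p)$ when $n - 1 = p$ is prime. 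Hence $\mu(S_n) \leq 3$ for $n$ large, with the finitely many small $n$ and the $A_6$-extensions verified directly. For sporadic $T$ there are only finitely many almost simple extensions, and in each case a short \textsc{Gap} computation using the \textsc{Atlas} character tables produces an element contained in at most three maximal subgroups.

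\textbf{Part (iii).} I claim $\mu(\PGL_2(p)) = 1$ for every prime $p \geq 5$, yielding infinitely many examples. Let $x \in G := \PGL_2(p)$ be the image of a generator of a non-split maximal torus of $\GL_2(p)$, so $x$ has order $p+1$. Since the maximum element order in $\PSL_2(p)$ is $\max\{p, (p-1)/2, (p+1)/2\} < p+1$, we have $x \notin \PSL_2(p)$ and $G = \<\PSL_2(p), x\>$. By Dickson's classification, the maximal subgroups of $G$ are $\PSL_2(p)$, a Borel of order $p(p-1)$, the dihedral groups $D_{2(p \pm 1)}$, and (for certain $p$) $S_4$. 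Since $\gcd(p+1, p(p-1)) \leq 2$, $p + 1 \ndiv 2(p-1)$ for $p \geq 5$, and $S_4$ has no element of order exceeding $4$, the element $x$ lies in exactly one maximal subgroup, $N_G(\<x\>) \cong D_{2(p+1)}$. Thus $\mu(G) = 1$.

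\textbf{Part (ii).} This is the main obstacle. Since $\mu(G) \leq 7$ for simple $G$, the required sequence must consist of properly almost simple groups, and the natural source is the exceptional family of Theorem~\ref{thm:spread}(iii)(b): take $G_m = \<T_m, \gamma\>$ where $T_m = \PSL_{2m+1}(q)$ for fixed $q$ and $\gamma$ is the graph automorphism. The key tool is Theorem~\ref{thm:shintani_subgroups} applied with $Y$ a subspace stabiliser in the ambient algebraic group: the Shintani map translates the count of maximal subspace-stabiliser overgroups of $g \in T_m \gamma$ into a count of eigenspaces of a corresponding element of a classical group on the dual side. Combined with the geometric content of Remark~\ref{rem:spread} — every element of $T_m \gamma$ fixes at least one natural subspace — this would show that the number of maximal overgroups of $g \in T_m \gamma$ in $G_m$ grows linearly in $m$. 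The principal challenge is guaranteeing uniformity over \emph{every} element of $G_m$: elements of $T_m$ itself lie in the maximal subgroup $T_m$, and to force $\mu(G_m) \to \infty$ one must exhibit additional maximal overgroups, for instance via the normalisers in $G_m$ of the geometric subgroups of $T_m$ extended by $\gamma$, or by passing to a sequence in which $T_m$ is no longer maximal in $G_m$.
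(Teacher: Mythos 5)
The main issue is part~(ii), which is the heart of the theorem and which you yourself flag as incomplete. The paper's proof (Proposition~\ref{prop:subgroups_unbounded}) goes in a direction orthogonal to your proposal: it fixes the Lie rank and lets the \emph{field degree} $f$ acquire many prime factors. Specifically, given $k$, one uses the number-theoretic Lemma~\ref{lem:subgroups_unbounded} to choose distinct primes $r_1,\ldots,r_k$ with certain non-divisibility conditions, sets $f = r_1\cdots r_k$, and takes $G = \<\PSL_2(2^f), \wp\>$. For any $s = x\wp^i$ with $i \mid f$, Lemma~\ref{lem:shintani_subfield}(ii) (a Shintani-descent consequence) shows $s$ lies in a subfield subgroup of type $\PGL_2(q^{1/r})$ for each prime $r \mid f/i$, while $s \in \<T,\wp^r\>$ for each prime $r \mid i$ — so $s$ has at least $k$ maximal overgroups. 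Your proposed approach of taking $G_m = T_m.\<\gamma\>$ with $T_m = \PSL_{2m+1}(q)$ and $m \to \infty$ cannot succeed: here $T_m$ is maximal of prime index, so an element $s \in T_m$ realising $\mu(T_m) \le 7$ has at most $1 + \mu(T_m) + (\text{novelties})$ maximal overgroups in $G_m$, which is bounded. The geometric phenomenon behind Remark~\ref{rem:spread} (every element of $T_m\gamma$ fixing a $1$-space) concerns \emph{spread}, not $\mu$, and does not force many maximal overgroups onto a single element. Your closing suggestion to pass to a sequence where $T_m$ is no longer maximal is the correct intuition, but it is precisely here that the argument has to become quantitative and none of that is supplied.

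Part~(iii) is correct and genuinely simpler than the paper's construction. You take $\PGL_2(p)$ for primes $p \geq 7$ with $x$ of order $p+1$, and a direct check against Dickson's subgroup list shows $\M(G,x) = \{D_{2(p+1)}\}$. This requires no Shintani machinery at all, unlike the paper's example $\<\Om^+_{2m}(2^f),\wp\>$ in Proposition~\ref{prop:subgroups_unique}, which is chosen partly to illustrate Theorem~\ref{thm:shintani_subgroups}; your route is a legitimate and more elementary proof of this part.

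Part~(i) also takes a different route for $S_n$. The paper uses an element of cycle shape $[m,m+1]$ when $n = 2m+1$ and $[m-k,m+k]$ when $n = 2m$, so that a power is a cycle of length $< n/2$ and Marggraf's theorem immediately rules out all proper primitive overgroups. Your $(n-1)$-cycle instead requires the CFSG-based classification of primitive groups of degree $n$ containing an $(n-1)$-cycle, and your statement of that classification is not quite right: besides $\PGL_2(p)$ with $n = p+1$ one also obtains affine groups $\AGaL_d(q)$ with $n = q^d$ (via Singer cycles) and a handful of sporadic $2$-transitive cases. The eventual bound $\mu(S_n) \leq 3$ probably survives a careful accounting, but the paper's choice of element avoids the whole issue. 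The sporadic and $A_6$-extension cases are indeed handled computationally in the paper (Proposition~\ref{prop:subgroups_almost_simple}(iii)), as you suggest.
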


\begin{shremark}
Let us comment on Theorem~\ref{thm:subgroups}.
\begin{enumerate}
\item Proposition~\ref{prop:subgroups_almost_simple} gives the value of $\mu(G)$ when $\soc(G)$ is alternating or sporadic, so we determine when these groups contain an element with a unique maximal overgroup.
\item In Proposition~\ref{prop:subgroups_unbounded}, we fix $k \geq 1$ and give an almost simple group $G$ with socle $\PSL_2(2^f)$ such that $|\mathcal{M}(G,g)| \geq k$ for each $g \in G$ (we choose $f$ so it has $c$ distinct prime factors). Therefore, $\mu(G)$ is even unbounded for almost simple groups $G$ of bounded rank.
\item In Proposition~\ref{prop:subgroups_unique}, we will see that for all $m \geq 8$, there exists $f > 1$ such that $\Om^+_{2m}(2^f).f$ has an element with a unique maximal overgroup (of type $\O^+_{2k}(2^f) \times \O^+_{2m-2k}(2^f)$).
\end{enumerate}
\end{shremark}

\textbf{\emph{Organisation.}}\ Section~\ref{s:shintani} introduces a general version of Shintani descent and features a proof of Theorem~\ref{thm:shintani_subgroups}. In Section~\ref{s:setup}, we apply Shintani descent systematically to all almost simple groups of Lie type, and we use this to study their maximal subgroups in Section~\ref{s:subgroups}, thus proving Theorem~\ref{thm:subgroups}. Section~\ref{s:spread} sees the proof of Theorem~\ref{thm:spread}.

\textbf{\emph{Notation.}}\ Our notation for algebraic groups and groups of Lie type follows \cite{ref:GorensteinLyonsSolomon98,ref:KleidmanLiebeck}. Let us highlight that $\SL^+_n(q) = \SL_n(q)$ and $\SL^-_n(q) = \SU_n(q)$, and that $\Omega_n(\FF_p)$ is the connected component $\O_n(\FF_p)^\circ$. For a group $G$ and a prime $p$, we write $O^{p'}(G)$ for the subgroup generated by the $p$-elements of $G$. We write $(a,b)$ for the greatest common divisor of $a$ and $b$.

\textbf{\emph{Acknowledgements.}}\ The author is grateful to the Isaac Newton Institute for Mathematical Sciences for support and hospitality during the programme \emph{Groups, Representations and Applications: New perspectives}, when some work on this paper was undertaken. The author thanks Jean Michel for helpful comments he made on the author's talk during that programme. The author also thanks Tim Burness, Gunter Malle and an anonymous referee for useful feedback on previous versions of this paper. This work was supported by EPSRC grant number EP/R014604/1.

\section{Shintani descent} \label{s:shintani}

\subsection{Introduction} \label{ss:shintani}    

Let us introduce the general version of Shintani descent that is at the heart of the remainder of the paper. Throughout Section~\ref{s:shintani}, let $p$ be prime and $X$ be a linear algebraic group over $\FF_p$, which from now on we simply refer to as an \emph{algebraic group}.

We fix some notation. For a Steinberg endomorphism $\s$ of $X$, write 
$
X_{\s} = \{ x \in X \mid x^\s = x \}.
$
For an automorphism $\a$ of a group $G$, we say that $g,h \in G$ are \emph{$\a$-conjugate} if $g\a$ and $h\a$ are conjugate in the semidirect product $G{:}\<\a\>$, and we call the $\a$-conjugacy classes \emph{$\a$-classes}. 

In our first main theorem, we introduce a general version of Shintani descent. In the proof we argue as Deshpande does in \cite{ref:Deshpande16}, but our setup is slightly more general since we define a Shintani map for an arbitrary pair of Steinberg endomorphisms, which need not commute.

\begin{theorem} \label{thm:shintani}
Let $X$ be an algebraic group and let $\s_1$ and $\s_2$ be Steinberg endomorphisms of $X$. Let $X_i$ be a set of $\s_i$-class representatives of $X$. Then the \emph{Shintani map} of $(X,\s_1,\s_2)$
\[
F\:\bigcup_{s \in X_1} \{ g^{X_{s\s_1}} \mid g \in (X\s_2)_{s\s_1} \} \to \bigcup_{t \in X_2} \{ h^{X_{t\s_2}} \mid h \in (X\s_1)_{t\s_2} \}
\]
defined as $F(g^{X_{s\s_1}}) = h^{X_{t\s_2}}$ if and only if $(g,s\s_1)^X = (t\s_2,h)^X$ is a well-defined bijection. Moreover, if $F(g^{X_{s\s_1}}) = h^{X_{t\s_2}}$, then $C_{X_{s\s_1}}(g) \cong C_{X_{t\s_2}}(h)$.
\end{theorem}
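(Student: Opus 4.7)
The plan is to identify both the domain and codomain of $F$ with a common combinatorial object: the set $P$ of $X$-orbits, under simultaneous conjugation, of ordered pairs $(a,b)$ with $a \in X\s_2$, $b \in X\s_1$, and $ab = ba$. To make sense of this in the generality where $\s_1$ and $\s_2$ need not commute, I would first fix an ambient group containing $X$ as a normal subgroup in which $\s_1$ and $\s_2$ act on $X$ as their prescribed endomorphisms; one natural choice is $X \rtimes A$, where $A$ is the (abstract) subgroup of $\Aut(X)$ generated by $\s_1$ and $\s_2$. All commutation conditions will be interpreted inside this fixed group.

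Next, I would define a map $\Phi_1$ from the domain onto $P$ by sending $g^{X_{s\s_1}}$ (indexed by $s \in X_1$ with $g \in (X\s_2)_{s\s_1}$) to the orbit $(g, s\s_1)^X$. Well-definedness is immediate, since every $y \in X_{s\s_1}$ centralises $s\s_1$, so $(g, s\s_1)^y = (g^y, s\s_1)$. Surjectivity: given $(a,b) \in P$, the definition of $X_1$ provides a unique $s \in X_1$ with $b$ $X$-conjugate to $s\s_1$, and pulling back $(a,b)$ along a conjugating element lands in the image of $\Phi_1$. Injectivity: if $(g, s\s_1)$ and $(g', s'\s_1)$ lie in a common $X$-orbit, then $s\s_1$ and $s'\s_1$ are $X$-conjugate, so $s = s'$ by uniqueness of the $X_1$-representatives; the conjugating element then lies in $X_{s\s_1}$ and identifies the two first-coordinate classes. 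The analogous argument yields a bijection $\Phi_2$ from the codomain onto $P$, and then $F := \Phi_2^{-1} \circ \Phi_1$ is the required bijection, satisfying $F(g^{X_{s\s_1}}) = h^{X_{t\s_2}}$ precisely when $(g, s\s_1)^X = (t\s_2, h)^X$, which is the prescribed characterisation.

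The centraliser claim then falls out by stabiliser bookkeeping: the $X$-stabiliser of the pair $(g, s\s_1)$ under simultaneous conjugation is $C_X(g) \cap C_X(s\s_1) = C_{X_{s\s_1}}(g)$, and the $X$-stabiliser of $(t\s_2, h)$ is $C_{X_{t\s_2}}(h)$; since these pairs share an $X$-orbit, their stabilisers are $X$-conjugate and hence isomorphic. I do not foresee a serious obstruction here: the argument is essentially orbit bookkeeping and needs no input from the Lang--Steinberg theorem. The only point requiring genuine care is that the arguments above use only the fact that $\s_1$ and $\s_2$ each normalise $X$, and never any relation between them, which is precisely what buys us the additional generality of allowing noncommuting Steinberg endomorphisms compared with the classical formulation.
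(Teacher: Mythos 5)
Your proof follows the same route as the paper's: both identify the domain and codomain of $F$ with the set of $X$-orbits of commuting pairs in $X\s_2 \times X\s_1$ (you call it $P$, the paper calls it $R/X$), and both extract the centraliser statement from a stabiliser computation, so the two arguments are structurally identical. Where you part ways is in asserting that the Lang--Steinberg theorem plays no role. Your surjectivity and injectivity steps both pass from ``$b$ lies in the $\s_1$-class of $s$'' to ``$b$ is $X$-conjugate to $s\s_1$'' as though this were immediate from the definition of $X_1$; but the paper defines a $\s_1$-class via conjugacy in the larger group $X{:}\<\s_1\>$, which a priori also allows conjugation by powers of $\s_1$ and not just by elements of $X$, and it is exactly to close this gap that the paper invokes Lang--Steinberg (after detouring through a representative in $X^\circ s\s_1$). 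Your instinct that Lang--Steinberg is dispensable for this is nonetheless sound, because the coincidence of the two conjugacy relations on the coset $X\s_1$ is an elementary group-theoretic fact: if $N \triangleleft G$ and $h = ng \in Ng$, then with $m = g^{-1}n^{-1}g \in N$ one checks $h^m = (g^{-1}ng)(ng)(g^{-1}n^{-1}g) = g^{-1}ngg = h^g$, so conjugation by $g$ never leaves an $N$-orbit on $Ng$, and hence $N$-orbits and $\<N,g\>$-orbits on $Ng$ agree. That observation is what does the real work in your proof, and it should be stated rather than folded silently into ``the definition of $X_1$''. With that one-line lemma supplied, your argument is correct and gives a modestly more elementary path to the bijection than the paper's, avoiding the appeal to Lang--Steinberg that the paper makes at this point.
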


\begin{proof}
Write $R = \{ (g,h) \in X\s_2 \times X\s_1 \mid [g,h]=1 \}$ and let $R/X$ be the set of orbits of the conjugation action of $X$ on $R$. If $h \in X\s_1$, then $h$ is $X$-conjugate to $xs\s_1$ for some $x \in X^\circ$ and $s \in X_1$, and by the Lang--Steinberg theorem, $xs\s_1$ is $X^\circ$-conjugate to $s\s_1$, so
\[
R/X = \bigcup_{s \in X_1} \{ (g, s\s_1)^X \mid g \in (X\s_2)_{s\s_1} \}.
\]
Observe that the distinct elements of $\{ (g, s\s_1)^X \mid g \in (X\s_2)_{s\s_1} \}$ correspond to the distinct $X_{s\s_1}$-class representatives $g$ in $(X\s_2)_{s\s_1}$. Similarly, we deduce that
\[
R/X = \bigcup_{t \in X_2} \{ (t\s_2, h)^X \mid h \in (X\s_1)_{t\s_2} \},
\]
with distinct elements of $\{ (t\s_2, h)^X \mid h \in (X\s_1)_{t\s_2} \}$ corresponding to distinct $X_{t\s_2}$-class representatives $h$ in $(X\s_1)_{t\s_2}$. Therefore,
\[
\bigcup_{s \in X_1} \{ (g, s\s_1)^X \mid g \in (X\s_2)_{s\s_1} \} = R/X = \bigcup_{t \in X_2} \{ (t\s_2, h)^X \mid h \in (X\s_1)_{t\s_2} \},
\]
and we obtain the bijection $F$ in the statement.

If we fix $g \in (X^\circ t\s_2)_{s\s_1}$ and $h \in (X^\circ s\s_1)_{t\s_2}$ such that $F(g^{X_{s\s_1}}) = h^{X_{t\s_2}}$, then we may write $(g,s\s_1) = (t\s_2,h)^a$ for some $a \in X$, whence
\[
C_{X_{s\s_1}}(g) = \{ x \in X \mid (g,s\s_1)^x = (g,s\s_1) \} = \{ x \in X \mid (t\s_2,h)^x = (t\s_2,h) \}^a = C_{X_{t\s_2}}(h)^a.
\]
This completes the proof.
\end{proof}

We can give an explicit definition of the Shintani map.

\begin{lemma} \label{lem:shintani_explicit}
Let $F$ be the Shintani map of $(X,\s_1,\s_2)$, and let $s,t \in X$ with $[s\s_1,t\s_2] = 1$. If $F(g^{X_{s\s_1}}) = h^{X_{t\s_2}}$, then there exists $a \in X^\circ$ such that $g = aa^{-(t\s_2)^{-1}}t\s_2$ and $h = a^{-1}a^{(s\s_1)^{-1}}s\s_1$.
\end{lemma}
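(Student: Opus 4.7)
The strategy combines the description of the Shintani correspondence in Theorem~\ref{thm:shintani} with the Lang--Steinberg theorem.

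By the proof of Theorem~\ref{thm:shintani}, the hypothesis $F(g^{X_{s\s_1}}) = h^{X_{t\s_2}}$ is equivalent to the commuting pairs $(g,s\s_1)$ and $(t\s_2,h)$ lying in a common $X$-orbit. Picking $a \in X$ with $(g,s\s_1) = (t\s_2,h)^{a^{-1}}$ yields $g = a(t\s_2)a^{-1}$ and $h = a^{-1}(s\s_1)a$. Rewriting via the identities $a(t\s_2)a^{-1} = a\cdot (t\s_2)a^{-1}(t\s_2)^{-1} \cdot t\s_2 = a \cdot a^{-(t\s_2)^{-1}}\cdot t\s_2$ and $a^{-1}(s\s_1)a = a^{-1}\cdot(s\s_1)a(s\s_1)^{-1}\cdot s\s_1 = a^{-1}\cdot a^{(s\s_1)^{-1}}\cdot s\s_1$ recovers the claimed formulas, valid for some $a \in X$.

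To force $a \in X^\circ$, I would invoke Lang--Steinberg. Since $X^\circ$ is characteristic in $X$, it is normal in $X \rtimes \<\s_1,\s_2\>$, so every conjugation preserves the cosets of $X^\circ$ in $X\<\s_2\>$; hence $g$ and $t\s_2$ lie in a common coset and $g(t\s_2)^{-1} \in X^\circ$. Because conjugation by $t\s_2$ restricts to a Steinberg endomorphism of the connected group $X^\circ$ (the composition of the inner automorphism by $t$ with $\s_2$), the Lang map $x \mapsto x\cdot x^{-(t\s_2)^{-1}}$ is surjective on $X^\circ$, so it produces $a \in X^\circ$ with $a \cdot a^{-(t\s_2)^{-1}} = g(t\s_2)^{-1}$, equivalently $g = a(t\s_2)a^{-1}$.

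Setting $h' := a^{-1}(s\s_1)a$ then delivers the second formula, and since $(t\s_2,h') = (g,s\s_1)^a$ is $X^\circ$-conjugate to $(g,s\s_1)$, the pairs $(t\s_2,h')$ and $(t\s_2,h)$ lie in the same $X$-orbit; by the well-definedness of $F$ in Theorem~\ref{thm:shintani}, $h$ and $h'$ are therefore $X_{t\s_2}$-conjugate. The remaining step is to replace $a$ by $ac^{-1}$ for some $c \in C_{X^\circ}(t\s_2)$---a change that preserves the first formula while conjugating $h'$ by $c$---so as to arrange $h' = h$ exactly. The main obstacle I anticipate is this last alignment: arguing that the $X_{t\s_2}$-conjugate $h$ of $h'$ can in fact be achieved through $C_{X^\circ}(t\s_2)$, which should follow from a supplementary Lang--Steinberg argument inside the centraliser (or, alternatively, by interpreting the Shintani correspondence at the level of $X^\circ$-orbits of commuting pairs).
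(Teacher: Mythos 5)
Your derivation of the two displayed formulas is correct and matches the intended argument exactly (you are arguably more careful than the published proof, which writes $(g,s\s_1) = (t\s_2,h)^{a}$ even though the stated formulas come from $(g,s\s_1)^{a} = (t\s_2,h)$, i.e.\ the convention you adopt). The paper's proof of the lemma is a one-liner: it asserts that $(g,s\s_1)$ and $(t\s_2,h)$ are conjugate by some $a \in X^\circ$ and unwinds the two coordinate equalities; your first two paragraphs recover this.

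Where you diverge is in trying to justify $a \in X^\circ$, and this is where the gap lies. The step ``every conjugation preserves the cosets of $X^\circ$ in $X\<\s_2\>$; hence $g$ and $t\s_2$ lie in a common coset'' is a non sequitur: normality of $X^\circ$ says conjugation \emph{permutes} the $X^\circ$-cosets, so $a(X^\circ t\s_2)a^{-1}$ is again a coset, but it need not equal $X^\circ t\s_2$ unless the image of $a$ commutes with the image of $t\s_2$ in the component group $X/X^\circ$. Establishing $g \in X^\circ t\s_2$ is precisely the nontrivial content of the claim $a \in X^\circ$, not a free consequence of normality. You then correctly flag the remaining alignment of $h$ with $h' := a^{-1}(s\s_1)a$ as an unresolved obstacle, and that too is a genuine gap (the $X_{t\s_2}$-conjugacy you obtain need not be realised inside $X^\circ_{t\s_2}$). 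To be fair, the paper's one-line proof asserts $a \in X^\circ$ without any justification either, and the proof of Theorem~\ref{thm:shintani} only produces $a \in X$; but the lemma is used in the paper only when $X$ is connected (Lemmas~\ref{lem:shintani_order} and~\ref{lem:shintani_power} and everything from Section~\ref{ss:shintani_subgroups} onward assume this), where $X^\circ = X$ and the issue evaporates. So either restrict the statement to connected $X$, which suffices for all later applications, or supply an actual argument for the two steps you and I have just identified as missing.
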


\begin{proof} 
If $F(g^{X_{s\s_1}}) = h^{X_{t\s_2}}$, then $(g,s\s_1) = (t\s_2,h)^a$ for some $a \in X^\circ$, which implies that $g = aa^{-(t\s_2)^{-1}}t\s_2$ and $h = a^{-1}a^{(s\s_1)^{-1}}s\s_1$, as required.
\end{proof}

\begin{remark} \label{rem:shintani_tilde}
For our applications to finite groups, we will be interested in not the infinite order Steinberg endomorphisms $\s_1$ and $\s_2$, but their finite order restrictions to finite fixed point subgroups; however, this makes no substantive change to the claims made in this section. In particular, if $X$ is connected and $[\s_1,\s_2]=1$, then for the restrictions $\ws_1 = \s_1|_{X_{\s_2}}$ and $\ws_2 = \s_2|_{X_{\s_1}}$, the Shintani map $F$ of $(X,\s_1,\s_2)$ gives a well-defined bijection
\begin{gather*}
\{ (x\ws_2)^{X_{\s_1}} \mid x \in X_{\s_1} \} \to \{ (y\ws_1)^{X_{\s_2}} \mid y \in X_{\s_2} \} \\[5pt]
(x\ws_2)^{X_{\s_1}} \mapsto (y\ws_1)^{X_{\s_2}} \iff (x\s_2,\s_1)^X = (\s_2,y\s_1)^X. 
\end{gather*}
We will harmlessly identify this bijection with $F$, but we will adopt the notation $\ws_1 = \s_1|_{X_{\s_2}}$ and $\ws_2 = \s_2|_{X_{\s_1}}$ when convenient.
\end{remark}

\begin{remark} \label{rem:shintani_notation}
We often abuse notation and write the Shintani map of $(X,\s_1,\s_2)$ as
\[
\text{$F\:\bigcup_{s \in X_1} (X\s_2)_{s\s_1} \to \bigcup_{t \in X_2} (X\s_1)_{t\s_2}$ \ where \ $(x\s_2,s\s_1)^X = (t\s_2,F(x\s_2))^X$}.
\]
In particular, if $X$ is connected and $[\s_1,\s_2]=1$, we simply write $F\:X_{\s_2}\s_1 \to X_{\s_1}\s_2$.
\end{remark}

\begin{example} \label{ex:shintani}
Let $X = {\GL_m} \wr S_2 \leq \GL_{2m}$. Let $\p\: X \to X$ be the standard Frobenius endomorphism of $X$ defined as $(x_{ij}) \mapsto (x_{ij}^p)$, and let $\g\: X \to X$ be the restriction of the graph automorphism of $\GL_{2m}$ defined as $x \mapsto x^{-\tr}$. Observe that $\p$ and $\g$ commute. 

Write $X = X^\circ{:}\<s\>$, where $s \in \GL_{2m}$ is an involution interchanging the two factors of $X^\circ = \GL_m \times \GL_m$. We choose $s$ such that it commutes with $\p$ and $\g$. Here, $X_1 = X_2 = \{ 1, s \}$.

Write $q=p^f$ and let $F$ be the Shintani map of $(X,\g\p^f,\p)$. In the notation of Remark~\ref{rem:shintani_notation},
\[
F\: (\GU_m(q) \wr S_2)\p \cup (\GL_m(q^2){:}C_2)\p \to (\GL_m(p) \wr S_2)\g\p^f \cup (\GL_m(p^2){:}C_2)\g\p^f,
\]
where, for example, $\GL_m(q^2){:}C_2$-classes in $\GL_m(q^2)\p = X^\circ_{s\g\p^f}\p$ correspond to $\GL_m(p) \wr S_2$-classes in $\GL_m(p)^2 s\g\p^f= X^\circ_{\p}s\g\p^f$ via a centraliser order preserving bijection.
\end{example}

\subsection{Shintani descent and subgroups} \label{ss:shintani_subgroups}

For the remainder of Section~\ref{s:shintani}, let $X$ be a connected algebraic group and let $\s_1$ and $\s_2$ be commuting Steinberg endomorphisms of $X$. For $\{ i,j \} = \{ 1,2 \}$, write $\ws_i = \s_i|_{X_{\s_j}}$ and assume that $\<\ws_i\> \cap X_{\s_j} = 1$. Let $F\:X_{\s_1}\ws_2 \to X_{\s_2}\ws_1$ be the Shintani map of $(X,\s_1,\s_2)$. 

Let us fix some notation.

\begin{notation} \label{not:shintani_subgroups}
Let $Y$ be a closed $\s_i$-stable subgroup of $X$.
\begin{enumerate}
\item Fix $S_i = S_i(Y)$ as a set of $\s_i$-class representatives of $N_X(Y^\circ)$. 
\item For each $s \in X$, by the Lang--Steinberg theorem (see \cite[Theorem~2.1.1]{ref:GorensteinLyonsSolomon98} for example), fix $s_i \in X$ such that $[s_i^{-1},\s_i^{-1}] = s$.
\end{enumerate}
\end{notation}

\begin{remark} \label{rem:shintani_subgroups}
Note that the set $R_i(Y)$ defined before Theorem~\ref{thm:shintani_subgroups} is $\{ s_i \mid s \in S_i(Y) \}$.
\end{remark}

We will now give a strong version of Theorem~\ref{thm:shintani_subgroups}, which is our main result on Shintani descent and it captures how overgroups are preserved under the Shintani map.

\begin{theorem} \label{thm:shintani_subgroups_strong}
Let $Y \leq X$ be a closed $\<\s_1,\s_2\>$-stable subgroup satisfying $N_{X_{s\s_i}}(Y^\circ_{s\s_i}) = N_X(Y^\circ)_{s\s_i}$ for all $s \in N_X(Y^\circ)$. Let $g \in X_{\s_1}\ws_2$.
\begin{enumerate}
\item The total number of $\<X_{\s_1},\ws_2\>$-conjugates of $(\<Y,\ws_2\>^{s_1})_{\s_1}$, as $s$ ranges across $S_1(Y)$, that contain $g$ equals the total number of $\<X_{\s_2},\ws_1\>$-conjugates of $(\<Y,\ws_1\>^{s_2})_{\s_2}$, as $s$ ranges across $S_2(Y)$, that contain $F(g)$.
\item For $s,t \in N_X(Y^\circ)$ satisfying $[s\s_1,t\s_2]\!=\!1$, the number of $\< X_{\s_1}, \ws_2 \>$-conjugates of $((Y^\circ t\ws_2)^{s_1})_{\s_1}$ that contain $g$ equals the number of $\< X_{\s_2}, \ws_1\>$-conjugates of $((Y^\circ s\ws_1)^{t_2})_{\s_2}$ that contain $F(g)$.
\end{enumerate}
\end{theorem}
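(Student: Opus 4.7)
The plan is to build the desired overgroup bijections from three applications of the basic Shintani correspondence (Theorem~\ref{thm:shintani}): one on $X$ itself (giving $F$), one on the connected subgroup $Y^\circ$, and one on the possibly disconnected normalizer $N_X(Y^\circ)$. The glue between these is the explicit formula in Lemma~\ref{lem:shintani_explicit}: by choosing the \emph{same} Lang--Steinberg element $a \in X^\circ$ for each map, the three correspondences become automatically compatible. I would prove (ii) first, since it is the connected building block, and then deduce (i) by summing the (ii)-data over the component representatives parameterised by $S_1(Y)$ and $S_2(Y)$.

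For (ii), fix $s,t \in N_X(Y^\circ)$ with $[s\s_1, t\s_2] = 1$. The restrictions $s\s_1|_{Y^\circ}$ and $t\s_2|_{Y^\circ}$ are commuting Steinberg endomorphisms of the connected group $Y^\circ$, so Theorem~\ref{thm:shintani} yields a Shintani map $F_Y$ between $(Y^\circ)_{s\s_1}$-classes in $(Y^\circ t\s_2)_{s\s_1}$ and $(Y^\circ)_{t\s_2}$-classes in $(Y^\circ s\s_1)_{t\s_2}$. Using Lemma~\ref{lem:shintani_explicit} I would write $g = a(t\s_2)a^{-1}$ and $F(g) = h = a^{-1}(s\s_1)a$ with a common $a \in X^\circ$, and then check that an $\<X_{\s_1},\ws_2\>$-conjugate of $((Y^\circ t\ws_2)^{s_1})_{\s_1}$ contains $g$ precisely when $a$ may be chosen in the coset $Y^\circ s_1$, and symmetrically that an $\<X_{\s_2},\ws_1\>$-conjugate of $((Y^\circ s\ws_1)^{t_2})_{\s_2}$ contains $h$ precisely when $a^{-1}$ may be chosen in $Y^\circ t_2$. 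Since both conditions are witnessed by the same $a$, they are equivalent. The hypothesis $N_{X_{s\s_i}}(Y^\circ_{s\s_i}) = N_X(Y^\circ)_{s\s_i}$ then identifies fusion of these overgroups under $\<X_{\s_1},\ws_2\>$-conjugacy with fusion under $N_X(Y^\circ)_{s\s_1}$, making the overgroup counts match exactly.

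For (i), I would apply Theorem~\ref{thm:shintani} a third time, now to the $\<\s_1,\s_2\>$-stable subgroup $N_X(Y^\circ)$ (which is $\<\s_1,\s_2\>$-stable because $Y$ is), obtaining a Shintani bijection pairing representatives $s \in S_1(Y)$ with representatives $t \in S_2(Y)$. The overgroups listed on each side of (i) decompose into subfamilies labelled by $S_1(Y)$ (resp. $S_2(Y)$), each subfamily consisting of exactly the overgroups analysed in (ii) for one specific pair $(s,t)$. Summing the equality from (ii) over all matched pairs then produces (i), the shared element $a$ from Lemma~\ref{lem:shintani_explicit} ensuring that the $N_X(Y^\circ)$-level and $Y^\circ$-level correspondences fit together without double-counting.

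The principal obstacle will be the bookkeeping at the boundary between connected and disconnected components: one must verify that distinct representatives $s \in S_1(Y)$ give rise to distinct $\<X_{\s_1},\ws_2\>$-conjugacy classes of overgroups, and that each overgroup containing $g$ is counted with the correct multiplicity on each side. The hypothesis $N_{X_{s\s_i}}(Y^\circ_{s\s_i}) = N_X(Y^\circ)_{s\s_i}$ is precisely what rules out elements of $X_{s\s_i}$ that normalize the finite group $Y^\circ_{s\s_i}$ without normalizing the algebraic group $Y^\circ$; without it, extra fusions could occur asymmetrically on the two sides of $F$, destroying the correspondence. Verifying this symmetric control while tracking multiplicities is the technical heart of the argument.
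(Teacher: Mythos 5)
Your high-level plan matches the paper's: prove part~(ii) first, then deduce~(i) by summing over component representatives, and the observation that the bridge between the two sides involves Shintani maps on $X$, on $Y^\circ$ and (for part~(i)) on $N_X(Y^\circ)$ is exactly right. However, the mechanism you propose for part~(ii) contains a genuine confusion, and the sketch omits precisely the quantitative content that makes the theorem nontrivial.

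First, the ``common $a$'' idea does not work as stated. The ambient Shintani map $F$ is for the connected group $X$, so Lemma~\ref{lem:shintani_explicit} for $F$ reads $g = aa^{-\s_2^{-1}}\s_2$ and $F(g) = a^{-1}a^{\s_1^{-1}}\s_1$ --- there is no $s,t$ here because $X$ is connected. The coset $Y^\circ t\ws_2$ and the representative $s$ belong to the \emph{subgroup} Shintani map $E_{(s,t)}$ on $Y^\circ$, which carries its own Lang--Steinberg element in $Y^\circ$, a priori unrelated to $a$. Your formula ``$g = a(t\s_2)a^{-1}$ with $a\in X^\circ$'' conflates these two levels, and the claim that $g$ lies in a $\<X_{\s_1},\ws_2\>$-conjugate of $((Y^\circ t\ws_2)^{s_1})_{\s_1}$ if and only if ``$a$ may be chosen in $Y^\circ s_1$'' is neither precisely meaningful nor substantiated. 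The paper's proof shows that the actual link between the two levels is through $X$-conjugacy of commuting pairs: picking $Y^\circ_{s\s_1}$-class representatives $y_1,\dots,y_k$ in $Y^\circ_{s\s_1}t\ws_2$, one proves $(y_i,s\s_1)^X = (t\s_2,E_{(s,t)}(y_i))^X$ and $(g^{s_1^{-1}},s\s_1)^X = (t\s_2,F(g)^{t_2^{-1}})^X$, so membership in $(g^{s_1^{-1}})^{X_{s\s_1}}$ corresponds exactly to membership of $E_{(s,t)}(y_i)$ in $(F(g)^{t_2^{-1}})^{X_{t\s_2}}$. This is then fed into the standard ``number of conjugates containing an element'' formula, and the proof becomes a calculation with $|g^{X_{\s_1}} \cap H|$, centraliser orders (using the ``moreover'' clause of Theorem~\ref{thm:shintani}), and normaliser indices. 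Your sketch gives no analogue of this counting, and a pure bijection argument would still need to control class sizes and the normaliser of the coset.

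Second, you mischaracterise where the hypothesis $N_{X_{s\s_i}}(Y^\circ_{s\s_i}) = N_X(Y^\circ)_{s\s_i}$ enters. It is not used to exclude ``asymmetric fusion''; it is used at the last step to convert the ratio
\[
\frac{m_{(s,t)}}{n_{(t,s)}} = \frac{|N_{X_{t\s_2}}(Y^\circ_{t\s_2}s\ws_1):Y^\circ_{t\s_2}|}{|N_{X_{s\s_1}}(Y^\circ_{s\s_1}t\ws_2):Y^\circ_{s\s_1}|}
\]
into a ratio of centraliser orders inside the (fixed subgroup of the) component group $N_X(Y^\circ)/Y^\circ$, where the symmetry $|C_{G_{t\s_2}}(s\s_1)| = |C_{G_{s\s_1}}(t\s_2)|$ is manifest. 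Without the hypothesis, the numerator and denominator normalisers need not have this component-group description, and the final cancellation fails.

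Third, your deduction of (i) from (ii) omits a necessary compatibility: the paper needs the convention $(s^x)_i = s_i^x$ so that the conjugates $((Y^\circ s\ws_1)^{t_2})_{\s_2}$ are well-behaved under replacing $(s\s_1,t\s_2)$ by $(s\s_1,t\s_2)^x$, which is what lets the two Fubini sums over $\bigcup_{s\in S_1}\{t\in S_{2,s}\}$ and $\bigcup_{t\in S_2}\{s\in S_{1,t}\}$ be interchanged. Invoking Theorem~\ref{thm:shintani} for $N_X(Y^\circ)$ is the right intuition, but the bookkeeping with the auxiliary sets $S_{i,t}$ and the $n_{(t,s)}$-invariance is the real work, and the sketch does not supply it.
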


\begin{proof}
For $s,t \in Y$ satisfying $[s\s_1,t\s_2]=1$, let $m_{(s,t)}$ be the number of $\<X_{\s_1},\ws_2\>$-conjugates of the coset $((Y^\circ t\ws_2)^{s_1})_{\s_1} = (Y^\circ_{s\s_1}t\ws_2)^{s_1}$ that contain $g$ and let $n_{(t,s)}$ be the number of $\<X_{\s_2},\ws_1\>$-conjugates of the coset $((Y^\circ s\ws_1)^{t_2})_{\s_2} = (Y^\circ_{t\s_2}s\ws_1)^{t_2}$ that contain $F(g)$. 

Let us show that (i) is a consequence of (ii). Observe that $Y^\circ s\s_i$ for $s \in S_i$ are representatives of the $N_X(Y^\circ)$-classes in $N_X(Y^\circ)\s_i/Y^\circ$. For $t \in Y$, fix $S_{i,t}$ such that $Y^\circ s\s_i$ for $s \in S_{i,t}$ are representatives of the $C_{N_X(Y^\circ)/Y^\circ}(Y^\circ t\s_j)$-classes in $C_{N_X(Y^\circ)\s_i/Y^\circ}(Y^\circ t\s_j)$ where $\{ i,j \} = \{1,2\}$.

For $s \in N_X(Y)$, let $m_s$ be the number of $\<X_{\s_1},\ws_2\>$-conjugates of $(\<Y,\ws_2\>^{s_1})_{\s_1} = (\<Y,\ws_2\>_{s\s_1})^{s_1}$ that contain $g$, and for $t \in Y$, let $n_t$ be the number of $\<X_{\s_2},\ws_1\>$-conjugates of $(\<Y,\ws_1\>^{t_2})_{\s_2} = (\< Y,\ws_2\>_{t\s_2})^{t_2}$ that contain $F(g)$. Then
\begin{equation} \label{eq:ms}
\text{$m_s = \sum_{t \in S_{2,s}} m_{(s,t)}$ and $n_t = \sum_{s \in S_{1,t}} n_{(t,s)}$.}
\end{equation}

Notice that both $\bigcup_{s \in S_1}\{ (Y^\circ s\s_1, Y^\circ t\s_2) \mid t \in S_{2,s} \}$ and $\bigcup_{t \in S_2}\{ (Y^\circ s\s_1, Y^\circ t\s_2) \mid s \in S_{1,t} \}$ are sets of orbit representatives for 
\[
\{ (Y^\circ s\s_1, Y^\circ t\s_2) \in N_X(Y^\circ)\s_1/Y^\circ \times N_X(Y^\circ)\s_2/Y^\circ \mid [Y^\circ s\s_1, Y^\circ t\s_2] = Y^\circ \}
\] 
under the conjugation action of $N_X(Y^\circ)/Y^\circ$. For $s, x \in X$ we can, and will, assume that $(s^x)_i = s_i^x$. Thus, for all $x \in X$, we have $(Y^\circ_{(t\s_2)^x}(s\ws_1)^x)^{(t^x)_2} = (Y^\circ_{t\s_2}s\ws_1)^{t_2x}$, so $n_{(t',s')} = n_{(t,s)}$ if $(s'\s_1,t'\s_2) = (s\s_1,t\s_2)^x$. Therefore,
\begin{equation} \label{eq:swap}
\sum_{s \in S_1}\sum_{t \in S_{2,s}} n_{(t,s)} = \sum_{t \in S_2}\sum_{s \in S_{1,t}} n_{(t,s)} 
\end{equation}

The conclusion of part~(ii) is that $m_{(s,t)} = n_{(t,s)}$ for all $s,t \in Y$ such that $[s\s_1,t\s_2]$, and, in light of \eqref{eq:ms} and~\eqref{eq:swap}, this implies that
\[
\sum_{s \in S_1} m_s = \sum_{s \in S_1}\sum_{t \in S_{2,s}} m_{(s,t)} = \sum_{s \in S_1}\sum_{t \in S_{2,s}} n_{(t,s)} = \sum_{t \in S_2}\sum_{s \in S_{1,t}} n_{(t,s)} = \sum_{t \in S_2} n_t.
\]
which is the conclusion of part~(i).

It remains to prove part~(ii). Let $s,t \in N_X(Y^\circ)$ such that $[s\s_1,t\s_2]=1$. We have
\begin{align*}
m_{(s,t)} &= \frac{|\{ x \in \< X_{\s_1},\ws_2 \> \mid g \in (Y^\circ_{s\s_1}t\ws_2)^{s_1 x} \}|}{|N_{\<X_{\s_1},\ws_1\>}((Y^\circ_{s\s_1}t\ws_2)^{s_1})|}  
           = \frac{|g^{X_{\s_1}} \cap  (Y^\circ_{s\s_1}t\ws_2)^{s_1}||C_{\<X_{\s_1},\ws_2\>}(g)|}{|N_{\<X_{\s_1},\ws_2\>}((Y^\circ_{s\s_1}t\ws_2)^{s_1})|} \\
          &= \frac{|(g^{s_1^{-1}})^{X_{s\s_1}} \cap    Y^\circ_{s\s_1}t\ws_2||C_{\<X_{\s_1},\ws_2\>}(g)|}{|N_{\<X_{s\s_1},\ws_2\>}(Y^\circ_{s\s_1}t\ws_2)|}
           = \frac{|(g^{s_1^{-1}})^{X_{s\s_1}} \cap  Y^\circ_{s\s_1}t\ws_2||C_{X_{\s_1}}(g)|}{|N_{X_{s\s_1}}(Y^\circ_{s\s_1}t\ws_2)|}.
          \stepcounter{equation}\tag{\theequation}\label{eq:m}
\end{align*}
Similarly,
\begin{equation}
n_{(t,s)} = \frac{|(F(g)^{t_2^{-1}})^{X_{t\s_2}} \cap  Y^\circ_{t\s_2}s\ws_1||C_{X_{\s_2}}(F(g))|}{|N_{X_{t\s_2}}(Y^\circ_{t\s_2}s\ws_1)|}. \label{eq:n}
\end{equation}

Let $y_1,\dots,y_k$ represent the $Y^\circ_{s\s_1}$-classes in $Y^\circ_{s\s_1} t\ws_2$, and let $I = \{ i \mid y_i \in (g^{s_1^{-1}})^{X_{s\s_1}} \}$. Then
\begin{equation}
|(g^{s_1^{-1}})^{X_{s\s_1}} \cap  Y^\circ_{s\s_1}t\ws_2| = \sum_{i \in I} |y_i^{Y^\circ_{s\s_1}}| = \sum_{i \in I} \frac{|Y^\circ_{s\s_1}|}{|C_{Y^\circ_{s\s_1}}(y_i)|}. \label{eq:m_frac}
\end{equation}
Observe that
\[
I = \{ i \mid y_i \in (g^{s_1^{-1}})^{X_{s\s_1}} \}
  = \{ i \mid (y_i,s\s_1)^X = (g^{s_1^{-1}}\!\!\!,s\s_1)^X \}.
\]
Applying the Shintani map $F$, we see
\[
(g^{s_1^{-1}}\!\!\!,s\s_1)^X = (g,\s_1)^X = (\s_2,F(g))^X = (t\s_2,F(g)^{t_2^{-1}})^X.
\]
Let $E_{(s,t)}$ be the Shintani map of $(Y^\circ,s\s_1,t\s_2)$. Then $E_{(s,t)}(y_1), \dots, E_{(s,t)}(y_k)$ represent the $Y^\circ_{t\s_2}$-classes in $Y^\circ_{t\s_2} s\ws_1$, and  $(y_i,s\s_1)^{Y^\circ} = (t\s_2,E_{(s,t)}(y_i))^{Y^\circ}$, so $(y_i,s\s_1)^X = (t\s_2,E_{(s,t)}(y_i))^X$. Therefore,
\[
I = \{ i \mid (t\s_2,E_{(s,t)}(y_i))^X = (t\s_2,F(g)^{t_2^{-1}})^X \}
  = \{ i \mid E_{(s,t)}(y_i) \in (F(g)^{t_2^{-1}})^{X_{t\s_2}} \}.
\]
This implies that
\begin{equation} \label{eq:n_frac}
|(F(g)^{t_2^{-1}})^{X_{t\s_2}} \cap  Y^\circ_{t\s_2}s\ws_1| = \sum_{i \in I} |y_i^{Y^\circ_{t\s_2}}| = \sum_{i \in I} \frac{|Y^\circ_{t\s_2}|}{|C_{Y^\circ_{t\s_2}}(E_{(s,t)}(y_i))|}.
\end{equation}

By Theorem~\ref{thm:shintani}, $|C_{X_{\s_1}}(g)| = |C_{X_{\s_2}}(F(g))|$ and $|C_{Y^\circ_{s\s_1}}(y_i)| = |C_{Y^\circ_{t\s_2}}(E_{(s,t)}(y_i))|$. Therefore, by combining \eqref{eq:m}--\eqref{eq:n_frac}, we obtain
\[
\frac{m_{(s,t)}}{n_{(t,s)}} 
= \frac{|N_{X_{t\s_2}}(Y^\circ_{t\s_2}s\ws_1):Y^\circ_{t\s_2}|}{|N_{X_{s\s_1}}(Y^\circ_{s\s_1}t\ws_2):Y^\circ_{s\s_1}|} 
= \frac{|C_{N_{X_{t\s_2}}(Y^\circ_{t\s_2})/Y^\circ_{t\s_2}}(s\s_1)|}{|C_{N_{Y_{s\s_1}}(Y^\circ_{s\s_1})/Y^\circ_{s\s_1}}(t\s_2)|} 
= \frac{|C_{(N_X(Y^\circ)/Y^\circ)_{t\s_2}}(s\s_1)|}{|C_{(N_X(Y^\circ)/Y^\circ)_{s\s_1}}(t\s_2)|}.
\]
We conclude that $m_{(s,t)}=n_{(t,s)}$ since
\[
|C_{(N_X(Y^\circ)/Y^\circ)_{t\s_2}}(s\s_1)| = |C_{(N_X(Y^\circ)/Y^\circ)}(s\s_1) \cap C_{(N_X(Y^\circ)/Y^\circ)}(t\s_2)| = |C_{(N_X(Y^\circ)/Y^\circ)_{s\s_1}}(t\s_2)|.
\]
This completes the proof.
\end{proof}

The following example not only highlights how we apply Theorem~\ref{thm:shintani_subgroups_strong}, but also introduces an embedding that we will return to at the end of the paper.

\begin{example} \label{ex:symplectic}
Let $q=2^f$. Let $T = \Sp_{2m}(q)$ and $g = \p^i$ where $i$ divides $f$. Let $X$ be the simple algebraic group $\Sp_{2m}$ and let $\p$ be the standard Frobenius endomorphism $(x_{ij}) \mapsto (x_{ij}^2)$. Fix $\s = \p^i$, and write $\s_1 = \s^e$ and $\s_2 = \s$, where $e=f/i$. Then the Shintani map of $(X,\s_1,\s_2)$ is $F\:\Sp_{2m}(q)g \to \Sp_{2m}(q_0)$, where $q_0 = 2^i$ and $q=q_0^e = 2^f$.

Let $Y = \O_{2m}$, a maximal closed $\s$-stable subgroup of $X$. Now $Y^\circ = \Omega_{2m}$ and $Y = Y^\circ{:}\<s\>$ where $s$ is a transvection. Then $\{ 1, s \}$ is a transversal of $Y^\circ$ in $Y = N_X(Y^\circ)$. Fix $s_1,s_2 \in X$ such that $[s_i^{-1},\s_i^{-1}] = s$.  

Both $Y_{\s^e} = \O^+_{2m}(q)$ and $(Y^{s_1})_{\s^e} = Y_{s\s^e}^{s_1} = \O^-_{2m}(q)$ are maximal subgroups of $X_{\s^e} = \Sp_{2m}(q)$, and $Y_\s = \O^+_{2m}(q_0)$ and $(Y^{s_2})_{\s} = Y_{s\s}^{s_2} = \O^-_{2m}(q_0)$ are maximal subgroups of $X_{\s} = \Sp_{2m}(q_0)$.

We now apply Theorem~~\ref{thm:shintani_subgroups_strong}.
\begin{enumerate}
\item For all $x \in \Sp_{2m}(q)$, the total number of maximal subgroups of $G = \<  \Sp_{2m}(q), \wp^i \>$ of type $\O^+_{2m}(q)$ or $\O^-_{2m}(q)$ that contain $x\wp^i$ equals the total number of maximal subgroups of $G_0 = \Sp_{2m}(q_0)$ of type $\O^+_{2m}(q_0)$ or $\O^-_{2m}(q_0)$ that contain $F(x\wp^i)$.
\item In fact, we have the following more detailed correspondences
\begin{align*}
\{ A \in (\Omega^+_{2m}(q)\p^i)^G  \mid x \in A \} &\longleftrightarrow \{ B \in  \Omega^+_{2m}(q_0)^{G_0}         \mid F(x) \in B \}  \\ 
\{ A \in (\Omega^-_{2m}(q)\p^i)^G  \mid x \in A \} &\longleftrightarrow \{ B \in (\Omega^+_{2m}(q_ 0)s)^{G_0}      \mid F(x) \in B \}  \\          
\{ A \in (\Omega^+_{2m}(q)s\p^i)^G \mid x \in A \} &\longleftrightarrow \{ B \in (\Omega^-_{2m}(q_0)s^e)^{G_0}     \mid F(x) \in B \}  \\        
\{ A \in (\Omega^-_{2m}(q)s\p^i)^G \mid x \in A \} &\longleftrightarrow \{ B \in (\Omega^-_{2m}(q_0)s^{e+1})^{G_0} \mid F(x) \in B \}. 
\end{align*}
\end{enumerate}
\end{example} 

\begin{remark} \label{rem:symplectic}
Example~\ref{ex:symplectic}(i) was given by more indirect means in \cite[Section~2.2.3]{ref:Harper17} relying on the isogeny $\SO_{n+1} \to \Sp_n$ in characteristic two and a bespoke geometric argument, but the more detailed information given in Example~\ref{ex:symplectic}(ii) was not available in that context.
\end{remark}

By \cite[Theorem~2]{ref:Dye79}, when $p=2$, every element of $\Sp_{2m}(q)$ is contained in a maximal subgroup of type $\O^+_{2m}(q)$ or $\O^-_{2m}(q)$. Therefore, Example~\ref{ex:symplectic} implies the following.

\begin{proposition} \label{prop:symplectic}
Let $p=2$ and let $\PSp_{2m}(q) \leq G \leq \PGaSp_{2m}(q)$. Then every element of $G$ is contained in a maximal subgroup of the form $N_G(H)$ where $H$ is a maximal subgroup of $\PSp_{2m}(q)$ isomorphic to $\O^+_{2m}(q)$ or $\O^-_{2m}(q)$.
\end{proposition}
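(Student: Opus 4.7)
The plan is to proceed by strong induction on $f$, where $q = 2^f$; the base case $f = 1$ is immediate from \cite[Theorem~2]{ref:Dye79}. For the inductive step, every $G$ with $\PSp_{2m}(q) \leq G \leq \PGaSp_{2m}(q)$ has the form $G = \langle \Sp_{2m}(q), \wp^i\rangle$ for some $i \mid f$; setting $e = f/i$, every element of $G$ can be written as $g = x\wp^{ij}$ with $x \in \Sp_{2m}(q)$ and $0 \leq j < e$.

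When $j = 0$, Dye's theorem gives a maximal subgroup $H \leq \Sp_{2m}(q)$ of type $\O^\pm_{2m}(q)$ containing $g$. Since both $\Sp_{2m}(q)$-classes of such $H$ are stabilised by every field automorphism, a short index computation shows that $N_G(H)$ is a maximal subgroup of $G$ of the form $H.C_e$, and it contains $g$. When $j > 0$, I apply Theorem~\ref{thm:shintani_subgroups_strong} in the setting of Example~\ref{ex:symplectic}, but with $\s_1 = \varphi^f$ and $\s_2 = \varphi^{ij}$; the generality afforded by Section~\ref{s:shintani} allows this choice even when $ij \nmid f$, so that $\s_1$ need not be a power of $\s_2$. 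Here $g \in X_{\s_1}\ws_2 = \Sp_{2m}(q)\wp^{ij}$, and the Shintani image $F(g)$ lies in $X_{\s_2}\ws_1 = \Sp_{2m}(2^{ij})\ws_1$, where $\ws_1$ is a power of the standard Frobenius of $\Sp_{2m}(2^{ij})$. The crucial point is $ij < ei = f$, so the almost simple group $\langle \Sp_{2m}(2^{ij}), \ws_1\rangle$ falls under the inductive hypothesis, yielding a maximal subgroup of type $\O^\pm_{2m}(2^{ij})$ containing $F(g)$. The overgroup correspondence of Theorem~\ref{thm:shintani_subgroups_strong}, paralleling Example~\ref{ex:symplectic}(ii), then delivers a maximal subgroup $M$ of $\langle \Sp_{2m}(q), \wp^{ij}\rangle$ of type $\O^\pm_{2m}(q)$ containing $g$, and extending $M$ to $N_G(M \cap \Sp_{2m}(q))$ as in the $j = 0$ case produces the required maximal subgroup of $G$.

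The main obstacle will be rerunning the bookkeeping of Example~\ref{ex:symplectic} for these more general Steinberg endomorphisms; in particular, when the target coset $X_{\s_2}\ws_1$ is non-trivial — a situation not encountered in Example~\ref{ex:symplectic}, where $\s_1 = \s_2^e$ forces $\ws_1 = 1$ — one must verify that the correspondence of Theorem~\ref{thm:shintani_subgroups_strong} still pairs up maximal orthogonal overgroups on both sides, and that the induction closes on $f$ rather than getting stuck at an intermediate stage where the target coset is still non-trivial.
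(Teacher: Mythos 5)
Your proof is correct but takes a genuinely different route from the paper's. The paper applies Example~\ref{ex:symplectic} directly: with $\s_1 = \s^e$ and $\s_2 = \s = \p^i$ for $i \mid f$, the Shintani map compares the coset $\Sp_{2m}(q)\wp^i$ against the \emph{trivial} coset $\Sp_{2m}(q_0)$, so Dye's theorem applies on the nose to the target. Covering an arbitrary $g \in G$, which may lie in $\Sp_{2m}(q)\wp^a$ with $a \nmid f$, is left implicit; one replaces $g$ by a generator of $\langle g\rangle$ lying in a coset $\Sp_{2m}(q)\wp^{i'}$ with $i' \mid f$ (such a generator exists by an elementary CRT argument, and it has the same set of maximal overgroups as $g$). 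You instead run a strong induction on $f$ using the general Shintani setup $(\Sp_{2m},\p^f,\p^{ij})$, never requiring $\s_1$ to be a power of $\s_2$. This handles every coset uniformly, but the target coset $\Sp_{2m}(2^{ij})\ws_1$ may now be nontrivial, which is exactly why you must invoke the inductive hypothesis rather than Dye. Both routes are sound; yours makes the coset reduction explicit at the cost of an extra inductive layer, whereas the paper's is shorter but silently defers the reduction to a nice coset. The obstacle you flag --- that Theorem~\ref{thm:shintani_subgroups_strong} should still pair orthogonal overgroups when $\ws_1 \neq 1$ --- is not a real gap: that theorem is stated for an arbitrary pair of commuting Steinberg endomorphisms, and with $Y = \O_{2m}$ the subgroups appearing in its statement are precisely the orthogonal-type overgroups in $\langle X_{\s_1},\ws_2\rangle$ and $\langle X_{\s_2},\ws_1\rangle$ respectively, so part~(i) of that theorem gives the needed correspondence verbatim; and the induction visibly terminates since $ij < ei = f$, as you observe.
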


\subsection{Further properties} \label{ss:shintani_properties}

Continue to assume that $X$ is a connected algebraic group, $\s_1$ and $\s_2$ are commuting Steinberg endomorphisms of $X$ and $F$ is the Shintani map of $(X,\s_1,\s_2)$.

We now present properties of Shintani descent, beginning with a general version of \cite[Lemma~3.2.2]{ref:Harper}. If an isogeny $\pi\:X \to Y$ has a $\<\s_1, \s_2\>$-stable kernel, then $\s_i$ induces a Steinberg endomorphism $\s_{i,Y}$ on $Y$ such that $\s_{i,Y} \circ \pi = \pi \circ \s_i$ and, by mapping $\s_i \mapsto \s_{i,Y}$, $\pi$ extends to an abstract group homomorphism $\pi\: \< X, \s_1, \s_2\> \to \< Y, \s_{1,Y}, \s_{2,Y} \>$. We simply write $\s_i$ for $\s_{i,Y}$.

\begin{lemma} \label{lem:shintani_isogeny}
Let $\pi\:X \to Y$ be an isogeny with a $\<\s_1,\s_2\>$-stable kernel. Let $E$ be the Shintani map of $(Y,\s_1,\s_2)$. Then $E \circ \pi = \pi \circ F$ and $E^{-1} \circ \pi = \pi \circ F^{-1}$
\end{lemma}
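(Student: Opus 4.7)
The plan is to exploit the defining characterisation of the Shintani map from Theorem~\ref{thm:shintani}: $F(g^{X_{\s_1}}) = h^{X_{\s_2}}$ iff $(g,\s_1)^X = (\s_2,h)^X$ inside $\<X,\s_1,\s_2\>$, and analogously for $E$ inside $\<Y,\s_1,\s_2\>$. The $\<\s_1,\s_2\>$-stability of $\ker\pi$ is exactly what ensures (as discussed in the paragraph preceding the lemma) that $\pi$ extends to a group homomorphism $\<X,\s_1,\s_2\> \to \<Y,\s_1,\s_2\>$ that sends each $\s_i$ to itself; this extension is what lets us transport the defining conjugacy relation from $X$ to $Y$.

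First I would verify that $\pi$ induces a well-defined map on the relevant sets of classes. If $g \in (X\s_2)_{\s_1}$, then $\pi(g) \in (Y\s_2)_{\s_1}$ because the extended $\pi$ commutes with $\s_1$ and preserves the coset $X\s_2$; and if $g' = g^x$ for some $x \in X_{\s_1}$, then $\pi(g') = \pi(g)^{\pi(x)}$ with $\pi(x) \in Y_{\s_1}$. So $\pi$ descends to a well-defined map from the domain of $F$ to the domain of $E$, and likewise on codomains.

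Next, fix $g$ in the domain of $F$ and set $h^{X_{\s_2}} = F(g^{X_{\s_1}})$. By Theorem~\ref{thm:shintani}, $(g,\s_1)^X = (\s_2,h)^X$, so there is some $a \in X$ with $(g,\s_1) = (\s_2,h)^a$ in $\<X,\s_1,\s_2\>$. Applying the extended homomorphism $\pi$ gives
\[
(\pi(g),\s_1) = (\s_2,\pi(h))^{\pi(a)}
\]
in $\<Y,\s_1,\s_2\>$, hence $(\pi(g),\s_1)^Y = (\s_2,\pi(h))^Y$. Invoking Theorem~\ref{thm:shintani} for $Y$, this is precisely the statement that $E(\pi(g)^{Y_{\s_1}}) = \pi(h)^{Y_{\s_2}}$, i.e.\ $E(\pi(g^{X_{\s_1}})) = \pi(F(g^{X_{\s_1}}))$. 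Therefore $E \circ \pi = \pi \circ F$.

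Finally, the identity $E^{-1} \circ \pi = \pi \circ F^{-1}$ requires no new work: $F^{-1}$ and $E^{-1}$ are the Shintani maps of $(X,\s_2,\s_1)$ and $(Y,\s_2,\s_1)$ respectively (this is built into the symmetric formulation of Theorem~\ref{thm:shintani}), so the identity we just proved, applied to this swapped pair of Steinberg endomorphisms, gives it. The only real subtlety, and thus the only point one must treat carefully, is the legitimacy of extending $\pi$ to the semidirect product so that it intertwines $\s_i$ with $\s_i$; but this is precisely what the hypothesis on $\ker\pi$ provides, after which the proof is an entirely routine transport of a conjugacy relation across a homomorphism.
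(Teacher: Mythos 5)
Your proof is correct and takes essentially the same approach as the paper: both arguments transport the defining conjugacy relation $(g,\s_1)^X = (\s_2,h)^X$ through the extension of $\pi$ to a homomorphism $\<X,\s_1,\s_2\> \to \<Y,\s_1,\s_2\>$ and then invoke the characterisation of the Shintani map in Theorem~\ref{thm:shintani} on the $Y$ side. Your additional verification of well-definedness of $\pi$ on classes and the use of the symmetric formulation to obtain $E^{-1} \circ \pi = \pi \circ F^{-1}$ are sound, though the paper simply writes out both displayed equalities directly.
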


\begin{proof}
If $(x\s_2,\s_1)^X = (\s_2,y\s_1)^X$ then $(\pi(x)\s_2,\s_1)^Y = (\s_2,\pi(y)\s_1)^Y$, so
\begin{gather*}
E(\pi(x\s_2)^{Y_{\s_1}}) =  E((\pi(x)\s_2)^{Y_{\s_1}}) = (\pi(y)\s_1)^{Y_{\s_2}} = \pi((y\s_1)^{X_{\s_2}}) = \pi(F(x\s_2)), \\
E^{-1}(\pi(y\s_1)^{Y_{\s_2}}) =  E^{-1}((\pi(y)\s_1)^{Y_{\s_2}}) = (\pi(x)\s_2)^{Y_{\s_1}} = \pi((x\s_2)^{X_{\s_1}}) = \pi(F^{-1}(y\s_1)),
\end{gather*}
which proves the result.
\end{proof}

\begin{corollary} \label{cor:shintani_bijective_isogeny}
Let $\pi\:X \to Y$ be a bijective isogeny with a $\<\s_1,\s_2\>$-stable kernel. Then $\pi \circ F \circ \pi^{-1}$. is the Shintani map of $(Y,\s_1,\s_2)$. 
\end{corollary}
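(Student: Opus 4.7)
The approach is to deduce the corollary directly from Lemma~\ref{lem:shintani_isogeny}: once we know the induced map of $\pi$ on the relevant class sets is a bijection, the identity $E \circ \pi = \pi \circ F$ from the lemma can be rearranged as $E = \pi \circ F \circ \pi^{-1}$, which is exactly what is claimed.

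First I would check that a bijective isogeny $\pi\:X \to Y$ with $\<\s_1,\s_2\>$-stable kernel restricts to a bijection $X_{\s_i} \to Y_{\s_i}$ for $i = 1,2$. Injectivity is immediate from injectivity of $\pi$; for surjectivity, given $y \in Y_{\s_i}$, pick the unique $x \in X$ with $\pi(x) = y$, and note that $\pi(\s_i(x)) = \s_i(\pi(x)) = y = \pi(x)$ forces $\s_i(x) = x$ by injectivity. Since $\pi$ is equivariant with respect to the conjugation actions of $X_{\s_i}$ and $Y_{\s_i}$, it induces bijections
\[
\{ (x\s_2)^{X_{\s_1}} \mid x \in X_{\s_1} \} \longrightarrow \{ (y\s_2)^{Y_{\s_1}} \mid y \in Y_{\s_1} \}
\]
and the analogous map on $\s_2$-classes, both of which we also denote by $\pi$.

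Now by Lemma~\ref{lem:shintani_isogeny} applied to $\pi$, we have $E \circ \pi = \pi \circ F$ as maps on class sets, where $E$ is the Shintani map of $(Y,\s_1,\s_2)$. Composing on the right with the (now well-defined) inverse $\pi^{-1}$ gives $E = \pi \circ F \circ \pi^{-1}$, as required.

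There is no real obstacle: the only point requiring a moment's care is to confirm that the two occurrences of $\pi$ in the statement refer to the induced bijections on the domain and codomain class sets respectively, and that these are genuinely bijections under the hypothesis that $\pi$ has trivial kernel. Once this bookkeeping is in place, the corollary is a one-line consequence of Lemma~\ref{lem:shintani_isogeny}.
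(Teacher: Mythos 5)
Your proof is correct and is exactly the argument the paper intends: it reads the corollary as an immediate consequence of Lemma~\ref{lem:shintani_isogeny}, and the only substantive check -- that the bijectivity of $\pi$ on points forces $\pi$ to restrict to a bijection $X_{\s_i}\to Y_{\s_i}$, hence to induce bijections on the class sets on which $F$ and $E$ live -- is exactly the bookkeeping the paper leaves to the reader. No gap.
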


\begin{corollary} \label{cor:shintani_isogeny}
Let $\pi\:X \to Y$ be an isogeny with a $\<\s_1,\s_2\>$-stable kernel. Let $E$ be the Shintani map of $(Y,\s_1,\s_2)$. Then $E$ restricts to a bijection
\[
\{ (x\ws_2)^{Y_{\s_1}} \mid x \in \pi(X_{\s_1}) \} \to \{ (y\ws_1)^{Y_{\s_2}} \mid y \in \pi(X_{\s_2}) \}.
\]
\end{corollary}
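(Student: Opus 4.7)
The plan is to deduce the restricted bijection directly from Lemma~\ref{lem:shintani_isogeny}, which has already established the compatibility $E \circ \pi = \pi \circ F$ and $E^{-1} \circ \pi = \pi \circ F^{-1}$, where $F$ is the Shintani map of $(X,\s_1,\s_2)$. Since $E$ is a bijection on the full class space of $Y$, what remains is purely bookkeeping: to verify that the restriction of $E$ to classes with representatives in $\pi(X_{\s_1})\ws_2$ lands precisely in classes with representatives in $\pi(X_{\s_2})\ws_1$, and likewise for $E^{-1}$ in the opposite direction.

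For the forward inclusion I would take $x \in X_{\s_1}$, and pick $y \in X_{\s_2}$ with $F((x\ws_2)^{X_{\s_1}}) = (y\ws_1)^{X_{\s_2}}$. Applying $\pi$ and invoking $E \circ \pi = \pi \circ F$ from Lemma~\ref{lem:shintani_isogeny} yields
\[
E((\pi(x)\ws_2)^{Y_{\s_1}}) = (\pi(y)\ws_1)^{Y_{\s_2}},
\]
which sits inside the claimed image set since $\pi(y) \in \pi(X_{\s_2})$. The reverse inclusion is entirely symmetric: for $y \in X_{\s_2}$, the identity $E^{-1} \circ \pi = \pi \circ F^{-1}$ produces some $x \in X_{\s_1}$ with $E^{-1}((\pi(y)\ws_1)^{Y_{\s_2}}) = (\pi(x)\ws_2)^{Y_{\s_1}}$, an element of the proposed domain set. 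Because $E$ is a bijection on the ambient class spaces, these two containments force its restriction to the subset in question to be a bijection onto the corresponding subset on the other side.

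The only real obstacle is notational: the symbols $\s_i$ and $\ws_i$ are simultaneously used for the Steinberg endomorphism of $X$, its induced counterpart on $Y$ (under the identification $\s_{i,Y} \circ \pi = \pi \circ \s_i$), and their restrictions to the various fixed-point subgroups. Once these conflations are pinned down as in the paragraph preceding Lemma~\ref{lem:shintani_isogeny}, the argument reduces to unpacking definitions and quoting that lemma, so no further input is required.
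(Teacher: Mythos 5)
Your argument is correct and follows exactly the route the paper intends: the corollary is stated without proof immediately after Lemma~\ref{lem:shintani_isogeny} precisely because it is this kind of bookkeeping. Your use of the two identities $E\circ\pi=\pi\circ F$ and $E^{-1}\circ\pi=\pi\circ F^{-1}$ to establish the two containments, and the observation that a bijection on the ambient set satisfying both containments restricts to a bijection between the subsets, is both complete and the same approach.
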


\begin{example} \label{ex:shintani_isogeny}
Let $Y = \PSL_n$, let $\p\: Y \to Y$ be Frobenius endomorphism $(y_{ij}) \mapsto (y_{ij}^p)$ and let $\g\: Y \to Y$ be the standard involutory graph automorphism $y \mapsto (y^{-\tr})^J$ where $J$ is the antidiagonal matrix with antidiagonal entries $1,-1,1,-1,\dots,(-1)^{n+1}$. Write $q=p^f$ and let $E\:\PGL_n(q)\g\wp \to \PGU_n(p)\g^f$ be the Shintani map of $(Y,\p^f,\g\p)$, noting that $\wp^f|_{Y_{\g\p}} = \g^f|_{Y_{\g\p}}$. For many applications, we will want to focus on the coset $\PSL_n(q)\g\wp$. Does this coset correspond to $\PSU_n(p)\g^f$ under $F$? Yes. To see this, let $\pi\: \SL_n \to \PSL_n$ be isogeny given by taking the quotient of $\SL_n$ by its centre. As $\pi(Y_{\p^f}) = \pi(\SL_n(q)) = \PSL_n(q)$ and $\pi(Y_{\g\p}) = \pi(\SU_n(p)) = \PSU_n(p)$, Corollary~\ref{cor:shintani_isogeny} implies that $E$ restricts to the bijection
\[
\{ (x\wp)^{\PGL_n(q)} \mid x \in \PSL_n(q) \} \to \{ (y\g^f)^{\PGU_n(p)} \mid y \in \PSU_n(p) \}.
\]
When $f$ is even this can also be proved via determinants, as in \cite[Lemma~5.3]{ref:BurnessGuest13}, but this more concrete approach does not apply when $f$ is odd.
\end{example}

The application of Corollary~\ref{cor:shintani_isogeny} in Example~\ref{ex:shintani_isogeny} will be vastly generalised by the concept of \emph{outer Shintani descent}, which we introduce in Section~\ref{ss:lie_outer}.

The next lemma captures how powers and element orders behave under Shintani descent.

\enlargethispage{12pt}
\begin{lemma} \label{lem:shintani_order}
Assume that there exist $d,e \geq 1$ such that $d$ divides $|\ws_1|$, $de$ divides $|\ws_2|$ and $\ws_2^{de} = \ws_1^d$. For all $x \in X_{\s_1}$, the following hold
\begin{enumerate}
\item $F(x\ws_2)^d = E((x\ws_2)^d)$, where $E$ is the Shintani map of $(X,\s_2^{de},\s_2^d)$
\item $|x\ws_2| = de|F(x\ws_2)|$.
\end{enumerate}
\end{lemma}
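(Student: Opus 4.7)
The plan is to deduce both statements from the explicit parametric form of the Shintani map provided by Lemma~\ref{lem:shintani_explicit}, handling (i) by a direct computation and then leveraging (i) to obtain (ii).

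For part~(i), I would first apply Lemma~\ref{lem:shintani_explicit} with $s=t=1$ to pick $a \in X$ so that, as elements of the ambient group $X\<\s_1,\s_2\>$,
\[
x\ws_2 = a\s_2 a^{-1} \qquad \text{and} \qquad F(x\ws_2) = a^{-1}\s_1 a.
\]
A short induction then gives $(x\ws_2)^d = a\s_2^d a^{-1}$ and $F(x\ws_2)^d = a^{-1}\s_1^d a$. The hypothesis $\ws_2^{de}=\ws_1^d$, read as the identity of endomorphisms $\s_1^d = \s_2^{de}$ on $X$ (equivalently, on the fixed-point subgroups $X_{\s_1}$ and $X_{\s_2}$, where each restricts to the trivial automorphism), lets me rewrite $F(x\ws_2)^d = a^{-1}\s_2^{de}a$. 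A second application of Lemma~\ref{lem:shintani_explicit}, now to the Shintani map $E$ of $(X,\s_2^{de},\s_2^d)$, verifies that $X$-conjugation by $a^{-1}$ sends the pair $(a\s_2^d a^{-1},\,\s_2^{de})$ to $(\s_2^d,\,a^{-1}\s_2^{de}a)$, so by the defining property of $E$ we have $E((x\ws_2)^d) = a^{-1}\s_2^{de}a = F(x\ws_2)^d$, which proves~(i).

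For part~(ii), the idea is to combine (i) with the classical-style order relation for $E$. Note that $E$ is the Shintani map of two commuting powers of $\s_2$, that $\widetilde{\s_2^{de}}$ is trivial on $X_{\s_2^d}$, and that $\widetilde{\s_2^d}$ has order $e$ on $X_{\s_2^{de}}$; a short semidirect-product computation therefore yields $|(x\ws_2)^d| = e\cdot|E((x\ws_2)^d)|$. Substituting part~(i) on the right, and using the standard observation that in a semidirect product $N\rtimes\<\phi\>$ with $N$ finite and $\phi$ of order $m$, every element of the coset $N\phi$ has order $m$ times the order of its $m$-th power in $N$, together with the hypotheses $|\ws_1|=d$ and $|\ws_2|=de$ (which follow from the given divisibility conditions and $\ws_2^{de}=\ws_1^d=1$ on the respective fixed-point groups), the claimed order identity drops out.

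The principal obstacle is the careful bookkeeping between the three groups in play: the ambient group $X\<\s_1,\s_2\>$, in which $\s_1$ and $\s_2$ have infinite order and the parametrisation via $a$ is cleanest; and the two finite semidirect products $X_{\s_1}\rtimes\<\ws_2\>$ and $X_{\s_2}\rtimes\<\ws_1\>$, in which the orders $|x\ws_2|$ and $|F(x\ws_2)|$ must be measured. One must in particular be scrupulous about invoking $\s_1^d = \s_2^{de}$ only where it holds as a relation between the relevant restrictions, rather than as an unrestricted equality of Steinberg endomorphisms of $X$.
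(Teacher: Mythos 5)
Your approach is essentially the paper's own: use the explicit parametrisation from Lemma~\ref{lem:shintani_explicit} to write $x\ws_2 = a\s_2 a^{-1}$ and $F(x\ws_2) = a^{-1}\s_1 a$, take $d$-th powers, substitute $\s_1^d = \s_2^{de}$, and reapply Lemma~\ref{lem:shintani_explicit} to the Shintani map $E$ of $(X,\s_2^{de},\s_2^d)$. Part~(i) is correct and matches the paper step for step (the paper writes the same elements in the form $aa^{-\s^{-1}}\ws$, but it is the same computation).

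For part~(ii) you should be more careful, because "the claimed order identity drops out" glosses over a real arithmetic point. Tracing your own ingredients: from $|\widetilde{\s_2^d}| = e$ on $X_{\s_2^{de}}$ you get $|(x\ws_2)^d| = e\,|E((x\ws_2)^d)| = e\,|F(x\ws_2)^d|$ after substituting part~(i). From $|\ws_2| = de$ you get $|x\ws_2| = d\,|(x\ws_2)^d|$, hence $|x\ws_2| = de\,|F(x\ws_2)^d|$. But then applying the same semidirect-product observation to $X_{\s_2}\rtimes\<\ws_1\>$ with $|\ws_1| = d$ gives $|F(x\ws_2)| = d\,|F(x\ws_2)^d|$, so the combined computation yields $|x\ws_2| = e\,|F(x\ws_2)|$, a factor of $d$ smaller than the stated $de\,|F(x\ws_2)|$. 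The paper's own proof in fact terminates at the equality $|x\ws_2| = de\,|F(x\ws_2)^d|$ (note the extra power $d$ on the image), which is equivalent to $|x\ws_2| = e\,|F(x\ws_2)|$, so your computation agrees with the paper's proof; the mismatch is with the displayed statement of~(ii). You should either present the conclusion as $|x\ws_2| = de\,|F(x\ws_2)^d|$, matching what both derivations actually give, or flag that the factor of $d$ in the statement appears spurious (harmless for the paper, since the only downstream use is Corollary~\ref{cor:shintani_order} with $d=1$, where the two forms coincide). As written, your claim that the displayed identity "drops out" is not justified by your own intermediate formulas.

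One smaller point: your parenthetical derivation of $|\ws_1|=d$ and $|\ws_2|=de$ is correct but deserves a sentence of justification: from $\s_1^d = \s_2^{de}$ one has $X_{\s_2} \subseteq X_{\s_1^d}$ and $X_{\s_1} \subseteq X_{\s_2^{de}}$, so $\ws_1^d$ and $\ws_2^{de}$ are genuinely trivial on the respective fixed-point groups, and then the stated divisibilities pin down the orders. This is exactly the "scrupulous bookkeeping" you rightly identify as the delicate part.
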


\begin{proof}
Let $\s = \s_2^d$ and let $E$ be the Shintani map of $(X,\s^e,\s)$. Fix $a \in X$ such that $x = aa^{-\s_2^{-1}}$. Then $F(x\ws_2) = a^{-1}a^{\s_1^{-1}}\ws_2$. Now $(x\ws_2)^d = aa^{-\s^{-1}}\ws$ and $F(x\ws_2)^d = a^{-1}a^{\s^{-e}}\ws^e$, so $F(x\ws_2)^d = E((x\ws_2)^d)$. In particular, $|(x\ws_2)^d| = e|E((x\ws_2)^d)| = |F(x\ws)^d|$. Since $d$ divides $|\ws_2|$, which divides $|x\ws_2|$, we conclude that $|x\ws_2| = d|(x\ws_2)^d| = de|F(x\ws)^d|$.
\end{proof}

The following observation \cite[Lemma~3.20]{ref:BurnessGuralnickHarper} is an immediate consequence of Lemma~\ref{lem:shintani_order}.

\begin{corollary} \label{cor:shintani_order}
Let $\s$ be a Steinberg endomorphism of $X$, let $e$ be a positive integer and let $F$ be the Shintani map of $(X,\s^e,\s)$. Then $|x\ws| = e|F(x\ws)|$ for all $x \in X_{\s}$.
\end{corollary}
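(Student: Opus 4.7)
The plan is to obtain this as the $d=1$ specialisation of Lemma~\ref{lem:shintani_order}. In the notation of that lemma, I would take $\s_1 = \s^e$ and $\s_2 = \s$, so that $\ws_1 = \s^e|_{X_\s}$ and $\ws_2 = \s|_{X_{\s^e}}$; the latter is exactly the $\ws$ appearing in the corollary, and the Shintani map $F$ of $(X,\s^e,\s)$ is already the one required.

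With $d = 1$ and with $e$ as given in the corollary, the three conditions in the hypothesis of Lemma~\ref{lem:shintani_order} are straightforward to verify. Every element of $X_\s$ is fixed by $\s$, and therefore by $\s^e$, so $\ws_1$ is the identity on $X_\s$; in particular $|\ws_1| = 1$ and $d \mid |\ws_1|$. Since $\ws$ has order dividing $e$ on $X_{\s^e}$, we have $de = e \mid |\ws_2|$, and the compatibility $\ws_2^{de} = \ws_1^d$ collapses to $\ws^e = 1$, which holds by construction.

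Having checked the hypotheses, Lemma~\ref{lem:shintani_order}(ii) delivers $|x\ws| = de\,|F(x\ws)| = e\,|F(x\ws)|$ for every $x \in X_{\s^e}$, and hence in particular for every $x \in X_\s$ since $X_\s \subseteq X_{\s^e}$. There is no genuine obstacle in this argument; the substantive work is entirely discharged by Lemma~\ref{lem:shintani_order}, and the proof amounts to checking that the parameters degenerate correctly in this special case.
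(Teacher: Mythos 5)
Your approach is exactly the paper's: the corollary is stated there as an immediate consequence of Lemma~\ref{lem:shintani_order}, and specialising at $d=1$ with $\s_1=\s^e$, $\s_2=\s$, as you do, is precisely the intended derivation. There is, however, one slip in your verification of the hypothesis ``$de$ divides $|\ws_2|$'': from ``$\ws$ has order dividing $e$'' you may only infer $|\ws_2|\mid e$, not $e\mid|\ws_2|$, so the divisibility as you have written it is backwards. What you in fact need is that $\ws_2$ has order \emph{exactly} $e$; this holds because $X$ is connected of positive dimension, so $X_{\s^j}\subsetneq X_{\s^e}$ for $0<j<e$ and hence no proper power $\s^j$ can fix $X_{\s^e}$ pointwise. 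With that repaired, the hypotheses of Lemma~\ref{lem:shintani_order} are met and the rest of your argument, including the observation that the conclusion holds for all $x\in X_{\s^e}$ and in particular for $x\in X_\s$, goes through.
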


We use Corollary~\ref{cor:shintani_order} to obtain the following result, which is important to Section~\ref{s:subgroups}.

\begin{lemma} \label{lem:shintani_subfield}
Let $X$ be connected, let $\s$ be a Steinberg endomorphism of $X$ and let $e > 1$. Let $F$ be the Shintani map of $(X,\s^e,\s)$. Let $x \in X_{\s^e}$ and $y \in X_\s$ satisfy $F(x\ws) = y$. Write $G = \< X_{\s^e},\ws \>$. 
\begin{enumerate}
\item Assume that $e$ is a prime divisor of $|y|$ and $|y|$ does not properly divide the order of any element of $X_\s$. Then $x\ws$ is not contained in any $G$-conjugate of $\< X_\s, \ws \>$.
\item Assume that $e$ is coprime to $|y|$ and $y^X \cap X_\s = y^{X_\s}$. Then $x\ws$ is contained in a $G$-conjugate of $\< X_{\s^{e/k}}, \ws \>$ for every prime divisor $k$ of $e$.
\end{enumerate}
\end{lemma}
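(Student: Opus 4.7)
My plan is to prove the two parts by rather different techniques: part~(i) is an order comparison that uses Corollary~\ref{cor:shintani_order}, while part~(ii) is a direct construction using the explicit form of the Shintani correspondence in Lemma~\ref{lem:shintani_explicit} together with the Lang--Steinberg theorem.

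\textbf{Part~(i).} I would argue by contradiction: suppose $x\ws$ lies in some $G$-conjugate of $\<X_\s,\ws\>$. Since $\s$ restricts to the identity on $X_\s$, the element $\ws$ centralises $X_\s$ inside $G$, so $\<X_\s,\ws\> = X_\s \times \<\ws\>$; and because $X_{\s^e} \triangleleft G$ with $\<\ws\>\cap X_{\s^e}=1$, passing to $G/X_{\s^e}$ forces $x\ws$ to be $G$-conjugate to an element of the form $a\ws$ with $a \in X_\s$. Since $a$ and $\ws$ commute, $|a\ws| = \mathrm{lcm}(|a|,e)$, whereas Corollary~\ref{cor:shintani_order} yields $|x\ws| = e|y|$. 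The equation $\mathrm{lcm}(|a|,e) = e|y|$ together with the primality of $e$ leaves $\gcd(|a|,e) \in \{1,e\}$: in the first case $|a|=|y|$ with $\gcd(|y|,e)=1$, contradicting $e \mid |y|$; in the second case $|a|=e|y|$, so $|y|$ properly divides $|a|$, contradicting the hypothesis on $X_\s$.

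\textbf{Part~(ii).} Let $k'=e/k$; it suffices to exhibit $h \in X_{\s^e}$ with $(x\ws)^h \in X_{\s^{k'}}\ws$, since then $x\ws \in \<X_{\s^{k'}},\ws\>^{h^{-1}}$. By Lemma~\ref{lem:shintani_explicit} I fix $a \in X$ with $x = aa^{-\s^{-1}}$ and $y = a^{-1}a^{\s^{-e}}$. For any $b \in X$, a short calculation gives $(ab^{-1})^{-1} x (ab^{-1})^{\s^{-1}} = bb^{-\s^{-1}}$, so I need $b$ such that (a)~$ab^{-1} \in X_{\s^e}$ and (b)~$bb^{-\s^{-1}} \in X_{\s^{k'}}$. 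Routine manipulations show that (b) is equivalent to $b^{-1}b^{\s^{k'}} \in X_\s$, and that (a) is equivalent to $b^{-1}b^{\s^e} = y^{-1}$ (using $y \in X_\s$ to rewrite $a^{-1}a^{\s^e}=y^{-1}$); moreover, under (b) the telescoping $b^{\s^{jk'}} = b \cdot (b^{-1}b^{\s^{k'}})^j$ gives $b^{-1}b^{\s^e} = (b^{-1}b^{\s^{k'}})^k$.

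The problem therefore reduces to finding $z \in X_\s$ with $z^k = y^{-1}$, after which the Lang--Steinberg theorem applied to the Steinberg endomorphism $\s^{k'}$ supplies $b \in X$ with $b^{-1}b^{\s^{k'}} = z$. The coprimality hypothesis $(|y|,e)=1$ (hence $(|y|,k)=1$) makes $k$th powering a bijection on $\<y\>$, so I may set $z := y^{-m}$ for any $m$ with $mk \equiv 1 \pmod{|y|}$; this $z$ lies in $\<y\> \subseteq X_\s$ and satisfies $z^k = y^{-1}$. The main obstacle is the apparent interlocking of conditions (a) and (b) on $b$, and the point that makes the argument work is that under (b) the telescoping automatically upgrades $z^k = y^{-1}$ to (a), so a single application of Lang--Steinberg suffices. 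I do not see where the hypothesis $y^X \cap X_\s = y^{X_\s}$ enters this direct approach; it may be needed for an alternative proof passing through the Shintani map of $(X,\s^{k'},\s)$, where one would have to match $X_\s$-conjugacy classes under the intermediate descent.
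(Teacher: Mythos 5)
Your proof of part~(i) is essentially the paper's argument: both use Corollary~\ref{cor:shintani_order} to equate $e|y| = \operatorname{lcm}(|a|,e)$ for a hypothetical conjugate $a\ws$ and then rule out both $(|a|,e)=1$ and $(|a|,e)=e$; you simply carry out the case distinction that the paper asserts more compactly.

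Part~(ii) is where you take a genuinely different route, and it works. The paper's argument runs through the Shintani map itself: pick $d$ with $ed\equiv -1\pmod{|y|}$, so $(y^d\ws)^{-e}=y$, compute $F(y^d\ws)=(a'^{-1}ya')^{X_\s}$ via Lemma~\ref{lem:shintani_power} for some Lang element $a'$, invoke the hypothesis $y^X\cap X_\s = y^{X_\s}$ to identify $(a'^{-1}ya')^{X_\s}$ with $y^{X_\s}=F(x\ws)$, and then use bijectivity of $F$ to conclude $x\ws$ is $X_{\s^e}$-conjugate to $y^d\ws\in\<X_\s,\ws\>$. Your proof bypasses the Shintani bijection entirely and works at the level of Lang elements: you take the $a$ simultaneously witnessing $x=aa^{-\s^{-1}}$ and $y=a^{-1}a^{\s^{-e}}$ provided by Lemma~\ref{lem:shintani_explicit}, observe that the interlocking conditions (a) ``$ab^{-1}\in X_{\s^e}$'' and (b) ``$bb^{-\s^{-1}}\in X_{\s^{k'}}$'' reduce — via the telescoping identity $b^{-1}b^{\s^e}=(b^{-1}b^{\s^{k'}})^k$ valid whenever (b) holds — to a single Lang--Steinberg problem for $\s^{k'}$ with target $z=y^{-m}\in X_\s$, and this has a solution by the coprimality of $k$ and $|y|$. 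This is a cleaner and more explicit construction, and it buys you something concrete: your observation that the hypothesis $y^X\cap X_\s=y^{X_\s}$ plays no role is correct. That hypothesis is only there in the paper's proof to identify the $X_\s$-class of $a'^{-1}ya'$, a step your direct construction never needs, so the lemma actually holds with that hypothesis removed. The one trade-off is that the paper's argument conjugates $x\ws$ into $X_\s\ws$ by a single element, so the same conjugate lands in $\<X_{\s^{e/k}},\ws\>$ for every prime $k\div e$ simultaneously, whereas your $b$ (hence $h$) depends on $k$; this stronger uniformity is not required by the statement, so your proof suffices, but it is worth being aware of if one later wants the simultaneous containment.

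One minor caution, not a gap: your formula $(x\ws)^h=\bigl(h^{-1}xh^{\s^{-1}}\bigr)\ws$ and the ensuing telescoping are sensitive to the paper's conventions for conjugation and for $c^{\s}$ versus $\s(c)$, which are nonstandard in places. I checked that everything is internally consistent with the conventions underlying Lemma~\ref{lem:shintani_explicit} and Lemma~\ref{lem:shintani_power}, so the computation is correct; but in a write-up you should state explicitly which convention you are using, since the opposite convention flips $h=ab^{-1}$ to $h=a^{-1}b$ (or flips the exponent on $\s$), and a reader could easily be misled.
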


\begin{proof}
For part~(i), suppose that $x\ws \in \< X_\s, \ws\>^g$ for some $g \in X_{\s^e}$. Then $(x\ws)^g = h\ws \in \< X_\s, \ws \>$, that is, $h \in X_\s$. By Corollary~\ref{cor:shintani_order}, $e|y| = |h\ws| = e|h|/(e,|h|) = |h|$, which is a contradiction since we assumed that $|y|$ does not properly divide the order of any element of $X_\s$.

For part~(ii), fix $d$ such that $ed \equiv -1 \mod{|y|}$. Then $y^d \in X_\s \leq X_{\s^e}$ and $F((y^d\ws)^{X_{\s^e}}) = (a^{-1}(y^d\ws)^{-e}a)^{X_\s} = (a^{-1}ya)^{X_\s}$ for some $a \in X$. Since $y^X \cap X_\s = y^{X_\s}$, we know that $a^{-1}ya$ is $X_\s$-conjugate to $y$, so $F((y^d\ws)^{X_{\s^e}}) = y^{X_\s}$ and $x\ws = (y^d\ws)^g$ for some $g \in X_{\s^e}$. Now $y^d\ws \in \< X_\s, \ws \> \leq \< X_{\s^{e/k}}, \ws\>$ for every prime divisor $k$ of $e$, so $x\ws \in \<X_{\s^{e/k}},\ws\>^g$, as claimed.
\end{proof}

Our next result is a direct generalisation of \cite[Lemma~3.3.4]{ref:Harper}.

\begin{lemma} \label{lem:shintani_centraliser}
Let $x \in X_{\s_1}$ and let $H \leq G = \< X_{\s_1}, \ws_2 \>$. Then $x$ is contained in at most $|C_{X_{\s_2}}(F(x\ws_2))|$ distinct $G$-conjugates of $H$.
\end{lemma}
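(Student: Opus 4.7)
The plan is to combine the standard orbit-counting formula for the number of conjugates of a subgroup containing a fixed element with the normality of $X_{\s_1}$ in $G$ (which holds because $\s_1$ and $\s_2$ commute) and Theorem~\ref{thm:shintani}.

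First, by the standard double-counting argument---partitioning $\{g \in G : g^{-1}xg \in H\}$ into cosets of $C_G(x)$ indexed by $x^G \cap H$, and then identifying distinct conjugates with left cosets of $N_G(H)$---the number of $G$-conjugates of $H$ containing $x$ equals
\[
N(x, H) = \frac{|x^G \cap H| \cdot |C_G(x)|}{|N_G(H)|}.
\]
The naive bound $N(x, H) \leq |C_G(x)|$ (from $|x^G \cap H| \leq |H| \leq |N_G(H)|$) is in general too weak to match $|C_{X_{\s_2}}(F(x\ws_2))|$, so I would need a refinement.

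Second, I would exploit the coset structure $G/X_{\s_1} = \<\ws_2 X_{\s_1}\>$, which is cyclic of order $|\ws_2|$. The key structural computations are: (a) since $X_{\s_1}\ws_2$ is a single $G$-conjugation-invariant coset, the class $(x\ws_2)^G$ is contained in $X_{\s_1}\ws_2$; (b) if $H$ meets $X_{\s_1}\ws_2$, then the image $HX_{\s_1}/X_{\s_1}$ in $G/X_{\s_1}$ contains the generator $\ws_2 X_{\s_1}$ and hence equals all of $G/X_{\s_1}$, so $|H \cap X_{\s_1}\ws_2| = |H|/|\ws_2|$; (c) $|C_G(x\ws_2)| = |\ws_2| \cdot |C_{X_{\s_1}}(x\ws_2)|$, since $C_G(x\ws_2)\cap X_{\s_1} = C_{X_{\s_1}}(x\ws_2)$ and the image of $C_G(x\ws_2)$ in $G/X_{\s_1}$ contains $x\ws_2 X_{\s_1} = \ws_2 X_{\s_1}$, which generates the cyclic quotient.

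Third, I would transfer the counting from $x$ to the coset partner $x\ws_2$: every $G$-conjugate $H^g$ of $H$ containing $x$ yields, via the distinguished element $x\ws_2 \in x X_{\s_1}\ws_2$ and the normality of $X_{\s_1}$, a constraint on $H^g$ that is controlled by how $H^g$ meets the coset $X_{\s_1}\ws_2$; combining this with the counting identity applied to $x\ws_2$ in place of $x$, the factor-$|\ws_2|$ saving in $|(x\ws_2)^G\cap H|$ cancels exactly against the factor-$|\ws_2|$ growth of $|C_G(x\ws_2)|$ over $|C_{X_{\s_1}}(x\ws_2)|$. This yields the bound $N(x,H) \leq |C_{X_{\s_1}}(x\ws_2)|$, and Theorem~\ref{thm:shintani} then identifies the right-hand side with $|C_{X_{\s_2}}(F(x\ws_2))|$.

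The main obstacle is the third step: the improvement over $|C_G(x)|$ must be extracted from the interaction between the element $x \in X_{\s_1}$ and the Frobenius twist $\ws_2$ via the coset $X_{\s_1}\ws_2$, and making this transfer precise requires careful bookkeeping of how the orbits of conjugation by $G$ on the set $\{g : g^{-1}xg \in H\}$ interact with membership in the cosets of the normal subgroup $X_{\s_1}$.
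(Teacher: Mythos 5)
Your second step and the final computation are exactly the paper's proof: the paper counts the $G$-conjugates of $H$ containing the coset element $x\ws_2$ via the standard formula, bounds $|(x\ws_2)^G \cap H| \leq |H|/|\ws_2|$ using the fact that $H \cap X_{\s_1}\ws_2$ is either empty or a coset of $H \cap X_{\s_1}$ with $HX_{\s_1} = G$, uses $|C_G(x\ws_2)| = |\ws_2|\,|C_{X_{\s_1}}(x\ws_2)|$, and then applies Theorem~\ref{thm:shintani} to replace $|C_{X_{\s_1}}(x\ws_2)|$ by $|C_{X_{\s_2}}(F(x\ws_2))|$. So the substance of your argument matches the paper.

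The ``main obstacle'' you identify in your third step, however, is not a real obstacle but a misreading of the statement. The lemma is not about the number of conjugates of $H$ containing the element $x \in X_{\s_1}$; it is about the number containing $x\ws_2$ (the statement has a typo, as one sees both from the paper's proof and from the fact that it generalises a result about elements of the coset $X_{\s_1}\ws_2$). The literal claim about $x$ itself is false: take $x = 1$, which lies in every conjugate of $H$, of which there are $|G : N_G(H)|$ --- in general far more than $|C_{X_{\s_2}}(F(\ws_2))|$. There is therefore no transfer from $x$ to $x\ws_2$ to be made; your computation for $x\ws_2$ already is the complete proof, and the bookkeeping you anticipate in step three can be discarded.
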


\begin{proof}
By \cite[Lemma~2.1.2]{ref:Harper} for example, the number of $G$-conjugates of $H$ that contain $x\ws_2$ is
\begin{gather*}
\frac{|G|}{|N_G(H)|} \cdot \frac{|(x\ws_2)^G \cap H|}{|(x\ws_2)^G|} = \frac{|(x\ws_2)^G \cap H||C_G(x\ws_2)|}{|N_G(H)|} = \frac{|(x\ws_2)^G \cap H||\ws_2||C_{X_{\s_1}}(x\ws_2)|}{|N_G(H)|} \\ = \frac{|(x\ws_2)^G \cap H||\ws_2||C_{X_{\s_2}}(F(x\ws_2))|}{|N_G(H)|} \leq \frac{|H|}{|N_G(H)|} \cdot |C_{X_{\s_2}}(F(x\ws_2))| \leq |C_{X_{\s_2}}(F(x\ws_2))|,
\end{gather*}
where the penultimate inequality holds since $|(x\ws_2)^G \cap H| \leq |H|/|\ws_2|$.
\end{proof}

Let us comment on the special case of Shintani descent that has previously been applied to group theoretic problems (see \cite[Chapter~3]{ref:Harper} for a general reference). The key is the following immediate consequence of Lemma~\ref{lem:shintani_explicit}.

\begin{lemma} \label{lem:shintani_power}
Let $X$ be connected, let $\s$ be a Frobenius endomorphism of $X$ and let $e > 1$. Let $F$ be the Shintani map of $(X,\s^e,\s)$. Then $F((x\ws)^{X_{\s^e}}) = (a^{-1}(x\ws)^{-e}a)^{X_\s}$ for $a \in X$ such that $x = aa^{-\s^{-1}}$.
\end{lemma}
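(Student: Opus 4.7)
The plan is to apply Lemma \ref{lem:shintani_explicit} with $\s_1 = \s^e$, $\s_2 = \s$, and $s = t = 1$. For $g = x\ws$, this immediately yields $a \in X^\circ$ with $x = aa^{-\s^{-1}}$ and identifies $F((x\ws)^{X_{\s^e}})$ with the $X_\s$-class of $h = a^{-1}a^{\s^{-e}}\s^e$. What remains is to rewrite this class, via the finite-group identification of Remark \ref{rem:shintani_tilde}, as the $X_\s$-class of $a^{-1}(x\ws)^{-e}a$.

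First I would unpack the conjugation: since $a^{\s^{-e}} = \s^e a \s^{-e}$, we have $h = (a^{-1}\s^e(a))\s^e$, where $\s^e(a)$ denotes the Steinberg endomorphism applied to $a$. Because $\ws_1 := \s^e|_{X_\s}$ acts trivially on $X_\s$, Remark \ref{rem:shintani_tilde} lets us identify the Shintani image with the $X_\s$-class of $a^{-1}\s^e(a) \in X_\s$, so it suffices to prove $\s^e(a) = (x\ws)^{-e}a$.

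This is a short induction. From $x = a\s(a)^{-1}$ one obtains $\s(a) = x^{-1}a$, and inductively $\s^k(a) = N_k(x)^{-1}a$ where $N_k(x) := x\s(x)\cdots\s^{k-1}(x)$. Expanding multiplication in the finite semidirect product gives $(x\ws)^k = N_k(x)\ws^k$, and since $\ws^e$ acts trivially on $X_{\s^e}$, we have $(x\ws)^{-e} = N_e(x)^{-1}$ in the finite picture. Hence $a^{-1}\s^e(a) = a^{-1}N_e(x)^{-1}a = a^{-1}(x\ws)^{-e}a$, as required. No substantive obstacle is expected: the result is an immediate consequence of Lemma \ref{lem:shintani_explicit}, and the only care required lies in the bookkeeping between the infinite-order Steinberg endomorphism $\s$ and its finite-order restriction $\ws$ via Remark \ref{rem:shintani_tilde}.
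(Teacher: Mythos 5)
Your proof is correct and follows the same route as the paper, which simply declares the lemma an ``immediate consequence'' of Lemma~\ref{lem:shintani_explicit} without spelling out the computation. You supply exactly the bookkeeping that the paper leaves implicit: specialising Lemma~\ref{lem:shintani_explicit} to $s=t=1$, invoking Remark~\ref{rem:shintani_tilde} to pass to the finite cosets, and verifying by the twisted-product identities $\s^k(a)=N_k(x)^{-1}a$ and $(x\ws)^k = N_k(x)\ws^k$ that $a^{-1}\s^e(a)=a^{-1}(x\ws)^{-e}a$.
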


Hence, what is referred to in the literature as the Shintani map of $(X,\s,e)$ is the map 
\begin{gather*}
F'\: \{ (x\ws)^{X_{\s^e}} \mid x \in X_{\s^e} \} \to \{ y^{X_{\s}} \mid y \in X_{\s} \} \qquad F'((x\ws)^{X_{\s^e}}) = (F(x\ws)^{-1})^{X_{\s}},
\end{gather*}
where $F$ is the Shintani map of $(X,\s^e,\s)$. Therefore, previous applications of Shintani descent are essentially the special case where $X$ is connected and $\s_1 = \s_2^e$. Here, up to $X$-conjugacy, the Shintani map $F$ simply involves raising to the power $-e$, which is often useful.

\begin{remark} \label{rem:shintani_power}
Our main application is to almost simple groups, where we have a finite simple group of Lie type $T$ and an automorphism $g \in \Aut(T)$, which we study by writing $T = O^{p'}(X_{\s_1})$ and $g \in X_{\s_1}\ws_2$ for a simple algebraic group $X$ and two commuting Steinberg endomorphisms $\s_1$ and $\s_2$. However, we cannot always insist that $\s_1 = \s_2^e$. For example, if $T = \PSL_n(p^f)$ for odd $f$ and $g = \g\wp$, then we choose $(X,\s_1,\s_2) = (\PSL_n, \p^f, \g\p)$, but $\s_1$ is not a power of $\s_2$ (see Example~\ref{ex:shintani} for the definitions of $\p$ and $\g$). A partial solution was found in \cite[Lemma~3.4.1]{ref:Harper17}, which allows us to work with some elements of $Tg$ but does not allow us to prove results about all elements of $Tg$, which we need to be able to do in order to prove Theorem~\ref{thm:spread}. The unified approach of this paper allows us to do this.
\end{remark}

\section{Shintani descent and finite groups of Lie type} \label{s:setup}

In this section, we systematically apply Shintani descent to almost simple groups of Lie type, and we begin by recalling some information about these groups in Sections~\ref{ss:lie_intro}--\ref{ss:lie_auto}.

\subsection{Finite groups of Lie type} \label{ss:lie_intro}

Let $\L$ be the set of finite groups $T$ such that $T = O^{p'}(X_\s)$ for a simple algebraic group $X$ of adjoint type and a Steinberg endomorphism $\s$ of $X$. Every group in $\L$ other than
\begin{gather*}
\PSL_2(2) \cong 3{:}2, \quad \PSL_2(3) \cong 2^2{:}3,    \quad \PSU_3(2)  \cong 3^2{:}Q_8,  \quad {}^2B_2(2) \cong 5{:}4 \\
\PSp_4(2) \cong S_6,   \quad G_2(2)    \cong \PGaU_3(3), \quad {}^2G_2(3) \cong \PGaL_2(8), \quad {}^2F_4(2)
\end{gather*}
is a nonabelian simple group, and we will refer to these as the \emph{finite simple groups of Lie type}. (Note that by this definition, the Tits group ${}^2F_4(2)'$ is not a finite simple group of Lie type.)

Let $X$ be a simple algebraic group of adjoint type. Let us fix some notation:
\begin{enumerate}[ ]
\item $\p$ is the Frobenius endomorphism of $X$ fixing $\F_p$
\item $\g$ is the standard involutory graph automorphism of $X  \in \{ \PSL_n, \POm_{2m}, E_6 \}$
\item $\t$ is the standard triality graph automorphism of $X = \POm_8$
\item $\rho$ is the graph-field endomorphism of $X$ fixing $\F_p$ if $(X,p) \in \{ (\PSp_4, 2), (F_4,2), (G_2,3) \}$.
\end{enumerate}
Write $\Sigma(X)$ for the group generated by $\p$, $\g$, $\t$, $\rho$ when they are defined. Note that
\[
|\g|=2, \quad [\g,\p]=1, \quad |\t|=3, \quad \t^\g=\t^{-1}, \quad [\t,\p]=1, \quad \rho^2=\p.
\] 

For $T = O^{p'}(X_\s)\in \L$, note that $\Aut(T) \cong \Inndiag(T){:}\Sigma(T)$, where $\Inndiag(T) = X_\s$ and $\Sigma(T) = \{ g|_{X_\s} \mid \text{$g \in \Sigma(X)$ and $g^\s = g$} \}$ (see \cite[Theorem~2.5.4]{ref:GorensteinLyonsSolomon98}).

\subsection{Classical groups} \label{ss:lie_classical}

We say that a simple algebraic group is a \emph{classical group} with natural module $\FF_p^n$ if it is one of
\[
\PSL_n \ (n \geq 2), \quad \PSp_n \ (n \geq 4), \quad \POm_n \ (n \geq 7),
\]
where we omit reference to the field. Similarly, $T \in \L$ is a \emph{classical group} if it is one of
\[
\PSL_n(q) \ (n \geq 2), \quad \PSU_n(q) \ (n \geq 3), \quad \PSp_n(q) \ (n \geq 4), \quad \POm^\e_n(q) \ (n \geq 7),
\]
and the natural module for $T$ is $\F_{q^u}^n$ where $u=2$ if $T = \PSU_n(q)$ and $u=1$ otherwise.

If $X$ is a simple classical algebraic group, then define $\Aut^*(X) = X{:}\Sigma^*(X)$ where
\[
\Sigma^*(X) = \left\{ 
\begin{array}{ll}
\< \p, \g \> \cong C_2 \times C_\infty & \text{if $X \in \{ \PSL_n\, (n \geq 3),\ \POm_{2m} \}$}  \\
\< \p \>      \cong C_\infty           & \text{if $X \in \{ \PSL_2,\ \PSp_{2m},\ \POm_{2m+1} \}$.}
\end{array}
\right.
\]

Similarly, for classical $T = O^{p'}(X_\s) \in \L$, define $\Sigma^*(T) = \{ g|_{X_\s} \mid \text{$g \in \Sigma^*(X)$ and $g^\s = g$} \}$ and $\Aut^*(T) = \Inndiag(T){:}\Sigma^*(T)$, so $\Aut^*(T) = \Aut(T)$ unless $T$ is $\PSp_4(2^f)$ or $\POm_8^+(q)$. 

For $T \in \L$ and $g \in \Sigma(T)$, say that $(T,g)$ is \emph{classical} if $T$ is a classical group and $g \in \Sigma^*(T)$.

\subsection{Automorphisms} \label{ss:lie_auto}

Let us now give a concrete description of the automorphism group of $T$.

For $x \in \Aut(T)$, write $\ddot{x} = Tx$, so 
\[
\text{$\Out(T) = \{ \ddot{x} \mid x \in \Aut(T) \}$ \ and \ $\Outdiag(T) = \{\ddot{x} \mid x \in \Inndiag(T) \}$.}
\] 
The group $\Out(T)$ is described in \cite[Theorem~2.5.12]{ref:GorensteinLyonsSolomon98}. If $T \neq \POm^\e_{2m}(q)$, then $\Outdiag(T)$ is cyclic and we write $\Inndiag(T) = \< T, \d\>$ if it is nontrivial. If $T = \POm^\e_{2m}(q)$, then we adopt the notation from \cite[Chapter~2]{ref:KleidmanLiebeck} (see also \cite[Section~5.2]{ref:Harper}) but we write $z$ for $r_\square r_\boxtimes$.

The following result gives a complete description of the conjugacy classes in $\Out(T)$.

\begin{theorem}\label{thm:setups}
Let $T = O^{p'}(X_{\s_1}) \in \L$ defined over $\F_q$ where $q=p^f$.
\begin{enumerate}
\item Let $x \in \Aut(T)$. Then $\< \ddot{x} \>$ is $\Out(T)$-conjugate to $\<\ddot{h}\ddot{g}\>$ for a unique $g = \s_2|_{\Inndiag(T)}$ from Table~\ref{tab:cases} and some $h \in \Inndiag(T)$.
\item Let $g = \s_2|_{\Inndiag(T)}$ for some $\s_2$ in Table~\ref{tab:cases}. Then $\Outdiag(T)$ is trivial or the $\Out(T)$-classes in $\Outdiag(T)\ddot{g}$ are $\ddot{S}\ddot{g}$ where $S$ is a set given in Table~\ref{tab:outdiag}.
\end{enumerate}
\end{theorem}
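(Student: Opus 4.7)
The plan is to work case by case through the types of finite simple groups of Lie type, using the explicit description of $\Out(T)$ given in \cite[Theorem~2.5.12]{ref:GorensteinLyonsSolomon98} as a starting point. Recall that $\Out(T) = \Outdiag(T){:}\Phi(T){:}\Gamma(T)$ (up to modification for $\POm^+_8(q)$), where $\Phi(T)$ is the cyclic group of field automorphisms generated by (the image of) $\wp$ of order $f$, and $\Gamma(T)$ is generated from $\g$, $\t$ and $\rho$ as appropriate. The Steinberg endomorphisms $\s_2$ listed in Table~\ref{tab:cases} will be formed from $\p^i$, $\g\p^i$, $\t\p^i$ and $\rho\p^i$ with $i$ ranging over a chosen set of divisors of $f$, and they will exhaust the ``types'' of outer automorphism up to $\Outdiag(T)$-multiplication and $\Out(T)$-conjugacy.

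For part (i), I will first note that every element of $\Out(T)$ lies in some coset $\Outdiag(T)\ddot{g}$ with $g$ a suitable lift in $\Sigma(T)$, so it suffices to: (a) choose, in each $\Out(T)/\Outdiag(T)$-conjugacy class of cyclic subgroups, a canonical generator $\ddot{g}$ with $g$ appearing in Table~\ref{tab:cases}; and (b) verify uniqueness of this choice. Since $\Sigma(T)/\Phi(T)$ is small and well-understood, conjugation in $\Out(T)/\Outdiag(T)$ is essentially given by the action of $\Phi(T)$ on $\Gamma(T)$ together with the action of $\Gamma(T)$ on $\Phi(T)$, and in each case standard arguments (shifting the exponent of $\p$ within its residue class modulo $f$, absorbing involutions with field automorphisms via $\g\p^i \sim \g\p^{i+f/2}$ etc.) yield a finite list of representatives. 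For part (ii), I fix $g$ as in Table~\ref{tab:cases}, and compute the orbits of $C_{\Out(T)}(\ddot{g})$, acting by conjugation, on $\Outdiag(T)\ddot{g}$: since $\Outdiag(T)$ is normal, the relevant action factors through the action of $C_{\Sigma(T)}(\ddot{g})$ on $\Outdiag(T)$, which is again read off from \cite[Theorem~2.5.12]{ref:GorensteinLyonsSolomon98}.

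For $T \neq \POm^\pm_{2m}(q)$, $\Outdiag(T) = \<\ddot{\d}\>$ is cyclic, so the orbits on $\Outdiag(T)\ddot{g}$ are determined by the scalar action of $C_{\Sigma(T)}(\ddot g)$ on $\<\ddot\d\>$; in particular, a graph automorphism inverts $\ddot\d$, a field automorphism acts via an appropriate power, and combining these determines $S$ directly. For $T = \POm^\pm_{2m}(q)$, $\Outdiag(T)$ is $C_2$ or $C_2 \times C_2$, and I will adopt the notation of \cite[Chapter~2]{ref:KleidmanLiebeck}, tracking the elements $r_\square$, $r_\boxtimes$ and $z = r_\square r_\boxtimes$ under the action of $\g$, $\t$ (when $m=4$) and $\p$; the orbits under $C_{\Sigma(T)}(\ddot g)$ give the entries of Table~\ref{tab:outdiag}.

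The main obstacle will be the orthogonal case $T = \POm^\pm_{2m}(q)$, and especially $\POm^+_8(q)$ where triality intervenes and the interaction of diagonal, graph, field and triality automorphisms on the Klein four-group $\Outdiag(T)$ is intricate; this is why $S$ can be nontrivially smaller than $\Outdiag(T)$ and is the chief source of case splits in Table~\ref{tab:outdiag}. The remaining work is bookkeeping: confirming that distinct pairs $(g,h)$ from Table~\ref{tab:cases} and Table~\ref{tab:outdiag} really do give non-conjugate cyclic subgroups (which follows since they differ either in the class of $\ddot g$ modulo $\Outdiag(T)$, or in the $C_{\Sigma(T)}(\ddot g)$-orbit on $\Outdiag(T)\ddot g$), and checking exhaustiveness against \cite[Theorem~2.5.12]{ref:GorensteinLyonsSolomon98}.
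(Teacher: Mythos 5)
The paper does not give a direct argument here: its proof of Theorem~\ref{thm:setups} is a two-sentence pointer to prior work, citing \cite[Propositions~5.2.15 and~6.2.6, Remarks~5.2.17 and~6.2.7]{ref:Harper} for the classical case and \cite[Proposition~3.15]{ref:BurnessGuralnickHarper} for the exceptional case. Your proposal is a genuinely different route: you set out to reconstruct the case analysis directly from the description of $\Out(T)$ in \cite[Theorem~2.5.12]{ref:GorensteinLyonsSolomon98}, first reducing part~(i) to a classification of $\Out(T)$-conjugacy classes of cyclic subgroups of $\Out(T)/\Outdiag(T) \cong \Phi(T){:}\Gamma(T)$, then for part~(ii) computing the orbits of $C_{\Out(T)}(\ddot g)$ on the coset $\Outdiag(T)\ddot g$. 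That is exactly the shape of the argument the cited references carry out, so the plan is sound, and spelling it out here is a legitimate (and arguably more self-contained) alternative to the paper's citation.

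That said, your proposal is a sketch, and the detail it elides is precisely where the content of Tables~\ref{tab:cases} and~\ref{tab:outdiag} lives. A few points you should be careful about if you fill this in. First, for the twisted types ($\PSU_n$, $\POm^-_{2m}$, ${}^2E_6$), $\Phi(T)$ has order $2f$ with $\wp^f$ equal to the image of the graph automorphism, so ``shifting the exponent of $\p$'' must be done modulo $2f$, not $f$, and the conditions on the parity of $f/i$ in Table~\ref{tab:cases} come from this. Second, the uniqueness claim in part~(i) concerns cyclic subgroups $\<\ddot x\>$ rather than elements; since different generators of $\<\ddot x\>$ can lie in different $\Outdiag(T)$-cosets (e.g.\ $\g\p^i$ versus $\g\p^{-i}$), you need to argue that a unique row of Table~\ref{tab:cases} is hit, typically by reading off the field part $\<\ddot x\> \cap \ddot\Phi(T)$ first to pin down $i$ and then normalising the graph part. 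Third, the orthogonal case requires tracking the $C_2\times C_2$ diagonal group (with $z = r_\square r_\boxtimes$) under the joint action of $\g$, $\p$ and, for $D_4$, $\t$; you flag this, but note that the split into $q^m \equiv \pm 1 \pmod 4$ and the further split on $p^{im}$ in Table~\ref{tab:outdiag} is exactly where the bookkeeping gets delicate, and the ``fused'' orbits $\{\d, z\d\}$ are the subtle entries. None of these is a gap in the plan, but they are the places where the proposal as written is not yet a proof.
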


\begin{proof}
If $T$ is classical, then this is a combination of \cite[Proposition~5.2.15, Remark~5.2.17, Proposition~6.2.6 and Remark~6.2.7]{ref:Harper}. If $T$ is exceptional, then this is \cite[Proposition~3.15]{ref:BurnessGuralnickHarper}.
\end{proof}

\begin{table}
\centering
\caption{Standard Shintani setups ($i$ divides $f$ and $q_0=p^i$)} \label{tab:cases}
\begin{tabular}{cccccccc}
\hline\\[-9pt]
$X$          & $\s_1$   & $T$              & $\s_2$   & $f/i$      & $T_0$             & $g_0$     & conditions               \\[5.5pt]
\hline\\[-9pt] 
$\PSL_n$     & $\p^f$   & $\PSL_n(q)$      & $\p^i$   & any        & $\PSL_n(q_0)$     & $1$       &                          \\
             &          &                  & $\g\p^i$ & even       & $\PSU_n(q_0)$     & $1$       &                          \\
             &          &                  & $\g\p^i$ & odd        & $\PSU_n(q_0)$     & $\g$      &                          \\[5.5pt]
             & $\g\p^f$ & $\PSU_n(q)$      & $\g\p^i$ & odd        & $\PSU_n(q_0)$     & $1$       &                          \\ 
             &          &                  & $\p^i$   & any        & $\PSL_n(q_0)$     & $\g$      &                          \\[5.5pt]
\hline\\[-9pt]
$\POm_{2m+1}$& $\p^f$   & $\Om_{2m+1}(q)$  & $\p^i$   & any        & $\Om_{2m+1}(q_0)$ & $1$       &                          \\[5.5pt]
\hline\\[-9pt]
$\PSp_{2m}$  & $\p^f$   & $\PSp_{2m}(q)$   & $\p^i$   & any        & $\PSp_{2m}(q_0)$  & $1$       &                          \\
             &          &                  & $\rho^i$ & any        & ${}^2B_2(q_0)$    & $1$       & $m=p=2$ \& $i$ odd       \\[5.5pt]
             & $\rho^f$ & ${}^2B_2(q)$     & $\rho^i$ & any        & ${}^2B_2(q_0)$    & $1$       & $m=p=2$ \& $f$ odd       \\[5.5pt]
\hline\\[-9pt]
$\POm_{2m}$  & $\p^f$   & $\POm^+_{2m}(q)$ & $\p^i$   & any        & $\POm^+_{2m}(q_0)$& $1$       &                          \\
             &          &                  & $\g\p^i$ & even       & $\POm^-_{2m}(q_0)$& $1$       &                          \\                    
             &          &                  & $\g\p^i$ & odd        & $\POm^-_{2m}(q_0)$& $\g$      &                          \\                    
             &          &                  & $\t\p^i$ & $3 \div e$ & ${}^3D_4(q_0)$    & $1$       & $m=4$                    \\
             &          &                  & $\t\p^i$ & $3\ndiv e$ & ${}^3D_4(q_0)$    & $\t^{-1}$ & $m=4$                    \\[5.5pt]
             & $\g\p^f$ & $\POm^-_{2m}(q)$ & $\g\p^i$ & odd        & $\POm^-_{2m}(q_0)$& $1$       &                          \\
             &          &                  & $\p^i$   & any        & $\POm^+_{2m}(q_0)$& $\g$      &                          \\[5.5pt]
             & $\t\p^f$ & ${}^3D_4(q)$     & $\t\p^i$ & $3\ndiv e$ & ${}^3D_4(q_0)$    & $1$       & $m=4$                    \\
             &          &                  & $\p^i$   & any        & $\POm^+_8(q_0)$   & $\t^{-1}$ & $m=4$                    \\[5.5pt]
\hline\\[-9pt]
$E_6$        & $\p^f$   & $E_6(q)$         & $\p^i$   & any        & $E_6(q_0)$        & $1$       &                          \\
             &          &                  & $\g\p^i$ & even       & ${}^2E_6(q_0)$    & $1$       &                          \\
             &          &                  & $\g\p^i$ & odd        & ${}^2E_6(q_0)$    & $\g$      &                          \\[5.5pt]
             & $\g\p^f$ & ${}^2E_6(q)$     & $\g\p^i$ & odd        & ${}^2E_6(q_0)$    & $1$       &                          \\
             &          &                  & $\p^i$   & any        & $E_6(q_0)$        & $\g$      &                          \\[5.5pt]
\hline\\[-9pt]
$E_7$        & $\p^f$   & $E_7(q)$         & $\p^i$   & any        & $E_7(q_0)$        & $1$       &                          \\[5.5pt]
\hline\\[-9pt]
$E_8$        & $\p^f$   & $E_8(q)$         & $\p^i$   & any        & $E_8(q_0)$        & $1$       &                          \\[5.5pt]
\hline\\[-9pt]
$F_4$        & $\p^f$   & $F_4(q)$         & $\p^i$   & any        & $F_4(q_0)$        & $1$       &                          \\
             &          &                  & $\rho^i$ & any        & ${}^2F_4(q_0)$    & $1$       & $p=2$ \& $i$ odd         \\[5.5pt]
             & $\rho^f$ & ${}^2F_4(q)$     & $\rho^i$ & any        & ${}^2F_4(q_0)$    & $1$       & $p=2$ \& $f$ odd         \\[5.5pt]
\hline\\[-9pt]
$G_2$        & $\p^f$   & $G_2(q)$         & $\p^i$   & any        & $G_2(q_0)$        & $1$       &                          \\
             &          &                  & $\rho^i$ & any        & ${}^2G_2(q_0)$    & $1$       & $p=3$ \& $i$ odd         \\[5.5pt]
             & $\rho^f$ & ${}^2G_2(q)$     & $\rho^i$ & any        & ${}^2G_2(q_0)$    & $1$       & $p=3$ \& $f$ odd         \\[5.5pt]
\hline
\end{tabular}
\end{table}

\begin{table}
\centering
\caption{$\Out(T)$-classes in $\Outdiag(T)g$} \label{tab:outdiag}
\begin{tabular}{ccccccc}
\hline\\[-9pt] 
$T$              & $\s_2$   & $f/i$      & the sets $S$                        & conditions                            \\[5.5pt]
\hline \\[-9pt] 
$\PSL_n(q)$      & $\p^i$   & any        & $\{\d^j,\d^{-j}, \dots,\d^{jp^{f-1}}, \d^{-jp^{f-1}}\}\<\d^{p^i-1}\>$ &     \\[5.5pt]                
                 & $\g\p^i$ & even       & $\{\d^j,\d^{-j}, \dots,\d^{jp^{f-1}}, \d^{-jp^{f-1}}\}\<\d^{p^i+1}\>$ &     \\[5.5pt] 
                 & $\g\p^i$ & odd        & $\<\d\>$                            & $(n,q-1)$ odd                         \\
                 &          &            & $\<\d^2\>,\ \d\<\d^2\>$             & $(n,q-1)$ even                        \\[5.5pt] 
\hline\\[-9pt]
$\PSU_n(q)$      & $\g\p^i$ & odd        & $\{\d^j, \dots, \d^{jp^{f-1}}\}\<\d^{p^i+1}\>$                        &     \\[5.5pt]  
                 & $\p^i$   & any        & $\<\d\>$                            & $(n,q+1)$ odd                         \\
                 &          &            & $\<\d^2\>,\ \d\<\d^2\>$             & $(n,q+1)$ even                        \\[5.5pt] 
\hline\\[-9pt]
$\Om_{2m+1}(q)$  & $\p^i$   & any        & $\{ 1 \},\ \{ \d\}$                 & $q$ odd                               \\[5.5pt] 
\hline\\[-9pt]
$\PSp_{2m}(q)$   & $\p^i$   & any        & $\{ 1 \},\ \{ \d\}$                 & $q$ odd                               \\[5.5pt] 
\hline\\[-9pt]
$\POm^+_{2m}(q)$ & $\p^i$   & any        & $\{ 1 \},\ \{ \d \}$                & $q^m \equiv 3 \mod{4}$                \\
                 &          &            & $\{ 1 \},\ \{ z \},\ \{ \d, z\d \}$ & $p^{im} \equiv 1 \mod{4}$             \\
                 &          &            & $\{ 1, z \},\ \{ \d, z\d \}$        & $p^{im} \equiv 3 \mod{4}$ \& $f/i$ even \\[5.5pt] 
                 & $\g\p^i$ & even       & $\{ 1 \},\ \{ \d \}$                & $q^m \equiv 3 \mod{4}$                \\
                 &          &            & $\{ 1 \},\ \{ z \},\ \{ \d, z\d \}$ & $p^{im} \equiv 3 \mod{4}$             \\
                 &          &            & $\{ 1, z \},\ \{ \d, z\d \}$        & $p^{im} \equiv 1 \mod{4}$             \\[5.5pt] 
                 & $\g\p^i$ & odd        & $\{ 1 \},\ \{ \d \}$                & $q^m \equiv 3 \mod{4}$                \\
                 &          &            & $\{ 1, z \},\ \{ \d, z\d \}$        & $q^m \equiv 1 \mod{4}$                \\[5.5pt]
                 & $\t\p^i$ & $3 \div e$ & $\{ 1, \d, z, z\d \}$               & $m=4$ \& $q$ odd                      \\[5.5pt]
\hline\\[-9pt]
$\POm^-_{2m}(q)$ & $\g\p^i$ & odd        & $\{ 1 \},\ \{ \d \}$                & $q^m \equiv 1 \mod{4}$                \\
                 &          &            & $\{ 1 \},\ \{ z \}, \{ \d, z\d \}$  & $p^m \equiv 3 \mod{4}$                \\[5.5pt] 
                 & $\p^i$   & any        & $\{ 1 \},\ \{ \d \}$                & $q^m \equiv 1 \mod{4}$                \\
                 &          &            & $\{ 1, z \},\ \{ \d, z\d \}$        & $q^m \equiv 3 \mod{4}$                \\[5.5pt] 
\hline\\[-9pt]
$E_6(q)$         & $\p^i$   & any        & $\{ 1 \},\ \{ \d, \d^2 \}$          & $p^i \equiv 1 \mod{3}$                \\
                 &          &            & $\{ 1, \d, \d^2 \}$                 & $p^i \equiv 2 \mod{3}$ \& $f/i$ even  \\[5.5pt] 
                 & $\g\p^i$ & even       & $\{ 1 \},\ \{ \d, \d^2 \}$          & $p^i \equiv 2 \mod{3}$                \\
                 &          &            & $\{ 1, \d, \d^2 \}$                 & $p^i \equiv 1 \mod{3}$                \\[5.5pt] 
                 & $\g\p^i$ & odd        & $\{ 1, \d, \d^2 \}$                 & $q \equiv 1 \mod{3}$                  \\[5.5pt] 
\hline\\[-9pt]
${}^2E_6(q)$     & $\g\p^i$ & odd        & $\{ 1 \},\ \{ \d, \d^2 \}$          & $q \equiv 2 \mod{3}$                  \\[5.5pt] 
                 & $\p^i$   & any        & $\{ 1, \d, \d^2 \}$                 & $q \equiv 2 \mod{3}$                  \\[5.5pt] 
\hline\\[-9pt]
$E_7(q)$         & $\p^i$   & any        & $\{ 1 \},\ \{ \d \}$                & $q$ odd                               \\[5.5pt] 
\hline
\end{tabular}
\end{table}

\subsection{Standard Shintani setups} \label{ss:lie_setup}

We can now explain how to study the almost simple groups of Lie type via Shintani descent.

Let $T \in \L$ and $g \in \Sigma(T)$. A \emph{Shintani setup} for $(T,g)$ is a triple $(X,\s_1,\s_2)$, where $X$ is a simple algebraic group of adjoint type and where $\s_1$ and $\s_2$ are commuting Steinberg endomorphisms of $X$ such that $T = O^{p'}(X_{\s_1})$ and $\<g\> = \<\s_2|_{X_{\s_1}}\>$. 

If $T = O^{p'}(X_{\s_1})$ and $g = \s_2|_{X_{\s_1}}$ for $(X,\s_1,\s_2)$ in Table~\ref{tab:cases}, then $(T,g)$ is a \emph{standard pair}, $(X,\s_1,\s_2)$ is \emph{the} Shintani setup of $(T,g)$ and the Shintani map $F$ of $(X,\s_1,\s_2)$ is \emph{the} Shintani map of $(T,g)$. Note that $F\:\Inndiag(T)g \to \Inndiag(T_0)g_0$ where $T_0 \in \L$ and $g_0 \in \Aut(T_0)$ are given in Table~\ref{tab:cases}. In particular, $(T,g)$ is classical if and only if $\Inndiag(T_0)$ is classical.

\subsection{Outer Shintani descent} \label{ss:lie_outer}

In this section, we explain how to restrict a Shintani map $F\:X_{\s_1}\ws_2 \to X_{\s_2}\ws_1$ to particular cosets of $O^{p'}(X_{\s_i})$. To do this we introduce an auxiliary bijection. This generalises the observation in Example~\ref{ex:shintani_isogeny} to all almost simple groups of Lie type.

For a standard pair $(T,g)$ with Shintani map $F$, define the \emph{outer Shintani map} of $(T,g)$ as
\begin{gather*}
\ddot{F}\:\{ (Txg)^{\Out(T)} \mid x \in \Inndiag(T) \} \to \{ (T_0x_0g_0)^{\Out(T_0)} \mid x_0 \in \Inndiag(T_0) \} \\
\ddot{F}((Txg)^{\Out(T)}) = (T_0x_0g_0)^{\Out(T_0)} \iff F((xg)^{\Inndiag(T)}) = (x_0g_0)^{\Inndiag(T_0)}.
\end{gather*}

\begin{theorem} \label{thm:shintani_outer}
The outer Shintani map $\ddot{F}$ is a well-defined bijection, and if $\ddot{F}((Txg)^{\Outdiag(T)}) = (T_0x_0g_0)^{\Outdiag(T_0)}$, then $C_{\Outdiag(T)}(Txg) \cong C_{\Outdiag(T_0)}(T_0x_0g_0)$.
\end{theorem}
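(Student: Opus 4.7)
The plan is to deduce Theorem~\ref{thm:shintani_outer} from an equivariance property of the underlying Shintani map $F$ of $(X,\s_1,\s_2)$. The key observation is that $F$ commutes with conjugation by any $\tilde{h} \in \Aut(X)$ satisfying $[\tilde{h},\s_1] = [\tilde{h},\s_2] = 1$: using the criterion $(x\s_2,\s_1)^X = (\s_2,y\s_1)^X$ from Theorem~\ref{thm:shintani}, conjugating both tuples by $\tilde{h}$ gives $F((x^{\tilde{h}}\ws_2)^{X_{\s_1}}) = (y^{\tilde{h}}\ws_1)^{X_{\s_2}}$ whenever $F((x\ws_2)^{X_{\s_1}}) = (y\ws_1)^{X_{\s_2}}$.

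To show $\ddot{F}$ is well-defined, I would suppose $Tx'g = (Txg)^{\ddot{h}}$ for some $\ddot{h} \in \Out(T)$. Using $\Aut(T) \cong \Inndiag(T){:}\Sigma(T)$, decompose $h = k h_0$ with $k \in \Inndiag(T)$ and $h_0 \in \Sigma(T)$; the inner-diagonal factor $k$ preserves $\Inndiag(T)$-classes and can be absorbed, reducing to $h = h_0 \in \Sigma(T)$. Since $\ddot{h}_0$ stabilises $\Inndiag(T)\ddot{g}$, an inspection of Table~\ref{tab:cases} shows that $h_0$ lifts to some $\tilde{h}_0 \in \Aut(X)$ commuting with both $\s_1$ and $\s_2$. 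The equivariance observation then shows that $F((x'g)^{\Inndiag(T)})$ is a $\tilde{h}_0$-conjugate of $F((xg)^{\Inndiag(T)})$, hence lies in the same $\Out(T_0)$-class of cosets. Injectivity follows by applying the same argument to $F^{-1}$, and surjectivity is inherited from that of $F$.

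For the centraliser isomorphism, I would identify $C_{\Outdiag(T)}(Txg) = N/T$ where $N = \{k \in X_{\s_1} \mid [k, x\ws_2] \in T\}$ is the $X_{\s_1}$-stabiliser of the coset $T(x\ws_2)$, and analogously $C_{\Outdiag(T_0)}(T_0 x_0 g_0) = N_0/T_0$. The isomorphism $C_{X_{\s_1}}(x\ws_2) \cong C_{X_{\s_2}}(y\ws_1)$ from Theorem~\ref{thm:shintani}, given by conjugation by $a \in X$ with $(x\s_2,\s_1)^a = (\s_2,y\s_1)$, extends to an isomorphism $N \to N_0$: given $k \in N$ with $[k,x\ws_2] = t \in T$, a short Lang--Steinberg adjustment of $a$ by an element of $T$ produces a conjugate lying in $N_0$, and passing to the quotients by $T$ and $T_0$ respectively gives the required isomorphism.

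The main obstacle is verifying the existence of commuting lifts $\tilde{h}_0 \in \Aut(X)$ row by row in Table~\ref{tab:cases}. This is routine whenever the relevant part of $\Sigma(X)$ is abelian, but the $\POm^+_8$ setups (where $\t^{\g} = \t^{-1}$) require a separate case check; here the condition that $\ddot{h}_0$ centralises $\ddot{g}$ in $\Out(T)$ restricts $h_0$ to a subset of $\Sigma(X)$ that does commute with the specific $\s_1, \s_2$ in play, so the argument goes through.
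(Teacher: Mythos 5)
Your equivariance observation — that $F$ commutes with conjugation by any $\tilde{h} \in \Aut(X)$ commuting with both $\s_1$ and $\s_2$ — is correct, and your remark on the $\POm^+_8$ setups (where the nonabelian $\Sigma(X)$ is saved because the relevant $h_0$ already centralise $\ddot{g}$ modulo $\Outdiag(T)$) is also on the right track. But equivariance alone does not deliver well-definedness, and there is a genuine gap.

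Computing $\ddot{F}$ of an $\Out(T)$-orbit involves picking a representative coset $Txg$ and then a representative element $xg$ of that coset; you only address the outer-conjugation freedom, not the freedom to replace $xg$ by $txg$ with $t \in T$. Concretely, $Tx'g = (Txg)^{\ddot{h}}$ only gives $x'g = t(xg)^{h_0}$ for some $t \in T$, and in general $(x'g)^{\Inndiag(T)} \neq ((xg)^{h_0})^{\Inndiag(T)}$, so the equivariance identity $F(((xg)^{\tilde{h}_0})^{\Inndiag(T)}) = F((xg)^{\Inndiag(T)})^{\tilde{h}_0}$ never reaches $F((x'g)^{\Inndiag(T)})$. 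Already the base case $\ddot{h}=1$ (so $x'=tx$) requires knowing that $F$ sends $\Inndiag(T)$-classes meeting a fixed $T$-coset of $\Inndiag(T)g$ to $\Inndiag(T_0)$-classes meeting a fixed $T_0$-coset (or at least a fixed $\Out(T_0)$-orbit of cosets). Conjugation-equivariance cannot supply this because left-multiplication by $t \in T$ is not conjugation by an automorphism. This coset-preservation is precisely what the paper extracts from Corollary~\ref{cor:shintani_isogeny}, applied case by case to isogenies $X^{\mathsf{sc}} \to X$ (and intermediate central quotients); those applications show $F$ restricts to the appropriate unions of $T$- and $T_0$-cosets, from which $\ddot{F}$ is then read off explicitly. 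Your proposal needs some replacement for this step before well-definedness is established; with it in hand, the equivariance idea does handle the outer-conjugation side cleanly.
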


Before proving Theorem~\ref{thm:shintani_outer}, let us recall some convenient notation for orthogonal groups that was introduced in \cite{ref:Harper}. For a field $K$ of characteristic $p > 0$, write
\[
\DO^\e_{2m}(K) = \left\{ 
\begin{array}{ll}
\Om^\e_{2m}(K)                                      & \text{if $p=2$} \\
\{ g \in \GO^\e_{2m}(K) \mid \det(g) = \tau(g)^m \} & \text{otherwise.}
\end{array}
\right.
\]
Here $\t\:\DO^\e_{2m}(K) \to K$ is the similarity map and $\det(g) = \pm\t(g)^m$ for all $g \in \GO^\e_{2m}(K)$ (see \cite[Lemmas~2.1.2 and~2.8.4]{ref:KleidmanLiebeck}). As recorded in \cite[(2.12)]{ref:Harper}, 
\[
\PDO^\e_{2m}(q) = \DO^\e_{2m}(q)/Z(\DO^\e_{2m}(q)) = \Inndiag(\POm^\e_{2m}(q)).
\]


\begin{proof}[Proof of Theorem~\ref{thm:shintani_outer}]
We assume that $\Inndiag(T) > T$ or $\Inndiag(T_0) > T_0$ since otherwise $\Outdiag(T)$ and $\Outdiag(T_0)$ are trivial and $\ddot{F}\:\{ \{ \ddot{g} \} \} \to \{ \{ \ddot{g} \} \}$. We consider each nontrivial case in turn, explicitly describing $\ddot{F}$ and showing that it is a bijection, but leaving the reader to verify the claim about centralisers, which is not difficult.

First assume that $p \neq 2$ and $T \in \{ \PSp_{2m}(q), \Om_{2m+1}(q), E_7(q) \}$, so $|\Inndiag(T):T|=2$. Let $X \in \{ \PSp_{2m}, \POm_{2m+1}, E_7 \}$ be the corresponding simple algebraic group of adjoint type. In this case, $g = \wp^i$ for some $i$ dividing $f$, $g_0 = 1$ and $|\Inndiag(T_0):T_0|=2$. Now the two distinct $\Out(T)$-classes in $\Outdiag(T)\ddot{\p}^i$ are $\{\ddot{\p}^i\}$ and $\{\ddot{\d}\ddot{\p}^i\}$ and the two distinct $\Out(T_0)$-classes in $\Outdiag(T_0)$ are $\{\ddot{1}\}$ and $\{\ddot{\d}_0\}$. Let $X^{\mathsf{sc}}$ be the simply connected version of $X$ and let $\pi\:X^{\mathsf{sc}} \to X$ be the isogeny defined by taking the quotient of $X^{\mathsf{sc}}$ by $Z(X^{\mathsf{sc}}) \cong C_2$. Applying Corollary~\ref{cor:shintani_isogeny} to $\pi$ establishes that
\[
F\: \{ (x\wp^i)^{\Inndiag(T)} \mid x \in \Inndiag(T) \} \to \{ x_0^{\Inndiag(T_0)} \mid x_0 \in \Inndiag(T_0) \}
\]
restricts to
\[
\{ (x\wp^i)^{\Inndiag(T)} \mid x \in T \} \to \{  x_0^{\Inndiag(T_0)} \mid x_0 \in T_0 \} \\
\]
This proves that $\ddot{F}(\{ \ddot{\p}^i \}) = \{ \ddot{1} \}$ and $\ddot{F}(\{ \ddot{\d}\p^i \}) = \{ \ddot{\d}_0 \}$. In particular, $\ddot{F}$ is a bijection.

Next assume that $p \neq 3$ and $T = E_6^\e(q)$. Write $F\:\Inndiag(T)g \to \Inndiag(T_0)g_0$ with $T_0 = E_6^{\e_0}(q_0)$. Note that $|\Inndiag(T):T|=(3,q-\e)$ and $|\Inndiag(T_0):T|=(3,q_0-\e_0)$. If $\Outdiag(T)$ and $\Outdiag(T_0)$ both consist of a unique $\Out(T)$-class, then there is nothing to prove. From Table~\ref{tab:outdiag}, we note that $\Outdiag(T)$ has multiple $\Out(T)$-classes (necessarily $\{ \ddot{g} \}$ and $\{ \ddot{\d}\ddot{g}, \ddot{\d}^2\ddot{g} \}$) if and only if $\Outdiag(T_0)$ has multiple $\Out(T_0)$-classes (necessarily $\{ \ddot{g}_0 \}$ and $\{ \ddot{\d}_0\ddot{g}_0, \ddot{\d}_0^2\ddot{g}_0 \}$) if and only if $g_0 = 1$ and $q_0 \equiv \e_0 \mod{3}$ (which implies that $q \equiv \e \mod{3}$). Therefore, assume that $g_0=1$ and $q_0 \equiv \e_0 \mod{3}$. Then applying Corollary~\ref{cor:shintani_isogeny} to the isogeny $E_6^{\mathsf{sc}} \to E_6$ with kernel $Z(E_6^{\mathsf{sc}}) \cong C_3$, shows that $F$ restricts to
\[
\{ (xg)^{\Inndiag(E_6(q))} \mid x \in E_6(q) \} \to \{ (x_0g_0)^{\Inndiag(E_6(q_0))} \mid x_0 \in E_6(q_0) \},
\]
establishing that $\ddot{F}(\{\ddot{g}\}) = \{\ddot{g}_0\}$ and $\ddot{F}(\{\ddot{\d}\ddot{g},\ddot{\d}^2\ddot{g}\}) = \{\ddot{\d}_0\ddot{g}_0,\ddot{\d}_0^2\ddot{g}_0\}$, so $\ddot{F}$ is bijective.

Now assume that $p \neq 2$ and $T = \POm^\e_{2m}(q)$. Write $F\:\Inndiag(T)g \to \Inndiag(T_0)g_0$ with $T_0 = \POm^{\e_0}_{2m}(q_0)$. Now $|\Inndiag(T):T| = (4,q^m-\e)$ and $\{\ddot{\d}\ddot{g}, \ddot{z}\ddot{\d}\ddot{g}\}$ is an $\Out(T)$-class (possibly of size one) in $\Outdiag(T)\ddot{g}$, and similarly, $|\Inndiag(T_0):T_0| = (4,q_0^m-\e_0)$ and $\{\ddot{\d}_0\ddot{g}_0, \ddot{z}_0\ddot{\d}_0\ddot{g}_0\}$ is an $\Out(T_0)$-class in $\Outdiag(T_0)\ddot{g}_0$. Applying Corollary~\ref{cor:shintani_isogeny} to the isogeny $\SO_{2m} \to \PSO_{2m}$ with kernel $Z(\SO_{2m})$ shows that $F$ restricts to
\[
\{ (xg)^{\PDO^\e_{2m}(q)} \mid x \in \PSO^\e_{2m}(q) \} \to \{ (x_0g_0)^{\PDO^{\e_0}_{2m}(q_0)} \mid x_0 \in \PSO^{\e_0}_{2m}(q_0) \}
\]
and consequently that $\ddot{F}(\{\ddot{g}, \ddot{z}\ddot{g}\}) = \{\ddot{g}_0, \ddot{z}_0\ddot{g}_0\}$ and $\ddot{F}(\{\ddot{\d}\ddot{g}, \ddot{z}\ddot{\d}\ddot{g}\}) = \{\ddot{\d}_0\ddot{g}_0, \ddot{z}_0\ddot{\d}_0\ddot{g}_0\}$. Now if $\ddot{g}$ and $\ddot{z}\ddot{g}$ are $\Out(T)$-conjugate (perhaps even equal) and $\ddot{g}_0$ are $\ddot{z}_0\ddot{g}_0$ are $\Out(T_0)$-conjugate, then the proof that $\ddot{F}$ is bijective is complete. Table~\ref{tab:outdiag} allows us to deduce that $\ddot{g}$ and $\ddot{z}\ddot{g}$ are not $\Out(T)$-conjugate if and only if $\ddot{g}_0$ are $\ddot{z}_0\ddot{g}_0$ are not $\Out(T_0)$-conjugate if and only if  $g_0 = 1$ and $q_0^m \equiv \e_0 \mod{4}$ (which implies that $q^m \equiv \e \mod{4}$). Therefore, assume that $g_0=1$ and $q_0^m \equiv \e_0 \mod{4}$. Applying Corollary~\ref{cor:shintani_isogeny} to the isogeny $\Spin_{2m} \to \PSO_{2m}$ with kernel $Z(\Spin_{2m})$ shows that $F$ restricts to the bijection
\[
\{ (xg)^{\PDO^\e_{2m}(q)} \mid x \in \POm^\e_{2m}(q) \} \to \{ (x_0g_0)^{\PDO^{\e_0}_{2m}(q_0)} \mid x_0 \in \POm^{\e_0}_{2m}(q_0) \},
\]
which implies that $\ddot{F}(\{\ddot{g}\}) = \{\ddot{g}_0\}$ and $\ddot{F}(\{\ddot{z}\ddot{g}\}) = \{\ddot{z}_0\ddot{g}_0\}$. Therefore, $\ddot{F}$ is a bijection.

Finally assume that $T = \PSL^\e_n(q)$. Write $F\:\Inndiag(T)g \to \Inndiag(T_0)g_0$ with $T_0 = \PSL^{\e_0}_n(q_0)$. Note that $|\Inndiag(T):T|=|\ddot{\d}|=(n,q-\e)$ and $|\Inndiag(T_0):T|=|\ddot{\d}_0|=(n,q_0-\e_0)$. For now assume that $g_0=1$, so $\ddot{F}\: \< \ddot{\d} \>\ddot{g} \to \< \ddot{\d}_0\>$. Let $x \in \PGL^\e_n(q)$. Then
\[
\det(F(xg)) = \det((xg)^{-e}) = \det(x)^{-\frac{q-\e}{q_0-\e_0}},
\]
so $\ddot{F}(\ddot{\d}^j\ddot{\p}^i) = \ddot{\d}_0^{-j}$ and $\ddot{F}$ is bijective (this is an easy calculation, but see also \cite[Lemmas~4.2 and~5.3]{ref:BurnessGuest13} and \cite[Lemma~6.4.2]{ref:Harper}). It remains to assume that $g_0 = \g$. From Table~\ref{tab:outdiag}, we note that $\Outdiag(T)$ has multiple $\Out(T)$-classes (necessarily $\<\ddot{\d}^2\>\ddot{g}$ and $\d\<\ddot{\d}^2\>\ddot{g}$) if and only if $\Outdiag(T_0)$ has multiple $\Out(T_0)$-classes (necessarily $\<\ddot{\d}_0^2\>\ddot{g}_0$ and $\d_0\<\ddot{\d}_0^2\>\ddot{g}_0$) if and only if $n$ is even and $q$ is odd. If $n$ is even and $q$ is odd, then applying Corollary~\ref{cor:shintani_isogeny} to the isogeny $2.\PSL_n \to \PSL_n$ with kernel $Z(2.\PSL_n) \cong C_2$ shows that $F$ restricts to
\[
\{ (xg)^{\PGL^\e_n(q)} \mid x \in \tfrac{1}{2}\PGL^\e_n(q) \} \to \{ (x_0g_0)^{\PGL^{\e_0}_n(q_0)} \mid x_0 \in \tfrac{1}{2}\PGL^{\e_0}_n(q_0) \},
\]
so $\ddot{F}(\<\ddot{\d}^2\>\ddot{g}) = \<\ddot{\d}_0^2\>\ddot{g}_0$ and $\ddot{F}(\d\<\ddot{\d}^2\>\ddot{g}) = \d_0\<\ddot{\d}_0^2\>\ddot{g}_0$, as required.
\end{proof} 

We will apply the following corollary of Theorem~\ref{thm:shintani_outer} in Proposition~\ref{prop:subspaces_shintani}.

\begin{corollary} \label{cor:shintani_outer}
Let $F\:\Inndiag(T)g \to \Inndiag(T_0)g_0$ be the Shintani map of a standard pair $(T,g)$. Let $P$ and $P_0$ be properties of elements of $\Aut(T)$ and $\Aut(T_0)$, respectively, that are preserved by conjugation. Assume that $h \in \Inndiag(T)g$ has $P$ if and only if $F(h) \in \Inndiag(T_0)g_0$ has $P_0$. Then $Tx$ contains an element with property $P$ for all $x \in \Inndiag(T)g$ if and only if $T_0x_0$ contains an element with property $P_0$ for all $x_0 \in \Inndiag(T_0)g_0$.
\end{corollary}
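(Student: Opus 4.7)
The plan is to lift $F$ to the level of cosets of $T$ using the outer Shintani map $\ddot{F}$ from Theorem~\ref{thm:shintani_outer}, which is a bijection between $\Out(T)$-classes in $\Outdiag(T)\ddot{g}$ and $\Out(T_0)$-classes in $\Outdiag(T_0)\ddot{g}_0$.

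First I would observe that, because $P$ is preserved under conjugation in $\Aut(T)$, the truth of ``$Tx$ contains an element with $P$'' depends only on the $\Out(T)$-orbit of $\ddot{x} \in \Outdiag(T)\ddot{g}$: if $\ddot{x}' = \ddot{x}^a$ with $a \in \Aut(T)$, then $Tx' = (Tx)^a$, and conjugation-invariance of $P$ gives the equivalence. The analogous observation holds for $P_0$ and $T_0$. Therefore the condition ``for all $x \in \Inndiag(T)g$, $Tx$ contains an element with $P$'' is equivalent to the condition that, for every $\Out(T)$-class $C \subseteq \Outdiag(T)\ddot{g}$, any coset $Tx$ with $\ddot{x} \in C$ contains an element with $P$, and similarly on the other side.

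The heart of the argument is the following claim: if $\ddot{F}((Txg)^{\Out(T)}) = (T_0x_0g_0)^{\Out(T_0)}$, then $Tx$ contains an element with $P$ if and only if $T_0x_0$ contains an element with $P_0$. Suppose $h \in Tx$ has $P$. By hypothesis $F(h)$ has $P_0$, and by the very definition of $\ddot{F}$ the $\Inndiag(T_0)$-class of $F(h)$ lies inside $(T_0x_0g_0)^{\Out(T_0)}$, so $T_0 \cdot F(h) = (T_0x_0)^a$ for some $a \in \Aut(T_0)$. Then $F(h)^{a^{-1}} \in T_0x_0$ still has $P_0$, by conjugation-invariance of $P_0$, as required. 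The reverse direction applies the same argument to $\ddot{F}^{-1}$ (which exists and makes sense by Theorem~\ref{thm:shintani_outer}, and the hypothesis on $P$ and $P_0$ is manifestly symmetric).

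The corollary follows at once: combining the claim with bijectivity of $\ddot{F}$, the condition on the $T$ side holds if and only if every $\Out(T)$-orbit in $\Outdiag(T)\ddot{g}$ is ``good'', and this corresponds orbit-by-orbit under $\ddot{F}$ to every $\Out(T_0)$-orbit in $\Outdiag(T_0)\ddot{g}_0$ being ``good''. I do not anticipate any real obstacle; the whole statement is a matter of careful bookkeeping. The only conceptual point worth highlighting is that ``contains an element with $P$'' is a property of $T$-cosets rather than of $\Inndiag(T)$-classes, which is precisely why the outer Shintani map $\ddot{F}$, rather than $F$ itself, is the appropriate tool.
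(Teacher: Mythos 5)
Your proposal is correct and takes essentially the same route as the paper: both reduce the statement to the observation that ``$Tx$ contains an element with $P$'' is an $\Out(T)$-invariant of the coset $Tx$, and then transport this invariant across the outer Shintani map $\ddot{F}$ of Theorem~\ref{thm:shintani_outer}, using its bijectivity to cover every coset on the other side. The only difference is cosmetic: the paper fixes a target coset, chooses a $P_0$-element $y_0$ in the corresponding $T_0$-coset, lifts it through $F^{-1}$ and invokes injectivity of $\ddot{F}$ to land back in (a conjugate of) the original coset, whereas you push a $P$-element forward through $F$ and read off membership in the correct $T_0$-coset from the definition of $\ddot{F}$; these are the same argument run in opposite directions.
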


\begin{proof}
We prove the forward implication; the reverse is very similar. Let $x \in \Inndiag(T_0)g$. By hypothesis, $T_0F(x^{\Inndiag(T)})$ contains an element $y_0$ with property $P_0$. Let $y \in \Inndiag(T)g$ such that $F(y^{\Inndiag(T)}) = y_0^{\Inndiag(T)}$. By Theorem~\ref{thm:shintani_outer}, $\ddot{F}(\ddot{y}^{\Out(T)}) = \ddot{y_0}^{\Out(T_0)} = \ddot{F}(\ddot{x}^{\Out(T)})$, that is, $y \in Tx^{\Inndiag(T)}$, so there exists an element in $Tx$ with property $P$, as required.
\end{proof}

\subsection{Maximal subgroups} \label{ss:maximal}

We conclude this section, with a fundamental theorem, combining \cite[Theorem~2]{ref:LiebeckSeitz90} and \cite[Theorem~2]{ref:LiebeckSeitz98}, that describes the maximal subgroups of the almost simple groups of Lie type.

\enlargethispage{12pt}
\begin{theorem} \label{thm:maximal}
Let $G$ be an almost simple group of Lie type with socle $T$. Write $T= O^{p'}(X_\s)$ for a simple algebraic group $X$ of adjoint type and a Steinberg endomorphism $\s$ of $X$. Let $H$ be a maximal subgroup of $G$ not containing $T$. Then $H$ is one of
\begin{enumerate}[{\rm (I)}]
\item $N_G(Y_\s \cap T)$ for a maximal closed $\s$-stable positive-dimensional subgroup $Y$ of $X$
\item $N_G(X_\a \cap T)$ for a Steinberg endomorphism $\a$ of $X$ such that $\a^k=\s$ for a prime $k$
\item a local subgroup not in (I)
\item an almost simple group not in (I) or (II)
\item the Borovik subgroup: $H \cap T = (A_5 \times A_6).2^2$ with $T = E_8(q)$ and $p \geq 7$.
\end{enumerate}
\end{theorem}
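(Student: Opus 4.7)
The plan is to assemble the statement by matching the conclusions of two existing classification results, splitting into the cases where $X$ is classical and where $X$ is exceptional.

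For $X$ exceptional, I would invoke \cite[Theorem~2]{ref:LiebeckSeitz90} directly. Their reduction theorem says that a maximal subgroup $H$ of $G$ not containing $T$ is either the normaliser of the fixed-point subgroup of a maximal positive-dimensional closed $\s$-stable subgroup of $X$ (giving case (I)); a subfield-type subgroup arising from the fixed points of a Steinberg endomorphism $\a$ of $X$ with $\a^k = \s$ for a prime $k$ (case (II)); a local subgroup not already covered by (I) (case (III)); an almost simple subgroup falling outside (I) and (II) (case (IV)); or the exceptional Borovik subgroup $(A_5 \times A_6).2^2$ inside $E_8(q)$ with $p \geq 7$ (case (V)).

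For $X$ classical, I would combine Aschbacher's subgroup structure theorem with \cite[Theorem~2]{ref:LiebeckSeitz98}. Aschbacher's theorem partitions the maximal subgroups of $G$ not containing $T$ into the geometric classes $\C_1,\dots,\C_8$ together with the class $\S$ of almost simple groups acting irreducibly. I would verify the correspondence:
\begin{enumerate}
\item each of $\C_1,\C_2,\C_3,\C_4,\C_7,\C_8$ arises as the normaliser of the fixed points of a closed $\s$-stable positive-dimensional subgroup of $X$ (namely a parabolic, a stabiliser of an orthogonal decomposition, an extension-field subgroup $\GL_{n/k}(\FF_{p^k})$, a tensor product stabiliser, a wreath product $\GL_m \wr S_t$, or a classical-type subgroup), placing it in (I);
\item $\C_5$ consists of subfield subgroups, which are precisely $N_G(X_\a \cap T)$ for $\a^k = \s$, placing it in (II);
\item $\C_6$ consists of normalisers of symplectic-type $r$-groups; these are local and, since the underlying $r$-group is finite, not of the form (I), placing it in (III);
\item members of $\S$ fall into (IV).
\end{enumerate}
The Borovik subgroup case (V) does not arise here since it occurs only for $E_8(q)$.

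The main obstacle is bookkeeping rather than mathematical difficulty: for each geometric Aschbacher class one must exhibit a closed $\s$-stable positive-dimensional subgroup $Y \leq X$ whose normaliser's fixed points realise the maximal subgroup. This identification is well-documented (see Kleidman--Liebeck \cite{ref:KleidmanLiebeck}), but requires care for $\C_3$ and $\C_7$, where the existence of $Y$ in the algebraic group is not immediately transparent from Aschbacher's finite description. Once these identifications are made, the combination of the two Liebeck--Seitz theorems yields exactly the list (I)--(V).
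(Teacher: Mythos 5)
Your overall plan — invoke Liebeck--Seitz \cite[Theorem~2]{ref:LiebeckSeitz90} for exceptional $X$ and \cite[Theorem~2]{ref:LiebeckSeitz98} for classical $X$ — is exactly what the paper does; it states the theorem as a combination of those two results without further argument. However, your classical-case detour through Aschbacher's original theorem is both unnecessary and, as written, contains a real error. It is unnecessary because \cite[Theorem~2]{ref:LiebeckSeitz98} is already phrased in the algebraic-group framework (it classifies $H$ as $N_G(Y_\s \cap T)$ for $Y$ in explicitly listed classes, plus subfield, local, and $\S$-type subgroups), so no cross-matching against the finite Aschbacher classes is required.

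The error is in your proposed identification for the KL class $\C_3$: ``$\GL_{n/k}(\FF_{p^k})$'' is not a closed subgroup of $X$ over $\overline{\F}_p$. Over an algebraically closed field there is no degree-$k$ extension to speak of, so no such $Y$ exists; this is why Table~\ref{tab:maximal_classical} has no separate extension-field entry. In fact extension-field subgroups such as $\GL_m(q^k).k < \GL_{mk}(q)$ arise from the \emph{same} algebraic subgroup $Y = \GL_m \wr S_k$ (a direct-sum decomposition stabiliser) that gives the imprimitive $\C_2$ subgroups, but with a different twist: the possible finite forms $(Y^x)_\s$ are parametrised by the $\s$-conjugacy classes in $N_X(Y^\circ)/Y^\circ \cong S_k$, and the $k$-cycle class yields the extension-field subgroup while the trivial class yields $\GL_m(q) \wr S_k$. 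Similarly KL $\C_7$ (tensor-induced) comes from the central-product normaliser $(\GL_m \circ \cdots \circ \GL_m){:}S_t$, i.e.\ the LS class $\C_4$, not a wreath product of $\GL$'s. You flagged $\C_3$ and $\C_7$ as the delicate cases, correctly — but the delicacy is precisely that the naive finite description does not transfer; this is handled inside Liebeck--Seitz, so you should cite it directly rather than reconstruct the dictionary.
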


\begin{table}
\caption{Geometrically defined subgroups of classical algebraic groups} \label{tab:maximal_classical}
\centering
\begin{tabular}{ccc}
\hline
       & structure stabilised                                   & rough description in $\GL_n$                               \\
\hline
$\C_1$ & subspace                                               & maximal parabolic                                          \\
$\C_1'$& pair of subspaces                                      & $P_{a,b}$ or $\GL_a \times \GL_b$ with $n=a+b$             \\
$\C_2$ & $V = \bigoplus_{i=1}^{k}V_i$ where $\dim{V_i}=a$       & ${\GL_a} \wr S_k$ with $n=ak$                              \\
$\C_3$ & tensor product $V=V_1 \otimes V_2$                     & $\GL_{a} \circ \GL_{b}$ with $n=ab$                        \\
$\C_4$ & $V = \bigotimes_{i=1}^{k}V_i$ where $\dim{V_i}=a$      & $(\GL_a \circ \cdots \circ \GL_a){:}S_k$ with $n=a^k$      \\
$\C_6$ & nondegenerate classical form                           & $\GSp_n$ or $\GO_n$                                        \\
\hline
\end{tabular}
\end{table}

\begin{remark} \label{rem:maximal}
Let us comment on the subgroups arising in Theorem~\ref{thm:maximal}.
\begin{enumerate}
\item Assume that $X$ is classical with natural module $\FF_p^n$, $\s \in \Aut^*(X)$ and $G \leq \Aut^*(T)$.
\begin{itemize}
\item[(I)]   The subgroups $Y < X$ are those in the union $\C_1 \cup \C_1' \cup \C_2 \cup \C_3 \cup \C_4 \cup \C_6$ of classes from \cite{ref:LiebeckSeitz98} defined as stabilisers of geometric structures on $\FF_p^n$, see Table~\ref{tab:maximal_classical}. (Be warned that $\C_i$ notation is inconsistent with the notation for classes of geometric subgroups of \emph{finite} classical groups in \cite{ref:KleidmanLiebeck}, which we will not use in this paper.)
\item[(III)] The local subgroups are the symplectic-type normalisers from \cite[Section~4.6]{ref:KleidmanLiebeck}.
\item[(IV)]  Since $H$ is not in (I) or (II), $H$ is a member of the class $\S$ defined in \cite[p.3]{ref:KleidmanLiebeck}. In particular, the action of $H$ on the natural module for $T$ is absolutely irreducible, not realisable over a proper subfield and preserves no nondefining classical forms. The subgroups $H \in \S$ are not known in general, but they are given in \cite{ref:BrayHoltRoneyDougal} when $n \leq 12$.
\end{itemize}
Thus, in this case, Theorem~\ref{thm:maximal} returns Aschbacher's subgroup structure theorem \cite{ref:Aschbacher84}. The structure, conjugacy and maximality of the subgroups in (I)--(III) is given in \cite{ref:KleidmanLiebeck} when $n > 12$ (complete information on all maximal subgroups is in \cite{ref:BrayHoltRoneyDougal} when $n \leq 12$).
\item Assume that $X$ is exceptional. 
\begin{itemize}
\item[(I)]   Here either $Y$ has maximal rank (see \cite{ref:LiebeckSaxlSeitz92}) or $Y$ is given in \cite[Table~II]{ref:LiebeckSeitz90}.
\item[(III)] The subgroups in this case are the exotic local subgroups \cite{ref:CohenLiebeckSaxlSeitz92}.
\item[(IV)]  To date, the almost simple subgroups that arise have been completely determined when $T \in \{ {}^2F_4(q), {}^2G_2(q), G_2(q) \}$ (see \cite{ref:BrayHoltRoneyDougal} and the references therein).
\end{itemize}
\item Assume that $X$ is $\PSp_4(\FF_2)$ or $\POm_8(\FF_p)$ and either $\s \not\in \Aut^*(X)$ or $G \not\leq \Aut^*(T)$. Here, $T$ is $\PSp_4(2^f)$ or $\POm^+_8(q)$ and $G$ contains a graph-field or triality automorphism, or $T$ is ${}^2B_2(2^f)$ or ${}^3D_4(q)$. In each case, the maximal subgroups of $G$ are in \cite{ref:BrayHoltRoneyDougal} (where original references are given) and one sees that these subgroups fall into the classes (I)--(IV).
\end{enumerate}
\end{remark}

Theorem~\ref{thm:maximal} gives the following for the groups that are the main focus of this paper (recall the definition of $\Sigma(X)$ from Section~\ref{ss:lie_intro}).

\begin{theorem} \label{thm:maximal_classical}
Let $(T,g)$ be a standard classical pair with Shintani setup $(X,\s_1,\s_2)$. Write $G = \< X_{\s_1}, \ws_2 \>$. Let $H$ be a maximal subgroup of $G$ not containing $T$. Then $H$ is $G$-conjugate to one of
\begin{enumerate}[{\rm (I)}]
\item $(\<Y, \ws_2 \>^x)_{\s_1}$ for some $\<\s_1,\s_2\>$-stable subgroup 
\[
Y \in \C_1 \cup \C_1' \cup \C_2 \cup \C_3 \cup \C_4 \cup \C_6
\] 
and $x \in X$ such that $[x^{-1},\s_1^{-1}] \in Y$
\item $\< X_\a, \ws_2\>$ for a Steinberg endomorphism $\a \in C_{\Sigma(X)}(\s_2)$ such that $\a^k = \s_1$ for some prime $k$
\item a symplectic-type normaliser
\item an almost simple group in $\S$.
\end{enumerate}
\end{theorem}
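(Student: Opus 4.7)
The plan is to derive this theorem as a refinement of Theorem~\ref{thm:maximal} applied to $G$ with the endomorphism $\s = \s_1$, using the extra structure supplied by the Shintani setup. Since $X$ is a simple classical algebraic group, case (V) of Theorem~\ref{thm:maximal} (the Borovik subgroup of $E_8$) is automatically ruled out. Moreover, as $(T,g)$ is a standard classical pair we have $g \in \Sigma^*(T)$ and so $G \leq \Aut^*(T)$, so Remark~\ref{rem:maximal}(i) provides the detailed description of the maximal subgroups in cases (I), (III), (IV) of Theorem~\ref{thm:maximal}. In particular, parts (III) (symplectic-type normalisers) and (IV) ($\S$-subgroups) of the present theorem are immediate.

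For part (I), Remark~\ref{rem:maximal}(i) gives $H = N_G(Y'_{\s_1} \cap T)$ for some maximal closed $\s_1$-stable $Y' \leq X$ in one of the Aschbacher classes $\C_1 \cup \C_1' \cup \C_2 \cup \C_3 \cup \C_4 \cup \C_6$. Each such class is a union of finitely many $X$-orbits indexed by discrete geometric parameters (dimensions, tensor shapes, types and discriminants of forms) that are preserved by $\<\s_1,\s_2\>$. Consequently the $X$-orbit of $Y'$ is $\<\s_1,\s_2\>$-stable, and I would apply the Lang--Steinberg theorem to $X/N_X(Y'^\circ)^\circ$ (with the induced action of $\<\s_1,\s_2\>$) to locate a $\<\s_1,\s_2\>$-stable representative $Y$ in that orbit. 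Writing $Y' = Y^x$ for $x \in X$, the condition that $Y^x$ be $\s_1$-stable reduces to $x^{-1}x^{\s_1^{-1}} \in N_X(Y)$, which, after absorbing the finite group $N_X(Y)/Y$ into the choice of $x$ (this does not change $\<Y,\ws_2\>$ up to conjugacy), is the stated condition $[x^{-1},\s_1^{-1}] \in Y$. Finally, since $\ws_2$ commutes with $\s_1$ and normalises $Y$, we obtain $H = (\<Y,\ws_2\>^x)_{\s_1}$.

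For part (II), Theorem~\ref{thm:maximal} gives $H = N_G(X_\a \cap T)$ for some Steinberg endomorphism $\a$ of $X$ with $\a^k = \s_1$ for some prime $k$. After replacing $H$ by a $G$-conjugate we may assume $\ws_2 \in H$, which forces $\s_2$ to normalise $X_\a$. By the classification of Steinberg endomorphisms of $X$ recalled in Section~\ref{ss:lie_intro}, the only way this can occur is for $\a$ to be chosen in $C_{\Sigma(X)}(\s_2)$, and then $H = \<X_\a, \ws_2\>$.

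The main obstacle will be the Lang--Steinberg manoeuvre in case (I): verifying that every geometric parameter used to separate $X$-orbits within each $\C_i$ is indeed $\<\s_1,\s_2\>$-invariant, and handling the cases where $Y$ is disconnected so that the distinction between $Y$ and $Y^\circ$ becomes material and $N_X(Y^\circ)$ permutes the cosets of $Y^\circ$ in $Y$ nontrivially. These include the orthogonal subgroups inside symplectic groups in characteristic two (cf.\ Example~\ref{ex:symplectic}) and the imprimitive classes $\C_1'$, $\C_2$ and $\C_4$, where a careful application of Lang--Steinberg at the level of $N_X(Y^\circ)/Y^\circ$ is required to produce the $\<\s_1,\s_2\>$-stable representative $Y$.
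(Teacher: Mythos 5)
The paper does not actually supply a proof of Theorem~\ref{thm:maximal_classical}: it is presented as an immediate reformulation of Theorem~\ref{thm:maximal} in the Shintani-setup language, with the detailed conjugacy and structure information delegated to Remark~\ref{rem:maximal}(i) and the references \cite{ref:KleidmanLiebeck,ref:BrayHoltRoneyDougal,ref:LiebeckSeitz98}. Your plan (reduce to Theorem~\ref{thm:maximal} with $\s = \s_1$, discard (V), then rewrite each of (I)--(IV) in the form given) is therefore the right shape of argument, and your treatment of (III) and (IV) is fine.

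However, two points are left in a state that would not survive a referee. First, in case (II) you write ``After replacing $H$ by a $G$-conjugate we may assume $\ws_2 \in H$''. What is free is that $H$ meets the coset $T\ws_2$ (since $HT = G$), not that $H$, or a conjugate of it, contains $\ws_2$ itself; not every element of $T\ws_2 \cap H$ need be $G$-conjugate to $\ws_2$. The honest route is the reverse of yours: using that the Shintani setups of Table~\ref{tab:cases} have $\s_1, \s_2 \in \Sigma(X)$ with explicit relations, exhibit, for each prime $k$ and each $X$-conjugacy class of Steinberg endomorphisms $\a$ with $\a^k = \s_1$, a representative $\a \in C_{\Sigma(X)}(\s_2)$, and then quote the conjugacy statement from \cite{ref:KleidmanLiebeck} to conclude that $H$ is $G$-conjugate to $\<X_\a, \ws_2\>$. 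Second, and more seriously, the entire content of case (I) --- producing a $\<\s_1,\s_2\>$-stable $Y$ in the correct $X$-orbit and justifying the passage from $N_G(Y_{\s_1} \cap T)$ to $(\<Y,\ws_2\>^x)_{\s_1}$, including the disconnected cases (where $N_{X_{\s_1}}(Y^\circ_{\s_1}) = N_X(Y^\circ)_{\s_1}$ is exactly the hypothesis Theorem~\ref{thm:shintani_subgroups_strong} needs) --- is deferred to a final paragraph labelled ``the main obstacle''. Flagging the hard step is not the same as carrying it out; as written the proposal is an outline, not a proof, and the Lang--Steinberg manoeuvre you describe is precisely what needs to be written down carefully (or sourced from \cite{ref:KleidmanLiebeck}, as the paper implicitly does).
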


\section{Applications of Shintani descent to maximal subgroups} \label{s:subgroups}

In this section, we apply Shintani descent to study the question of which almost simple groups contain elements with few maximal overgroups, motivated by the work on simple groups by Burness and Harper \cite{ref:BurnessHarper19}. Our aim is not to comprehensively study this question but to shed light on what Shintani descent does, and does not, allow us to quickly deduce about almost simple groups from existing information on simple groups. In particular, we will prove Theorem~\ref{thm:subgroups}.

For almost simple groups $G$, there are two natural pursuits: seeking elements contained in few maximal subgroups and elements contained in few maximal subgroups, all of which are core-free. For $s \in G$, write
\begin{equation*}
\M(G,s) = \{ H < G \mid \text{$H$ is maximal in $G$ and $s \in H$} \}.
\end{equation*} 
Then we are interested in the following two invariants
\begin{equation*}
\text{$\mu(G) = \min_{s \in G} |\M(G,s)|$ \ and \ $\mu^*(G) = \min_{s \in G^*} |\M(G,s)|$}
\end{equation*}
where $G^*$ is the set of elements of $G$ that are not contained in any proper normal subgroup. A specific motivation for $\mu^*(G)$ is that applications to generation often require core-free maximal overgroups (see \cite[Corollary~2.2]{ref:BurnessHarper19} on uniform domination, for example).

If $G$ is a nonabelian simple group, then $\mu(G) = \mu^*(G)$, and in \cite[Theorem~5]{ref:BurnessHarper19}, Burness and Harper proved that $\mu(G) \leq 3$, except for four groups of Lie type where $4 \leq \mu(G) \leq 7$. Moreover, they determined $\mu(G)$ for alternating and sporadic $G$ \cite[Theorems~3.1 and~4.1]{ref:BurnessHarper19}.

Before we study the almost simple groups of Lie type via Shintani descent, let us first observe that the methods in \cite{ref:BurnessHarper19} are sufficient to completely determine $\mu(G)$ and $\mu^*(G)$ for almost simple groups $G$ with alternating and sporadic socles; this will be of a different flavour to the rest of the paper. It will be convenient to write
\[
\mathcal{H} = \left\{ n \in \mathbb{N} \mid \text{$n = \frac{q^d-1}{q-1}$ for some prime power $q$ and integer $d \geq 2$} \right\}.
\]

\begin{proposition} \label{prop:subgroups_almost_simple}
Let $G$ be an almost simple group whose socle is alternating, sporadic or ${}^2F_4(2)'$. Then $\mu(G) \leq \mu^*(G) \leq 3$. Moreover, if $G$ is not simple, then the following hold.
\begin{enumerate}
\item If $G = S_{2m+1}$, then $\mu(G) = \mu^*(G) = 1$ witnessed by an element of shape $[m,m+1]$.
\item If $G = S_n$ for $n = 2m \geq 8$, then $\mu(G) = 2$ witnessed by an element of shape $[m-k,m+k]$, where $k=(m-1,2)$. In addition, $\mu^*(G) = 3$ witnessed by an element of shape 
\[
\left\{ 
\begin{array}{ll}
{[l-1,l,l+1]}   & \text{if $n=3l$}   \\
{[l-2,l+1,l+2]} & \text{if $n=3l+1$} \\
{[l-1,l+1,l+2]} & \text{if $n=3l+2$} \\
\end{array}
\right.
\]
unless $m$ is prime and $n \not\in \mathcal{H}$, in which case $\mu^*(G) = 2$ witnessed by an $n$-cycle.
\item Otherwise, $G$ appears in Table~\ref{tab:subgroups_almost_simple} and $\mu(G) = \mu^*(G) = k$ witnessed by $s$.
\end{enumerate}
\end{proposition}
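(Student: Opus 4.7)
The plan is to extend the arguments of Burness and Harper \cite{ref:BurnessHarper19} from simple socles to the full almost simple setting: since $\mu(G) = \mu^*(G)$ when $G$ is simple, the genuinely new content is the analysis of $G = S_n$ for $n \geq 5$ and of the index-$2$ extensions $G = T.2$ of sporadic groups $T$ and of the Tits group ${}^2F_4(2)'$.

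For the symmetric groups $S_n$ the strategy is to exhibit explicit cycle types and then enumerate their maximal overgroups via the classification of maximal subgroups of $S_n$ into intransitive ($S_a \times S_b$), imprimitive ($S_a \wr S_b$), and primitive subgroups. When $n = 2m+1$ and $s$ has cycle type $[m, m+1]$, the coprimality $(m, m+1) = 1$ kills every proper imprimitive overgroup, the unique $2$-part partition preserved by $s$ forces the only intransitive overgroup to be $S_m \times S_{m+1}$, and Jordan-type theorems on primitive groups containing long cycles rule out any proper primitive overgroup, whence $\mu(S_{2m+1}) = 1$. For $n = 2m \geq 8$ and $s$ of type $[m-k, m+k]$ with $k = (m-1, 2)$, the two cycle lengths have equal parity so $s \in A_n$; this yields the two overgroups $A_n$ and $S_{m-k} \times S_{m+k}$, and the same cycle-theoretic arguments preclude anything else. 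For $\mu^*(S_n)$ we require an odd permutation (to keep $s$ out of $A_n$): the three-cycle shapes listed in part (ii) are chosen with an odd sign, with cycle lengths sufficiently coprime to forbid imprimitive overgroups, and preserving exactly three $2$-part partitions (one for each pair of cycles that can be fused), giving the three intransitive overgroups. The exceptional case ($m$ prime and $n \notin \mathcal H$) uses an $n$-cycle, whose affine and projective primitive overgroups have degree of the form $(q^d-1)/(q-1)$, so they are excluded precisely by $n \notin \mathcal H$.

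For almost simple $G$ with sporadic socle or socle ${}^2F_4(2)'$, it suffices to examine the finitely many nonsimple extensions $T.2$. For each, the maximal subgroups are catalogued in the \textsc{Atlas} and in \cite{ref:BrayHoltRoneyDougal}, and I would loop through the conjugacy classes of $G$ with representatives $s \in G \setminus T$ (so that $s \in G^*$), computing $|\M(G,s)|$ directly from class fusion data, extracting the minimum, and populating Table~\ref{tab:subgroups_almost_simple}. The group $\mathrm{M}_{12}.2$ is singled out as the unique case where $\mu^*(G) = 3$; this follows by exhausting its outer conjugacy classes and checking that each element of $\mathrm{M}_{12}.2 \setminus \mathrm{M}_{12}$ lies in at least three core-free maximal subgroups.

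The main obstacle is the $S_n$ analysis, specifically ruling out unwanted primitive overgroups of the candidate elements. This rests on the classical Jordan--Marggraff theorem together with modern refinements due to Praeger--Saxl and Liebeck--Saxl--Seitz on primitive permutation groups containing a prescribed cycle structure. Everything else is careful but routine bookkeeping: tracking cycle parity, separating the three congruence classes of $n$ modulo $3$ in part (ii), handling the exceptional $n$-cycle case, and checking small-$n$ cases by direct computation.
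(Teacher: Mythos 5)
Your plan is essentially identical to the paper's proof: the same trichotomy of intransitive/imprimitive/primitive overgroups of the chosen cycle shapes in $S_n$, the same appeal to Marggraf's theorem for primitive overgroups and to Jones's classification of primitive groups with a cyclic regular subgroup for the $n$-cycle case, and the same direct computational check (following \cite{ref:BurnessHarper19Computations}) for the finitely many nonsimple extensions of sporadic socles and ${}^2F_4(2)'$. One small misreading worth flagging: the proposition does not assert that $\mathrm{M}_{12}.2$ is the \emph{unique} group with $\mu^*(G) = 3$ (indeed $S_6$ and generically $S_{2m}$ also attain $3$); it only singles out $\mathrm{M}_{12}.2$ as a case where the inequality $\mu^*(G) \leq 3$ is an equality, and your remark that two of the three parts in the shape $[a,b,c]$ can coincide for small $n$ (forcing a wreath product $S_a \wr S_2$ in place of $S_a \times S_a$, as happens at $n=10$) is exactly the kind of small-$n$ bookkeeping the paper performs.
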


\begin{proof}
Part~(iii) can easily be obtained by employing the computational methods from \cite{ref:BurnessHarper19} that are documented in \cite{ref:BurnessHarper19Computations}, where we describe $s$ using \textsc{Atlas} notation \cite{ref:ATLAS}. From now on we will assume that $G = S_n$ (with $n \neq 6$) and we argue as in the proof of \cite[Theorem~3.1]{ref:BurnessHarper19}. 

First assume that $n = 2m+1$ is odd and $s \in G$ has shape $[m,m+1]$. Then $s$ is contained in a unique maximal intransitive subgroup $H \cong S_m \times S_{m+1}$ and no transitive imprimitive subgroups (see \cite[Lemma~3.4]{ref:BurnessHarper19}, for example). Moreover, since a power of $s$ is an $m$-cycle, a theorem of Marggraf (see \cite[Theorem~13.5]{ref:Wielandt64}) implies that no proper primitive subgroup of $S_n$ contains $s$, noting that $s$ is odd. Therefore, $\M(G,s) = \{ H \}$ and $\mu(G) = \mu^*(G) = 1$. 

Now assume that $n \geq 8$ is even. We begin with upper bounds on $\mu(G)$ and $\mu^*(G)$. Arguing as above, if $s$ has shape $[m-k,m+k]$, then $\M(G,s) = \{ A_n,\, S_{m-k} \times S_{m+k} \}$, and if $s$ has shape $[a,b,c]$ as displayed in the statement, then $\M(G,s) = \{ S_a \times S_{n-a},\, S_b \times S_{n-b},\, S_c \times S_{n-c} \}$, except when $n=10$ where $\M(G,s) = \{ S_9,\, S_4 \times S_6,\, S_5 \wr S_2 \}$, so $\mu(G) \leq 2$ and $\mu^*(G) \leq 3$. 

We now obtain lower bounds. Let $s \in G$. If $s$ has at least three cycles, then $|\M(G,s)| \geq 3$, and if $s$ has shape $[a,b]$, then $s \in A_n$ and $s$ is contained in at least one maximal core-free subgroup of $G$ (of type $S_a \times S_b$ if $a \neq b$ and $S_a \wr S_2$ if $a=b$) so $|\M(G,s)| \geq 2$. Now assume that $s$ is an $n$-cycle. In this case, $s$ is contained in $S_2 \wr S_m$ and $S_m \wr S_2$. If $m$ is composite, say $m=ab$, then $s$ is also contained in $S_a \wr S_{2b}$, and if $n \in  \mathcal{H}$, say $n=(q^d-1)/(q-1)$, then $s$ is contained in $\PGaL_d(q)$. Therefore, $|\M(G,s)| \geq 3$, unless $m$ is prime and $n \not\in \mathcal{H}$, in which case, \cite[Theorem~3]{ref:Jones02} implies that $|\M(G,s)| = 2$. We may now conclude that $\mu(G) \geq 2$ and $\mu^*(G) \geq 3$ unless $m$ is prime and $n \not\in \mathcal{H}$, in which case $\mu^*(G)=2$, witnessed by an $n$-cycle. This completes the proof.
\end{proof}

\begin{table}
\caption{The data for Proposition~\ref{prop:subgroups_almost_simple}(iii), where $\mu(G) = \mu^*(G) = |\M(G,s)| = k$} \label{tab:subgroups_almost_simple}
\centering
\begin{tabular}{ccccccccc}
\hline
$G$      & $S_6$       & $\mathrm{M}_{10}$ & $\PGL_2(9)$  & $\mathrm{M}_{12}.2$ & $\mathrm{M}_{22}.2$ & $\mathrm{J}_2.2$ & $\mathrm{J}_3.2$ & $\mathrm{HS}.2$ \\ 
$k$      & $3$         & $1$               & $1$          & $3$                 & $2$                 & $1$              & $1$              & $2$             \\ 
$s$      & \texttt{6A} & \texttt{8A}       & \texttt{10A} & \texttt{12B}        & \texttt{14A}        & \texttt{14A}     & \texttt{34A}     & \texttt{20D}    \\ 
\hline
\end{tabular} \vspace{5.5pt}

\begin{tabular}{cccccccc}
\hline
$\mathrm{Suz}.2$ & $\mathrm{McL}.2$ & $\mathrm{He}.2$ & $\mathrm{O'N}.2$ & $\mathrm{Fi}_{22}.2$ & $\mathrm{Fi}_{24}'.2$ & $\mathrm{HN}.2$ & ${}^2F_4(2)$ \\
$1$              & $1$              & $2$             & $1$              & $2$                  & $2$                   & $1$             & $2$          \\
\texttt{28A}     & \texttt{22A}     & \texttt{24A}    & \texttt{22A}     & \texttt{42A}         & \texttt{46A}          & \texttt{42A}    & \texttt{16E} \\
\hline
\end{tabular}
\end{table}

We now turn to almost simple groups of Lie type. We begin with a proposition highlighting that, unlike for simple groups, there is no constant upper bound on $\mu(G)$.

\begin{proposition} \label{prop:subgroups_unbounded}
There is no constant $c$ such that for all almost simple groups $G$ we have $\mu(G) \leq c$.
\end{proposition}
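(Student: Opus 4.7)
The plan is, for each $c \geq 1$, to construct $G = \<\PSL_2(2^f), \wp\> \cong \PGaL_2(2^f)$ where $f$ is a squarefree integer with exactly $c$ prime factors (subject to the arithmetic condition below), and to prove $\mu(G) \geq c$. The construction rests on two families of maximal subgroups of $G$: (A) the $c$ subgroups $\<T, \wp^k\>$ for $T = \PSL_2(2^f)$ and primes $k \mid f$, each of which contains $T$ and is maximal in $G$ because $\<\wp^k\>$ has prime index in $\<\wp\>$; and (B) the $c$ subfield subgroups $M_k = \<\PSL_2(2^{f/k}), \wp\>$, each maximal of type (II) in Theorem~\ref{thm:maximal_classical}. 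Write $\omega(n)$ for the number of distinct prime divisors of $n$, so $\omega(f) = c$.

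I would verify $|\M(G, x)| \geq c$ for every $x \in G$ in two cases. If $x \in T$ is nontrivial, then every subgroup of family~(A) contains $x$, giving $c$ overgroups immediately. If $x \notin T$, write $x = t\wp^j$ with $0 < j < f$ and let $g = \gcd(j, f) < f$. By the Chinese Remainder Theorem, choose an integer $m$ with $mj \equiv g \mod{f}$ and $\gcd(m, |x|) = 1$: the congruence forces $m \equiv (j/g)^{-1} \mod{f/g}$ so $\gcd(m, f/g) = 1$ automatically, and the further constraints for primes dividing $|x|$ but not $f/g$ are independent. Then $\<x^m\> = \<x\>$, so $x$ and $x^m$ share maximal overgroups, and $x^m = t'\wp^g$ for some $t' \in T$. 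Applying Lemma~\ref{lem:shintani_subfield}(ii) to the Shintani setup $(\PSL_2, \p^f, \p^g)$ (so $e = f/g$) shows that $x^m$ lies in a $G$-conjugate of $\<\PSL_2(2^{f/k}), \wp^g\> \leq M_k$ for every prime $k \mid f/g$, contributing $\omega(f/g)$ type~(B) overgroups. Combined with $\omega(g)$ type~(A) overgroups (those $\<T, \wp^k\>$ with $k \mid g$, since $k \mid g \mid j$), and using $\gcd(g, f/g) = 1$ from squarefreeness of $f$, we obtain $\omega(g) + \omega(f/g) = \omega(f) = c$ maximal overgroups of $x$.

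The main obstacle will be securing the order hypothesis $\gcd(|F(x^m)|, f/g) = 1$ of Lemma~\ref{lem:shintani_subfield}(ii) uniformly over the proper divisors $g \mid f$. Since $|F(x^m)|$ divides $|\PSL_2(2^g)| = 2^g(2^{2g} - 1)$ and $f$ will be odd, this reduces to requiring $\ord_{p_i}(2) \nmid 2(f/p_i)$ for each prime $p_i \mid f$. By Zsygmondy's theorem, for each $n \geq 2$ with $n \neq 6$ there exists a prime $p$ with $\ord_p(2) = n$, so one can inductively choose $p_1, \dots, p_c \geq 5$ such that $\ord_{p_i}(2)$ has a prime factor outside the finite set $\{2, p_1, \dots, p_c\}$, which suffices. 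The companion hypothesis $y^X \cap X_\s = y^{X_\s}$ is routine, because the $\PSL_2(2^g)$-conjugacy classes of unipotent and semisimple elements are characterised by the unordered pair of eigenvalues in $\FF_2^\times$, which is the same invariant characterising $\PSL_2(\FF_2)$-classes.
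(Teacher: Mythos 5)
Your proposal is correct and follows essentially the same route as the paper's proof: the same construction $G = \< \PSL_2(2^f), \wp \>$ with $f$ squarefree having $c$ prime factors, the same two families of maximal overgroups (the $\<T,\wp^k\>$ and the subfield subgroups), the same reduction to $s = x\wp^g$ with $g \div f$, and the same appeal to Lemma~\ref{lem:shintani_subfield}(ii) with the coprimality count $\omega(g)+\omega(f/g)=c$. The only substantive deviation is in the number-theoretic selection of the primes dividing $f$: you invoke Zsygmondy's theorem, whereas the paper proves a bespoke sub-lemma (Lemma~\ref{lem:subgroups_unbounded}) via Dirichlet's theorem on primes in arithmetic progressions; both routes achieve the same coprimality condition, though your inductive selection of the $p_i$ needs a little care to avoid $q_i$ coinciding with some $p_j$ for $j \neq i$ (this is easily arranged, e.g.\ by choosing $q_i$ larger than $p_1,\dots,p_{i-1}$ at each step).
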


Before proving Proposition~\ref{prop:subgroups_unbounded}, let us record the following number theoretic result (the author thanks Dan Fretwell for this).

\begin{lemma} \label{lem:subgroups_unbounded}
Let $p$ be prime and let $k, m \geq 1$. Then there exist distinct primes $r_1,\dots,r_k \geq p^m$ such that for $1 \leq i \leq k$ the prime $r_i$ does not divide $p^{mr_1 \cdots r_{i-1} r_{i+1} \cdots r_k} - 1$.
\end{lemma}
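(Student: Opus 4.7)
The plan is to construct the $r_i$ as primitive prime divisors of $p^{q_i}-1$ for a carefully chosen increasing sequence of primes $q_1<q_2<\cdots<q_k$. Recall that a \emph{primitive prime divisor} of $p^n-1$ is a prime $r\mid p^n-1$ with $r\nmid p^d-1$ for all $d<n$; equivalently, $\ord_r(p)=n$. By Zsygmondy's theorem, $p^n-1$ admits a primitive prime divisor unless $(p,n)=(2,6)$, $(p,n)=(2,1)$, or $n=2$ with $p+1$ a power of $2$. None of these exceptions arises when $n$ is an odd prime $\geq 3$, which is the only case I will need.

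The construction is inductive. I would first choose $q_1$ to be any prime with $q_1>p^m$ and let $r_1$ be a primitive prime divisor of $p^{q_1}-1$. Since $r_1\equiv 1\mod{q_1}$, automatically $r_1>q_1>p^m$. Having chosen $q_1<r_1<\cdots<q_i<r_i$, I would take $q_{i+1}$ to be any prime larger than $r_i$ and let $r_{i+1}$ be a primitive prime divisor of $p^{q_{i+1}}-1$, so again $r_{i+1}>q_{i+1}>r_i$. This produces strictly interleaved sequences
\[
p^m<q_1<r_1<q_2<r_2<\cdots<q_k<r_k,
\]
in particular with the $r_i$ distinct primes all exceeding $p^m$.

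It remains to verify that $r_i\nmid p^{m\,r_1\cdots r_{i-1}r_{i+1}\cdots r_k}-1$. By primitivity, $\ord_{r_i}(p)=q_i$, so this is equivalent to $q_i\nmid m\prod_{j\neq i}r_j$. Since $q_i$ is prime, it suffices to show that $q_i\nmid m$ and that $q_i\neq r_j$ for every $j\neq i$. The first is immediate from $q_i>p^m\geq m$, and the second follows from the strict interleaving of the two sequences, since each $r_j$ is prime and either $r_j<q_i$ (when $j<i$) or $r_j>q_i$ (when $j>i$).

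The only nontrivial ingredient is the existence assertion provided by Zsygmondy's theorem; once a primitive prime divisor is available at each step, the rest of the argument is purely arithmetic bookkeeping, so I do not anticipate any real obstacle.
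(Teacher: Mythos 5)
Your proof is correct, and it takes a genuinely different route from the paper's. The paper also begins by translating the divisibility condition into $\ord_{r_i}(p) \nmid m\prod_{j\neq i}r_j$, but then proceeds by induction on $k$ using Dirichlet's theorem on primes in arithmetic progressions together with the Chinese Remainder Theorem: at each step it picks a new prime $r_k$ with $r_k\equiv 2\pmod{r_i}$ for all earlier $i$, so that $r_k-1$ is coprime to $r_1\cdots r_{k-1}$, which forces $\ord_{r_k}(p)$ to be coprime to the earlier primes. Your construction instead prescribes $\ord_{r_i}(p)$ exactly as the prime $q_i$ via Zsigmondy's theorem, and then the required non-divisibilities become transparent once you note that $q_i>p^m\geq m$ and that the interleaving $q_1<r_1<q_2<r_2<\cdots$ keeps the $q$'s disjoint from the $r$'s. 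The trade-off is which nonelementary input one invokes — Dirichlet vs.\ Zsigmondy — and your version has the small aesthetic advantage that the orders are known explicitly rather than merely constrained, which makes the final bookkeeping essentially immediate. One tiny point you should make explicit: the primitive prime divisor $r_i$ of $p^{q_i}-1$ satisfies $q_i\mid r_i-1$ (since $\ord_{r_i}(p)=q_i$ divides $r_i-1$), which is what justifies $r_i>q_i$ and hence the strict interleaving you rely on; you state $r_1\equiv 1\pmod{q_1}$ only in the base step, but the same observation is needed at every step of the induction.
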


\begin{proof}
For coprime $a$ and $b$, let $\ord_b(a)$ be the multiplicative order of $a$ modulo $b$, and note that $b$ divides $a^c-1$ if and only if $\ord_b(a)$ divides $c$. Therefore, we seek distinct primes $r_1,\dots,r_k \geq p^m$ such that $\ord_{r_i}(p)$ does not divide $mr_1 \cdots r_{i-1} r_{i+1} \cdots r_k$ for all $i$.

We proceed by induction on $k$. For $k=1$, simply fix a prime $r_1 \geq p^m$. Now assume that $k \geq 2$ and that there exist primes $p^m \leq r_1 < \dots < r_{k-1}$ where $\ord_{r_i}(p)$ does not divide the product $mr_1 \cdots r_{i-1} r_{i+1} \cdots r_{k-1}$. We claim that we may fix a prime $r_k > r_{k-1}$ such that $r_k-1$ is coprime to $r_1 \cdots r_{k-1}$. Indeed, for a positive integer $x$, we know that $x-1$ is coprime to $r_1 \dots r_{k-1}$ if the congruence $x \equiv 2 \mod{r_i}$ is satisfied for all $i$, but by the Chinese Remainder Theorem, these congruences are equivalent to $x \equiv a \mod{r_1 \cdots r_k}$ for some $a$ coprime to $r_1 \cdots r_k$ and by Dirichlet's theorem on arithmetic progression, there are infinitely many prime solutions to this latter congruence, so we may choose $r_k$ as a prime solution greater than $r_{k-1}$. Now $\ord_{r_k}(p)$ is coprime to $r_1 \cdots r_{k-1}$ since $\ord_{r_k}(p)$ divides $r_k-1$, and $\ord_{r_k}(p)$ does not divide $m$ since $r_k > p^m-1$, so we deduce that $\ord_{r_k}(p)$ does not divide $mr_1 \cdots r_{k-1}$. In addition, for $1 \leq i \leq k-1$, $\ord_{r_i}(p)$ is coprime to $r_k$ since $\ord_{r_i}(p) < r_i < r_k$, and $\ord_{r_i}(p)$ does not divide $mr_1 \cdots r_{i-1} r_{i+1} \cdots r_{k-1}$ by hypothesis, so $\ord_{r_i}(p)$ does not divide $mr_1 \cdots r_{i-1} r_{i+1} \cdots r_k$. Therefore, $\ord_{r_i}(p)$ does not divide $mr_1 \cdots r_{i-1}r_{i+1} \cdots r_k$ for all $1 \leq i \leq k$, as required. The proof is complete by induction.
\end{proof}

\begin{proof}[Proof of Proposition~\ref{prop:subgroups_unbounded}]
Fix a positive integer $k$. We will exhibit an almost simple group $G$ such that $\mu(G) \geq k$. By Lemma~\ref{lem:subgroups_unbounded}, we may fix $k$ distinct primes $r_1,\dots,r_k \geq 2^2$ such that each $r_i$ does not divide $2^{2r_1 \cdots r_{i-1} r_{i+1} \cdots r_k} - 1$. Write $f=r_1 \cdots r_k$. Let $G = \< \PSL_2(q), \wp \>$ where $q=2^f$. Note that $r_i$ does not divide $|\PGL_2(q^{1/r_i})| = q^{1/r_i}(q^{2/r_i}-1)$. 

Let $s \in G$. We claim that $|\M(G,s)| \geq k$. Replacing $s$ by another generator of $\<s\>$ if necessary, we can write $s = x\wp^i$ where $i$ divides $f$. Write $e=f/i$ and $q_0=2^i$. Let $X = \PSL_2$ and $\s = \p^i$, and let $F\:\PGL_2(q)\wp^i \to \PGL_2(q_0)$ be the Shintani map of $(X,\s^e,\s)$. Write $F(s) = y$. Note that $y^{\PGL_2} \cap \PGL_2(q_0) = y^{\PGL_2(q_0)}$. Since each $r_i$ does not divide $|\PGL_2(q^{1/r_i})|$, we deduce that $(e,|\PGL_2(q_0)|)=1$ and consequently that $(e,|y|)=1$. Therefore, by Lemma~\ref{lem:shintani_subfield}(ii), $s$ is contained in a maximal subgroup of type $\PGL_2(q^{1/r})$ for every prime divisor $r$ of $e$. Moreover, $s \in \<\PSL_2(q),\wp^r\>$ for every prime divisor $r$ of $i$. Therefore, $s$ is contained in at least $k$ maximal subgroups, as claimed.
\end{proof}

\begin{remark} \label{rem:subgroups_unbounded}
The proof of Proposition~\ref{prop:subgroups_unbounded} shows that there does not even exist a constant $c$ such that $\mu(G) \leq c$ for all almost simple groups with socle in $\{ \PSL_2(2^f) \mid f \geq 1 \}$. 
\end{remark}

Despite the negative result in Proposition~\ref{prop:subgroups_unbounded}, there do exist nonsimple almost simple groups of Lie type containing an element with a unique maximal overgroup.

\begin{proposition} \label{prop:subgroups_unique}
Let $G = \< \Om^+_{2m}(2^f), \wp^i\>$ for $m \geq 8$ and a proper divisor $i$ of $f$. Then there exists $s \in G$ such that $\M(G,s) = \{ H \} \cup \M_{\mathrm{(II)}}$ where $H$ is the stabiliser of a plus-type subspace and $\M_{\mathrm{(II)}}$ consists of subfield subgroups. Moreover, if $f$ is a prime dividing $2^k+1$ or $2^{m-k}+1$ for some $1 \leq k \leq \sqrt{2m}/4$ coprime to $m$, then $\M_{\mathrm{(II)}}$ is empty and $\mu(G) = \mu^*(G) = 1$.
\end{proposition}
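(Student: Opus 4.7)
The plan is to pull back, via Shintani descent, an element of $T_0 = \Om^+_{2m}(q_0)$ with a unique geometric-type maximal overgroup. I would take $X = \POm_{2m}$ in characteristic~$2$ with commuting Steinberg endomorphisms $\s_1 = \p^f$ and $\s_2 = \p^i$, so that $T = X_{\s_1} = \Om^+_{2m}(q)$, $T_0 = X_{\s_2} = \Om^+_{2m}(q_0)$ with $q_0 = 2^i$, and the Shintani map is $F\:T\wp^i \to T_0$ (noting $\ws_1 = \p^f|_{T_0}$ acts trivially). By the analysis underlying \cite[Theorem~5]{ref:BurnessHarper19}, for $m \geq 8$ one can choose $y \in T_0$ and an admissible $k$ such that the only $\C_1$-type maximal overgroup of $y$ in $T_0$ is a plus-type subspace stabiliser $B$ of shape $\O^+_{2k}(q_0) \times \O^+_{2m-2k}(q_0)$, and moreover $|y|$ is divisible by primitive prime divisors of $q_0^{2k}-1$ and $q_0^{2(m-k)}-1$. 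Let $s \in T\wp^i$ satisfy $F(s^T) = y^{T_0}$ and let $H \leq G$ denote the corresponding plus-type subspace stabiliser.

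Next I would run through the four classes of maximal subgroups of $G$ furnished by Theorem~\ref{thm:maximal_classical}. For class~(I), arising from closed $\<\s_1,\s_2\>$-stable positive-dimensional $Y \leq X$ in one of the Liebeck--Seitz families $\C_1, \C_1', \C_2, \C_3, \C_4, \C_6$, Theorem~\ref{thm:shintani_subgroups} gives a bijection between the $G$-conjugates of $(\<Y,\wp^i\>^r)_{\s_1}$ containing $s$ and the $T_0$-conjugates of $Y_{\s_2}^r$ containing $y$. The relevant $Y = \O_{2k} \times \O_{2m-2k}$ for $H$ is disconnected, so the full strength of Theorem~\ref{thm:shintani_subgroups} is essential. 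Since $B$ is the unique geometric-type maximal overgroup of $y$ in $T_0$, the only class-(I) overgroup of $s$ in $G$ is $H$. For class~(III) (symplectic-type local normalisers) and class~(IV) (the $\S$-class almost simple subgroups), Corollary~\ref{cor:shintani_order} gives $|s| = e\,|y|$ with $e = f/i$; the primitive prime divisors of $q_0^{2k}-1$ and $q_0^{2(m-k)}-1$ that divide $|y|$ are too large (as $k$ is small compared with $m$ in Burness--Harper's construction) to divide the order of any subgroup in these classes, so they contribute nothing to $\M(G,s)$.

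This leaves class~(II): the subfield subgroups $\<\Om^+_{2m}(2^j),\wp^i\>$ for divisors $j$ with $i \mid j \mid f$ and $f/j$ prime, which constitute $\M_{\mathrm{(II)}}$ in general. For the moreover statement, the hypothesis that $f$ is prime forces $i = 1$, so the only candidate subfield subgroup is $M = \<\Om^+_{2m}(2),\wp\>$. I would then apply Lemma~\ref{lem:shintani_subfield}(i) with $e = f$: the divisibility $f \mid 2^k+1$ or $f \mid 2^{m-k}+1$ ensures $f$ is a prime divisor of $|y|$, while the coprimality $(k,m) = 1$ together with $k \leq \sqrt{2m}/4$ forces $|y|$ to be sufficiently extremal that it does not properly divide the order of any element of $\Om^+_{2m}(2)$. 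The lemma then yields $s \notin M^g$ for all $g \in G$, so $\M_{\mathrm{(II)}}$ is empty and $\mu(G) = \mu^*(G) = 1$.

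The hardest step is the arithmetic verification in the final stage: showing that $|y|$ is not properly divided by any element order in the small subfield group $\Om^+_{2m}(2)$. This demands a careful inventory of element orders in $\Om^+_{2m}(2)$ (as products of orders in its orthogonal and unitary subfactors) and a delicate choice of $y$ via Zsigmondy-type primes, with the conditions $(k,m) = 1$ and $k \leq \sqrt{2m}/4$ in the hypothesis calibrated precisely so that the required extremality holds.
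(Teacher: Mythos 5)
Your overall strategy matches the paper: apply the Shintani map $F\colon T\wp^i \to T_0 = \Om^+_{2m}(q_0)$, choose $y \in T_0$ with a unique geometric maximal overgroup, pull back to $s \in T\wp^i$ via Theorem~\ref{thm:shintani_subgroups_strong} (the disconnected $Y$ case is indeed essential), and use Lemma~\ref{lem:shintani_subfield}(i) to kill the single candidate subfield subgroup when $f$ is prime. However, there are two substantive errors in how you execute the middle steps.

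First, your choice of $y$ is internally inconsistent. You place $y$ in a plus-type subspace stabiliser of shape $\O^+_{2k}(q_0)\times\O^+_{2m-2k}(q_0)$ inside $T_0$, while also requiring $|y|$ to be divisible by primitive prime divisors of $q_0^{2k}-1$ and $q_0^{2(m-k)}-1$. An element of $\O^+_{2d}(q_0)$ cannot have order divisible by a primitive prime divisor of $q_0^{2d}-1$: such a prime divides $q_0^{d}+1$, which occurs as a torus factor only with an odd number of minus-type blocks, i.e.\ only inside $\O^-_{2d}(q_0)$. The paper's construction places $y$ in a \emph{minus}-type stabiliser: $V_0 = U_1 \perp U_2$ with $U_1, U_2$ nondegenerate minus-type of dimensions $2k, 2m-2k$, and $y = y_1 \perp y_2$ with $|y_1| = q_0^k+1$, $|y_2| = q_0^{m-k}+1$, so $y$ is odd-order and lies in the identity component $((Y^\circ)^{t_2})_\s$ of $(Y^{t_2})_\s \cong \O^-_{2k}(q_0)\times\O^-_{2m-2k}(q_0)$. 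The plus-type stabiliser $H \cong \O^+_{2k}(q)\times\O^+_{2m-2k}(q)$ in the statement arises only \emph{after} Shintani descent: Theorem~\ref{thm:shintani_subgroups_strong}(ii) swaps the roles of the two cosets of $Y^\circ$, so a minus-type form in $T_0$ containing $y$ in its identity coset corresponds to a plus-type form in $G$ with $s$ in the outer coset $(Y_{\s^e}\setminus Y^\circ_{\s^e})\ws$. Without this type-flip, the uniqueness argument and the order arithmetic both break down.

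Second, your elimination of types~(III) and~(IV) is not sound as stated. You argue that the primitive prime divisors of $|y|$, hence of $|s| = e|y|$, are too large to divide the orders of local or $\mathcal S$-class subgroups. But the GPPS classification (which you do not cite) addresses irreducible subgroups containing such ppd-elements, and a bare divisibility claim does not by itself rule out $\mathcal S$ or the symplectic-type normalisers. The paper instead observes that, since $(|y_1|,|y_2|)=1$, a suitable power of $s$ has a $1$-eigenspace of codimension $2k < \max\{2,\sqrt{2m}/2\}$, and then invokes \cite[Theorem~7.1]{ref:GuralnickSaxl03} to conclude that any maximal overgroup of $s$ not containing $T$ lies in classes~(I) or~(II). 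This is the key tool; the ppd of $q_0^{2m-2k}-1$ is used only afterwards, via \cite{ref:GuralnickPenttilaPraegerSaxl97}, to eliminate $\C_2\cup\C_3\cup\C_4$ within class~(I). Your final remark that the extremality of $|y| = (2^k+1)(2^{m-k}+1)$ in $\Om^+_{2m}(2)$ needs verification is fair --- the paper also states this without detailed justification --- but it is a secondary point compared with the two structural errors above.
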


\begin{proof}
Fix an integer $1 \leq k \leq \sqrt{2m}/4$ coprime to $m$. Write $g = \wp^i$ and $(q_0,q) = (2^i, 2^f)$. Let $(X,\s^e,\s)$ be the Shintani setup for $(T,g)$ and let $F\:\Om^+_{2m}(q)g \to \Om^+_{2m}(q_0)$ be the Shintani map of $(T,g)$. Write $V = \F_q^{2m}$ and $V_0 = \F_{q_0}^{2m}$. In addition, write $V_0 = U_1 \perp U_2$, where $U_1$ and $U_2$ are nondegenerate minus-type subspaces of dimensions $2k$ and $2m-2k$, respectively. With respect to this decomposition, let $y = y_1 \perp y_2 \in \Om^+_{2m}(q_0)$ where $|y_1| = q_0^k+1$ and $|y_2| = q_0^{m-k}+1$. Let $s \in Tg$ satisfy $F(s) = y$.

Let $H \in \M(G,s)$. Observe $T \not\leq H$ since $G/T = \<Ts\>$. By Theorem~\ref{thm:maximal_classical}, $H$ has type (I)--(IV). Since $(|y_1|,|y_2|)=1$, a power of $s$ has a $1$-eigenspace of codimension $2k < \max\{2,\sqrt{2m}/2\}$, so \cite[Theorem~7.1]{ref:GuralnickSaxl03} implies that $H \in \M_{\mathrm{(I)}} \cup \M_{\mathrm{(II)}}$, where $\M_{\mathrm{(I)}}$ and $\M_{\mathrm{(II)}}$ are the sets of type~(I) geometric subgroups and type~(II) subfield subgroups in $\M(G,s)$.

For now assume that $H \in \M_{\mathrm{(I)}}$. That is, $H$ is $G$-conjugate to $(\<Y, \ws\>^{s_1})_{\s^e}$ for a maximal closed $\s$-stable subgroup $Y \in \C_1 \cup \C_2 \cup \C_3 \cup \C_4$ and $s_1 \in X$ such that $[s_1^{-1},\s^{-f}] \in Y$ (note that $\C_1'$ and $\C_6$ are empty for $X = \Omega_{2m}$). To determine the possibilities for $H$ we will consider the maximal type~(I) subgroups of $X_\s$ that contain $y$ and then apply Shintani descent. 

Let $H_0$ be a type~(I) subgroup of $T_0$ that contains $y$. The order of $y$ is divisible by a primitive prime divisor $r$ of $q_0^{2m-2k}-1$, so by the main theorem of \cite{ref:GuralnickPenttilaPraegerSaxl97}, $H_0 \neq Y_\s$ for $Y \in \C_2 \cup \C_3 \cup \C_4$. In addition, by Goursat's lemma (see \cite[Lemma~2.3.1]{ref:Harper} for example), $y$ is contained in a unique reducible subgroup of $X_\s$, which has type $\O^-_{2k}(q_0) \times \O^-_{2m-2k}(q_0)$. Therefore, $H_0 = (Y^{t_2})_\s$ for a closed $\s$-stable subgroup $Y$ of $X$ of type ${\O_{2k}} \times {\O_{2m-2k}}$ and an element $t_2 \in X$ such that $[t_2^{-1},\s^{-1}] \in Y \setminus Y^\circ$. Moreover, since $y$ has odd order, $y \in ((Y^\circ)^{s_2})_\s$. Therefore, by Theorem~\ref{thm:shintani_subgroups_strong}, $\M(G,s) = \{ H \} \cup \M_{\mathrm{(II)}}$ where $H = \< Y_{\s^e}, \ws\>$ of type $\O^+_{2k}(q) \times \O^+_{2m-2k}(q)$ (and $s$ is contained in the coset $(Y_{\s^e} \setminus Y^\circ_{\s^e})\ws$).

To complete the proof, assume that $f$ is a prime dividing $2^k+1$ or $2^{m-k}+1$, so $i=1$. If $H \in \M_{\mathrm{(II)}}$, then $H$ is $G$-conjugate to $\< X_\p, \wp \> = \< \Om^+_{2m}(2), \wp \>$, so Lemma~\ref{lem:shintani_subfield}(i) implies that $s \not\in H$, noting that $f$ divides $|y|$ and $|y|$ does not properly divide the order of any element of $\Om^+_{2m}(2)$. Therefore, $\M_{\mathrm{(II)}}$ is empty, and $\mu^*(G) = \mu(G) = 1$, as required.
\end{proof}

Observe that Theorem~\ref{thm:subgroups} is a combination of Propositions~\ref{prop:subgroups_almost_simple}, \ref{prop:subgroups_unbounded} and \ref{prop:subgroups_unique}.

We conclude by discussing exceptional groups. If $T$ is a finite simple exceptional group of Lie type, then apart from a few small exceptions, Weigel \cite[Section~3]{ref:Weigel92} identifies an element $s \in T$ contained in a unique maximal subgroup, unless $T$ is $F_4(2^f)$ or $G_2(3^f)$, when $s$ has exactly two (isomorphic) maximal overgroups (see \cite[Theorem~5.1]{ref:BurnessHarper19} for a precise statement). The following example indicates what we can and cannot conclude for almost simple exceptional groups, highlighting that subfield subgroups are the principle obstacle.

\begin{example} \label{ex:subgroups_exceptional}
Let $G$ be an almost simple group with socle $T = E_8(q)$. Then $G = \< T, g \>$ where $g = \wp^i$ where $i$ divides $f$. Let $(X,\s^e,\s)$ be a Shintani setup for $(T,g)$ with Shintani map $F\:E_8(q)g \to E_8(q_0)$. Let $y \in T_0 = E_8(q_0)$ generate a maximal torus of order $q_0^8+q_0^7-q_0^5-q_0^4-q_0^3+q_0+1$. Then Weigel proves in \cite[Section~3(j)]{ref:Weigel92} that $N_{T_0}(\<y\>) = \<y\>.C_{30}$ is the unique maximal overgroup of $y$ in $T_0$.

Let $t \in T$ satisfy $F(tg) = y$ and let $H \in \M(G,tg)$. According to Theorem~\ref{thm:maximal}, $H$ has type~(I)--(V). By \cite[Proposition~5.1]{ref:BurnessGuralnickHarper}, $H$ does not have type~(IV), and since $|y|$ does not divide the order of the three possible subgroups of types~(III) or~(V), so $H \in \M_{\mathrm{(I)}} \cup \M_{\mathrm{(II)}}$, where the subgroups in $\M_{\mathrm{(I)}}$ and $\M_{\mathrm{(II)}}$ have type~(I) and (II), respectively.

First assume that $H \in \M_{\mathrm{(I)}}$. Then $H$ is $G$-conjugate to $(\<Y, \ws\>^{s_1})_{\s^e}$ for a maximal closed $\s$-stable positive-dimensional subgroup $Y$ of $X$ and $s_1 \in X$ such that $[s_1^{-1},\s^{-e}] \in Y$. Since $\<y\>{:}C_{30}$ is the unique maximal overgroup of $y$, by Theorem~\ref{thm:shintani_subgroups}, $|\M_{\mathrm{(I)}}|=1$.

Therefore, $t\ws$ is contained in a unique maximal subgroup if and only if it is contained in no type~(II) subgroups, that is, $G$-conjugates of $N_G(E_8(q^{1/k}))$ for some prime divisor $k$ of $f$. Let us simply demonstrate that this may or may not be the case, by considering two different possibilities for $e$. On the one hand, if $e$ is coprime to $|y|$, then since $y^{E_8} \cap E_8(q_0) = y^{E_8(q_0)}$, by Lemma~\ref{lem:shintani_subfield}(ii), $t\ws$ is contained in a subgroup of type $E_8(q^{1/k})$ for every prime divisor $k$ of $e$, so $t\ws$ is not contained in a unique maximal subgroup. On the other hand, if $e=f$ is a prime divisor of $|y|$, then the only possibilities for $H$ are subgroups of type $X_\s = E_8(p)$, and since $|y|$ does not properly divide the order of any element of $E_8(p)$, by Lemma~\ref{lem:shintani_subfield}(i), $t\ws$ is not contained in any such subgroup, so $|\M(G,t\ws)|=1$ and $\mu(G)=\mu^*(G)=1$.
\end{example}

\section{Asymptotics of spread and uniform spread} \label{s:spread}

In this final section, we prove Theorem~\ref{thm:spread} on the asymptotic behaviour of the spread and uniform spread of almost simple groups (recall the definition of these invariants from the introduction).

\subsection{Spread and subspace stabilisers} \label{ss:subspaces_spread}

Let $T$ be a finite simple classical group with natural module $V = \F_{q^u}^n$ where $q=p^f$ and where $u=2$ if $T = \PSU_n(q)$ and $u=1$ otherwise. Recall the definition of $\Aut^*(T) \leq \Aut(T)$ from Section~\ref{ss:lie_classical}, noting that $\Aut^*(T) = \Aut(T)$ unless $T$ is $\PSp_4(2^f)$ or $\POm^+_8(q)$. In this section, we highlight a connection between the spread of $T \leq G \leq \Aut^*(T)$ and the subspaces of $V$ stabilised by elements of $G$. This is captured by Corollary~\ref{cor:subspaces_spread}. 

It is clear what we mean by an element of $\Inndiag(T) \leq \PGL(V)$ stabilising a subspace of $V$, but we will require a more general definition for arbitrary elements of $\Aut^*(T)$. 

\begin{definition}
Let $x \in \Aut^*(T)$.
\begin{enumerate}
\item If $x \in \PGaL(V)$, then $x$ \emph{stabilises} a $k$-space if it normalises the stabiliser of a $k$-space in $T$.
\item If $T = \PSL_n(q)$ and $x \in \Aut(T) \setminus \PGaL(V)$, then $x$ \emph{stabilises} a $k$-space if it normalises a subgroup of $T$ of type $P_{k,n-k}$ or $\GL_k(q) \times \GL_{n-k}(q)$.
\item We say that $x$ is \emph{irreducible} on $V$ if it stabilises no proper nonzero subspaces of $V$.
\end{enumerate}
\end{definition}

The following geometric observation will be useful.

\begin{lemma} \label{lem:embedding}
Let $U$ and $V$ be two symplectic, orthogonal or unitary spaces. Assume that $V$ is nondegenerate and write $U = U_1 \oplus U_2$ where $U_1$ is nondegenerate and $U_2 = U \cap U^\perp$. Then $U$ is isometric to a subspace of $V$ if and only if $U_1$ is isometric to a subspace $V$, say $W_1$, and $\dim{U_2}$ is at most the Witt index of $W_1^\perp$.
\end{lemma}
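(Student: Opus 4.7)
The plan is to exploit the orthogonal decomposition $U = U_1 \perp U_2$ (genuinely orthogonal, since $U_2 \subseteq U^\perp$) and match it with a parallel decomposition inside $V$. The forward direction reads the required data off a hypothetical isometric embedding, while the reverse direction constructs such an embedding by splicing the given embedding of $U_1$ with a suitable totally singular subspace of $W_1^\perp$.

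For the forward direction, I would suppose $\psi\:U \to V$ is an isometric embedding and set $W_i = \psi(U_i)$. Since isometry preserves nondegeneracy, $W_1$ exhibits $U_1$ as a nondegenerate subspace of $V$. The key observation is that $W_2 = \psi(U_2)$ is the radical of the form on $W := \psi(U)$ inherited from $V$, so $W_2 \subseteq W^\perp \subseteq W_1^\perp$ (perp taken in $V$), and at the same time $W_2$ is totally singular for the ambient form. Hence $\dim U_2 = \dim W_2$ is at most the Witt index of $W_1^\perp$.

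For the reverse direction, let $W_1 \subseteq V$ be an isometric copy of $U_1$, and use its nondegeneracy to write $V = W_1 \perp W_1^\perp$. The Witt-index hypothesis supplies a totally singular subspace $W_2 \subseteq W_1^\perp$ of dimension $\dim U_2$, and I would let $\phi\:U \to W := W_1 \oplus W_2$ be any linear bijection that restricts to the given isometry $U_1 \to W_1$ and to some linear bijection $U_2 \to W_2$. To check $\phi$ is an isometry it suffices to verify the three blocks: on $U_1 \times U_1$ it holds by construction; on $U_1 \times U_2$ both sides vanish since $U_2 \subseteq U^\perp$ and $W_2 \subseteq W_1^\perp$; and on $U_2 \times U_2$ both sides vanish since $U_2$ and $W_2$ are totally singular.

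The main technical care is needed in the orthogonal case in characteristic two, where one must distinguish between $U \cap U^\perp$ (the radical of the polarised bilinear form) and the stronger property of being totally singular for the underlying quadratic form $Q$. I expect this to be handled by reading ``nondegenerate'' in the hypothesis as meaning that the quadratic form on $U_1$ has trivial radical; this forces $U_2$ itself to be totally singular for $Q$, and the argument above then carries over without modification to all three types of form.
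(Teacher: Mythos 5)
Your proof takes the same approach as the paper: the forward direction reads $W_1=\psi(U_1)$ and $W_2 = W \cap W^\perp$ off an embedding $\psi$, while the reverse direction splices $W_1$ with a totally singular $W_2 \subseteq W_1^\perp$ of the right dimension; you simply spell out the block-by-block verification that the paper leaves implicit. On that score the argument is correct and matches.

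The characteristic-two caveat you raise is a real subtlety, and it is good that you noticed it, but your proposed resolution is wrong. Nondegeneracy of $U_1$ does \emph{not} force $U_2 = U \cap U^\perp$ to be totally singular for $Q$: take $U = \F_2^3$ with $Q(x,y,z) = xy + z^2$ and $U_1 = \{(x,y,0)\}$; then $U_1$ is a nondegenerate hyperbolic plane while $U_2 = \langle(0,0,1)\rangle$ lies in the bilinear radical and carries the nonsingular vector $(0,0,1)$. Your forward-direction claim that ``$W_2$ is totally singular for the ambient form'' fails for exactly the same reason, since $\psi$ carries that nonsingular radical vector to a nonsingular vector of $W \cap W^\perp$. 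What is actually needed is the hypothesis that $U_2$ be totally singular for $Q$; this is automatic in the symplectic case, the unitary case, and the odd-characteristic orthogonal case (where $B(v,v)=2Q(v)$), and it is only in even-characteristic orthogonal spaces that it is an extra assumption. The paper's proof silently uses the same hypothesis when it asserts that $W_1 \oplus W_2$ with $W_2$ totally singular is isometric to $U$, so the paper does not resolve the point either; but the applications in Corollary~\ref{cor:embedding} and Proposition~\ref{prop:subspaces_spread} only require the (clearly correct) reverse implication with $U_2$ totally singular, so the gap is harmless there. The point to take away is that ``$U_1$ nondegenerate'' alone does not rescue the argument, and you should either add the totally-singular hypothesis on $U_2$ explicitly or restrict the forward direction to the cases where it holds automatically.
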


\begin{proof}
If $U_1$ is isometric to a subspace $V$, say $W_1$, and $\dim{U_2}$ is at most the Witt index of $W_1^\perp$, then $W_1 \oplus W_2$ is isometric to $U$ for any totally singular subspace $W_2$ of $W_1^\perp$ of dimension $\dim{U_2}$. Conversely, if $U = U_1 \oplus U_2$ is isometric to a subspace $W = W_1 \oplus W_2 \leq V$, where $W_i$ is isometric to $U_i$, then $V = W_1 \oplus W_1^{\perp}$, since $W_1$ is nondegenerate, and $W_2$ is a totally singular subspace of $W_1^\perp$, since $W_2 = W \cap W^\perp$, so the Witt index of $W_1^\perp$ is at least $\dim{W_2}$. 
\end{proof}

A straightforward application of Lemma~\ref{lem:embedding} gives this corollary. 

\begin{corollary} \label{cor:embedding}
Let $U$ and $V$ be two symplectic, orthogonal or unitary spaces. If $V$ is a nondegenerate $2m$-space, of plus-type if orthogonal, and $\dim{U} \leq m$, then $U$ is isometric to a subspace of $V$.
\end{corollary}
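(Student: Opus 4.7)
My plan is to reduce the statement directly to Lemma~\ref{lem:embedding}. Write $U = U_1 \oplus U_2$ with $U_1$ nondegenerate of dimension $r$ and $U_2 = U \cap U^\perp$ the radical, so that $\dim U_2 = \dim U - r \leq m - r$. By the lemma, it then suffices to exhibit an isometric embedding of $U_1$ into $V$ as a subspace $W_1$ and to verify that the Witt index of $W_1^\perp$ is at least $\dim U_2$, hence at least $m - r$.

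For the embedding of $U_1$, I would use the fact that in each of the three cases, $V$ decomposes as an orthogonal sum of $m$ hyperbolic planes. In the symplectic and unitary cases a nondegenerate form space of dimension at most $2m$ is determined up to isometry by its dimension (in the symplectic case $r$ is even), so the embedding is immediate. In the orthogonal case I would decompose $U_1 = T \perp A$ with $T$ hyperbolic and $A$ anisotropic of dimension at most $2$, embed $T$ in $\dim T / 2$ hyperbolic planes of $V$, and embed $A$ in the next one or two: this uses that a hyperbolic plane $\langle e, f\rangle$ realises any $1$-dimensional anisotropic form via a vector $e + \alpha f$ of prescribed nonzero norm $\alpha$, and that an orthogonal sum of two hyperbolic planes realises both isometry classes of nondegenerate anisotropic $2$-spaces.

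To control the Witt index of $W_1^\perp$, observe that $W_1^\perp$ is nondegenerate of dimension $2m - r$. In the symplectic case its Witt index equals $m - r/2$; in the unitary case it equals $\lfloor (2m-r)/2 \rfloor \geq m - (r+1)/2$; and in the orthogonal case the identity $\mathrm{WI}(V) = \mathrm{WI}(W_1) + \mathrm{WI}(W_1^\perp) = m$ combined with the trivial bound $\mathrm{WI}(W_1) \leq r/2$ gives $\mathrm{WI}(W_1^\perp) \geq m - r/2$. In every case this lower bound is at least $m - r \geq \dim U_2$, and Lemma~\ref{lem:embedding} then provides the required isometric embedding of $U$ into $V$. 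The only step that is not entirely routine is the embedding of an anisotropic summand in the orthogonal case, which comes down to the standard realisation of both plus- and minus-type nondegenerate $2$-spaces inside a sum of two hyperbolic planes.
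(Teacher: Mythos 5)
Your overall strategy is the intended one: decompose $U = U_1 \oplus U_2$ with $U_1$ nondegenerate and $U_2$ the radical, embed $U_1$ isometrically as $W_1 \leq V$, bound the Witt index of $W_1^\perp$ from below by $\dim U_2$, and invoke Lemma~\ref{lem:embedding}. The embedding of $U_1$ into $V$ is correctly argued in all three cases, and the final conclusion is right. However, there is a genuine error in the orthogonal case of your Witt index bound.

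You assert the identity $\mathrm{WI}(V) = \mathrm{WI}(W_1) + \mathrm{WI}(W_1^\perp)$ for nondegenerate orthogonal complements. This is false. Witt index is not additive across orthogonal sums: writing each summand as (hyperbolic part) $\perp$ (anisotropic kernel), the sum of the two anisotropic kernels can itself be isotropic, so in general one only has $\mathrm{WI}(V) \geq \mathrm{WI}(W_1) + \mathrm{WI}(W_1^\perp)$, which is an inequality in the \emph{wrong} direction for your purposes. Concretely, inside a plus-type $4$-space over $\F_q$ one can take $W_1$ and $W_1^\perp$ both to be minus-type planes; then $\mathrm{WI}(W_1) = \mathrm{WI}(W_1^\perp) = 0$ while $\mathrm{WI}(V) = 2$. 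Consequently your derived bound $\mathrm{WI}(W_1^\perp) \geq m - r/2$ also fails: if $r$ is odd, $W_1^\perp$ has odd dimension $2m-r$ and Witt index $m - (r+1)/2 < m - r/2$, and if $W_1$ is minus-type of even dimension $r$, then $W_1^\perp$ is minus-type with Witt index $m - r/2 - 1 < m - r/2$.

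Fortunately, the bound you actually need, namely $\mathrm{WI}(W_1^\perp) \geq m - r$, is still true and has a direct proof that avoids any additivity claim: a nondegenerate orthogonal $\F_q$-space of dimension $d$ has anisotropic kernel of dimension at most $2$, so its Witt index is at least $\lceil d/2 \rceil - 1$. Applying this with $d = 2m - r$ gives $\mathrm{WI}(W_1^\perp) \geq m - \lceil r/2 \rceil - [r \text{ even}] \geq m - r$ for all $r \geq 1$, and the case $r = 0$ is trivial since then $W_1^\perp = V$ has Witt index $m$. With this correction, the rest of your argument goes through and matches the paper's "straightforward application" of Lemma~\ref{lem:embedding}.
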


\begin{proposition} \label{prop:subspaces_spread}
Let $T \neq \PSL_n(q)$ be a simple classical group with natural module $V = \F_{q^u}^n$ and let $T \leq G \leq \Aut^*(T)$. Let $1 < d < n/2$. Then there exists a subset $S \subseteq T$ of size $|S| < q^{4ud}$ such that for all elements $y \in G$ that stabilise a $k$-space of $V$ for some $1 \leq k < d$, there exists $x \in S$ such that $\<x, y\> \neq G$. 
\end{proposition}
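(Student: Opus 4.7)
The plan is to construct $S \subseteq T$ explicitly so that for every $k$-space $W$ of $V$ with $1 \leq k < d$ some $x \in S$ fixes pointwise a subspace of $V$ containing $W$. Then $x$ and $y$ both normalise the stabiliser of $W$ in $T$, which is a proper non-normal subgroup (since $T$ acts irreducibly on $V$, so $W$ is not $T$-invariant), and hence $\<x,y\>$ is contained in a proper subgroup of $G$. Throughout I identify $V$ with its dual via the defining form of $T$, so $U^\perp$ has codimension $\dim U$ in $V$.

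For $T = \PSp_n(q)$, I fix any $d$-subspace $D \leq V$ and for each nonzero $v \in D$ let $x_v \in T$ be the symplectic transvection at $v$, which fixes $v^\perp$ pointwise; then $S = \{x_v : v \in D \setminus \{0\}\}$ has $|S| = q^d - 1 < q^{4ud}$. If $y$ stabilises a $k$-space $W$ with $k < d$, the dimension count
\[
\dim(D \cap W^\perp) \geq \dim D + \dim W^\perp - n = d + (n-k) - n \geq 1
\]
produces a nonzero $v \in D \cap W^\perp$ with $W \subseteq v^\perp$, whence $x_v$ stabilises $W$. For $T = \PSU_n(q)$ I choose $D$ totally isotropic of dimension $d$ (possible since the Witt index $\lfloor n/2 \rfloor \geq d$); every nonzero $v \in D$ is isotropic, and so admits a unitary transvection in $\SU_n(q) \leq T$ fixing $v^\perp$ pointwise. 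The argument then proceeds as above, with $|S| = q^{2d}-1 < q^{8d} = q^{4ud}$.

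For $T = \POm_n^\e(q)$, reflections and other hyperplane-fixing elements need not lie in $\Om$, so I instead fix a $(d+1)$-subspace $D \leq V$ and, for each $2$-subspace $D_0 \leq D$, choose $x_{D_0} \in T$ fixing $D_0^\perp$ pointwise—available as a product of two commuting reflections when $\Om(D_0) \cong \Om_2^\pm(q)$ is nontrivial, or as an element of the unipotent radical of the parabolic stabilising $D_0$ when $D_0$ is totally singular. Taking $S$ to be the resulting family gives $|S| < q^{2(d+1)} < q^{4ud}$, and the analogous count $\dim(D \cap W^\perp) \geq (d+1)+(n-k)-n \geq 2$ yields a $2$-subspace $D_0 \leq D \cap W^\perp$ with $W \subseteq D_0^\perp$.

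The main obstacle is the final step in the orthogonal case: guaranteeing that the $2$-subspace $D_0 \leq D \cap W^\perp$ has a type for which $\Om$ contains a nontrivial pointwise stabiliser of $D_0^\perp$. For example, when $q=2$ the group $\Om_2^+(2) = 1$ is trivial, so plus-type nondegenerate $D_0$ is useless there. A fix is to enlarge $\dim D$ to $d+2$, so that $\dim(D \cap W^\perp) \geq 3$ forces at least one $2$-subspace of a favourable type (nondegenerate minus, or totally singular, for which the pointwise stabiliser of the perpendicular is always nontrivial), or equivalently to augment $S$ by pointwise stabilisers of well-chosen codimension-$3$ subspaces. Either refinement enlarges $|S|$ only by a factor polynomial in $q^d$, keeping it comfortably below $q^{4ud}$, and the common-subspace argument concludes uniformly across all non-linear classical types—which is precisely why $\PSL_n(q)$, whose duality identifies $k$-space and $(n-k)$-space stabilisers only via the graph automorphism, is excluded here.
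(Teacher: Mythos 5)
Your construction is correct and takes a genuinely different route from the paper, so let me compare. The paper fixes a single nondegenerate $2$-space $U_1$ of a prescribed isometry type (minus-type in the orthogonal case), sets $U = U_1 \perp U_2$ with $U_2$ a $(2d-2)$-dimensional plus-type space, and lets $S$ consist of one nontrivial element centralising $V_i^\perp$ for each $2$-space $V_i \leq U$ isometric to $U_1$; the embedding lemma (Corollary~5.4) then guarantees that the projection $W_U$ of $W$ to $U$, having dimension $< d$, lies in $V_i^\perp$ for some $i$, because $U_2$ realises every isometry type in dimension $< d$ orthogonally to $U_1$. This handles symplectic, unitary and orthogonal groups with a single argument and a single element type (a ``rotation'' of order $(q+1)/(2,q+1)$ on a minus-type $2$-space). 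You instead fix a small subspace $D$ and take transvections (symplectic, unitary) or products of transvections and Siegel elements (orthogonal) at vectors or $2$-subspaces of $D$, obtaining a smaller $S$ and a more elementary argument for the symplectic and unitary cases, at the cost of a type-by-type analysis.

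The only genuine soft spot is the orthogonal case, where you correctly observe that for certain $2$-subspaces $D_0$ (e.g.\ nondegenerate plus-type over $\F_2$ or $\F_3$) the group $\Om(V)$ has no nontrivial element acting trivially on $D_0^\perp$, and you propose the fix of taking $\dim D = d+2$ so that $\dim(D \cap W^\perp) \geq 3$, claiming every $3$-space contains a $2$-subspace that is minus-type or totally singular. That claim is true but is not proved in your write-up, and it is the real content of the orthogonal case: one must check, say, a $3$-space $E$ whose bilinear radical is a nonsingular line $\<r\>$ with plus-type complement, where the only favourable $2$-subspaces $\<e + ar, f + br\>$ are the graphs with $\mathrm{Tr}(a^2 b^2 Q(r)^2) = 1$ (in characteristic $2$; the odd-characteristic computation is a discriminant condition), and these exist because the trace map is surjective. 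The paper avoids this verification entirely by fixing the isometry type of the ``active'' $2$-space in advance and pushing the existence question onto the Witt-index computation in Corollary~5.4, which is why its construction is cleaner even though $|S|$ is larger. If you flesh out the lemma that a $3$-dimensional subspace of a nondegenerate orthogonal space always contains a minus-type or totally singular $2$-subspace, your argument is complete and constitutes a valid alternative proof.
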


\begin{proof}
Write $V = U \perp U^\perp$ where $U = U_1 \perp U_2$ for a nondegenerate $2$-space $U_1$ and a nondegenerate $(2d-2)$-space $U_2$. If $T$ is orthogonal, then assume that $U_1$ is minus-type and $U_2$ is plus-type. Let $V_1,\dots,V_s$ be the subspaces of $U$ isometric to $U_1$, noting that $s < q^{4ud}$. For each $1 \leq i \leq s$, let $x_i$ be a nontrivial element of $T$ that centralises $V_i \perp V_i^\perp$, acting trivially on $V_i^\perp$ (for example, let $x_i = [\l,\l^{-1}] \perp I_{n-2}$ for $\l \in \F_{q^2}^\times$ of order $(q+1)/(2,q+1)$).

Let $y \in G$ be an element stabilising a $k$-space $W$ of $V$ for some $1 \leq k < d$. Write $W = \< w_1,\dots,w_k \>$ and $W_U = \< u_1, \dots, u_k \>$ where $w_i = u_i + v_i$ with $u_i \in U$ and $v_i \in U^\perp$. Now $W_U$ is a subspace of $U$ of dimension $l \leq k < d$. By Corollary~\ref{cor:embedding}, $U_2$ contains $l$-spaces of each isometry type. Therefore, $U_1$ is orthogonal to an $l$-space of $U$ of each isometry type, so every $l$-space of $U$ is orthogonal to a subspace of $U$ isometric to $U_1$. In particular, we can fix $1 \leq i \leq s$ such that $W_U \subseteq V_i^\perp$. This implies that $W \subseteq W_U \oplus U^\perp \subseteq V_i^\perp$. Therefore, $\<x_i,y\>$ stabilises $W$, so $\<x_i,y\> \neq G$, as required.
\end{proof}

\begin{corollary} \label{cor:subspaces_spread}
Let $T$ be a finite simple classical group with natural module $V = K^n$ and let $g \in \Aut^*(T)$. Assume that $T \neq \PSL_n(q)$. If every element of the coset $Tg$ stabilises a $k$-space for some $1 \leq k < d < n/2$, then $s(\<T,g\>) \leq |K|^{4d}$.
\end{corollary}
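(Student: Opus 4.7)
The plan is to exhibit fewer than $|F|^{4d}$ nontrivial elements of $G$ admitting no common generating partner, which immediately yields $s(G) < |F|^{4d}$.

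First I would invoke Proposition~\ref{prop:subspaces_spread} (identifying $|F| = q^u$ in the notation of that proposition) to produce a set $S$ of nontrivial elements of $T$ with $|S| < |F|^{4d}$ such that every $y \in G$ stabilising a $j$-space with $1 \leq j < d$ satisfies $\<x, y\> \neq G$ for some $x \in S$. I claim that no single $y \in G$ satisfies $\<x, y\> = G$ for every $x \in S$; granting this, $s(G) \leq |S| - 1 < |F|^{4d}$.

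To prove the claim, suppose for contradiction that such a $y$ exists. Since $S \subseteq T$, the coset $Ty$ must generate the cyclic quotient $G/T = \<Tg\>$, so $Ty = Tg^a$ for some integer $a$ coprime to $m := |G/T|$. The key manoeuvre is to pass from $y$ to a power $y^r$ that both lies in the coset $Tg$ and generates the same cyclic subgroup as $y$. By Dirichlet's theorem on primes in arithmetic progressions one can choose a prime $r > |y|$ with $r \equiv a^{-1} \mod{m}$. Then $(r,|y|) = 1$ since $r$ is a prime exceeding $|y|$, so $\<y^r\> = \<y\>$ and hence $\<x, y^r\> = \<x, y\> = G$ for every $x \in S$. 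On the other hand, $Ty^r = Tg^{ar} = Tg$, so $y^r \in Tg$, and the hypothesis of the corollary now forces $y^r$ to stabilise a $j$-space with $1 \leq j < d$. Proposition~\ref{prop:subspaces_spread} then supplies some $x \in S$ with $\<x, y^r\> \neq G$, the desired contradiction.

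The substance of the corollary is carried entirely by Proposition~\ref{prop:subspaces_spread}; the only subtlety in translating it into a bound on $s(G)$ is the power-replacement step, which lets us assume the generating partner $y$ lies in the specific coset $Tg$ on which the hypothesis is formulated. I expect no real obstacle here.
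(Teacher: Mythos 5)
Your proof is correct, and the paper itself gives no explicit argument for this corollary, so there is no authorial version to compare against; yours is the natural one. You rightly identify the one genuine subtlety — that the spread definition allows the witness $y$ to lie anywhere in $G$, while the hypothesis only controls the coset $Tg$ — and the power-replacement step resolves it cleanly. Two small remarks. First, Dirichlet is heavier than necessary: since $\gcd(a,m)=1$, the Chinese Remainder Theorem already produces an integer $r$ with $r \equiv a^{-1} \pmod{m}$ and $r \equiv 1 \pmod{p}$ for every prime $p$ dividing $|y|$ but not $m$, which gives $\gcd(r,|y|)=1$ directly. Second, you should note (as the proof of Proposition~\ref{prop:subspaces_spread} makes clear) that the elements of $S$ are nontrivial and pairwise distinct — the $x_i$ have fixed-point space exactly $V_i^\perp$, so distinct $V_i$ yield distinct $x_i$ — which is needed for the conclusion $s(G) \leq |S|-1$ to be legitimate under the definition of spread.
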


\subsection{Elements stabilising subspaces} \label{ss:subspaces}

In light of Corollary~\ref{cor:subspaces_spread}, in order to prove Theorem~\ref{thm:spread}, we should study how classical groups act on their natural modules. Here our main result is Theorem~\ref{thm:subspaces}, which may be of independent interest.

We begin with the following crucial observation.

\begin{proposition} \label{prop:subspaces_shintani}
Let $X$ be a simple classical algebraic group, let $\s_1,\s_2 \in \Aut^*(X)$ be commuting Steinberg endomorphisms of $X$ and let $F$ be the Shintani map of $(X,\s_1,\s_2)$. Then $x\ws_2 \in \< X_{\s_1}, \ws_2 \>$ stabilises a $k$-space if and only if $F(x\ws_1) \in \< X_{\s_2}, \ws_1 \>$ stabilises a $k$-space.
\end{proposition}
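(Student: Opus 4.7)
The plan is to apply Theorem~\ref{thm:shintani_subgroups_strong} with $Y \leq X$ taken to be the stabiliser of a suitable subspace configuration in the natural module $V$ of $X$. For each $X$-orbit of $k$-subspace configurations preserved by $\<\s_1,\s_2\>$, the Lang--Steinberg theorem yields a representative whose $X$-stabiliser $Y$ is $\<\s_1,\s_2\>$-stable. When $X = \PSL_n$ and $\ws_2$ involves the graph automorphism $\g$ (which interchanges $k$- and $(n-k)$-spaces), I would take $Y$ instead to be the $X$-stabiliser of either a flag $W \subset W'$ or a direct sum decomposition $V = W \oplus W'$ with $\dim W = k$ and $\dim W' = n-k$; such $Y$ is disconnected with $Y/Y^\circ \cong C_2$ and is $\<\s_1,\s_2\>$-stable. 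In all other cases, $Y$ is the stabiliser of a single $k$-space $W$.

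In each case, $Y^\circ$ is either a maximal parabolic of $X$ (for totally singular $W$ or a flag) or a Levi-type reductive subgroup (for nondegenerate $W$ or a decomposition), the quotient $N_X(Y^\circ)/Y^\circ$ is finite (typically $C_2$, or a Klein four group in some orthogonal cases), and the hypothesis $N_{X_{s\s_i}}(Y^\circ_{s\s_i}) = N_X(Y^\circ)_{s\s_i}$ for all $s \in N_X(Y^\circ)$ then reduces via Lang--Steinberg to this finite component group. Theorem~\ref{thm:shintani_subgroups_strong}(i) now gives that $x\ws_2$ lies in some $\<X_{\s_1},\ws_2\>$-conjugate of $(\<Y,\ws_2\>^{s_1})_{\s_1}$ for some $s \in S_1(Y)$ if and only if $F(x\ws_2)$ lies in some $\<X_{\s_2},\ws_1\>$-conjugate of $(\<Y,\ws_1\>^{s_2})_{\s_2}$ for some $s \in S_2(Y)$.

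To conclude, I would identify the subgroups $(\<Y,\ws_i\>^{s_j})_{\s_i}$, as $s$ ranges over $S_j(Y)$ and $Y$ varies over the stabilisers described above, with precisely the $k$-space stabilisers in $\<X_{\s_i},\ws_j\>$ in the sense of the definition preceding the proposition. As $s$ varies, the resulting twists run through the different finite isometry classes of $k$-spaces within a single $X$-orbit, while varying $Y$ accounts for the different orbit types (totally singular, nondegenerate of each type, or the flag or decomposition choices in the $\PSL_n$ graph case). Combined with the bijection of the previous paragraph, this yields the proposition.

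The main obstacle is the case analysis: ensuring that the Shintani correspondence sends the isometry class of a given $k$-space on the $(\s_1,\s_2)$-side to the appropriate class on the $(\s_2,\s_1)$-side. The $\POm^+_{2m}$ setting, where the two $X$-orbits of maximal totally singular $m$-spaces fuse under $\g$, and the $\PSL_n$ graph-automorphism case requiring disconnected $Y$, are the most delicate. In both, one must track carefully how the twist parameters $s \in S_i(Y)$ correspond under the Shintani map; this should follow from the component-group centraliser identification that concludes part~(ii) of the proof of Theorem~\ref{thm:shintani_subgroups_strong}, so no essentially new ingredient is required beyond a careful bookkeeping.
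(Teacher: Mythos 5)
Your proposal is correct and takes essentially the same route as the paper, whose proof is a single sentence: apply Theorem~\ref{thm:shintani_subgroups} with $Y$ a $\<\s_1,\s_2\>$-stable stabiliser in $X$ of a $k$-space (or, when $X = \PSL_n$ and the graph automorphism is involved, a subgroup of type $P_{k,n-k}$ or $\GL_k \times \GL_{n-k}$). Your identification of the relevant subgroups $Y$, including the disconnected $P_{k,n-k}$ and $\GL_k \times \GL_{n-k}$ cases for $\PSL_n$, matches the paper exactly; the extra detail you supply about Lang--Steinberg, the component group $N_X(Y^\circ)/Y^\circ$, and the bookkeeping of twist parameters is what the paper leaves implicit when it declares the result an "immediate consequence" of the theorem.
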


\begin{proof}
This is an immediate consequence of Theorem~\ref{thm:shintani_subgroups} when $Y$ is a $\<\s_1,\s_2\>$-stable stabiliser in $X$ of a $k$-space (or if $X=\PSL_n$, perhaps a subgroup of type $P_{k,n-k}$ or $\GL_k \times \GL_{n-k}$).
\end{proof}

\begin{corollary} \label{cor:subspaces_shintani}
Let $F\:\Inndiag(T)g \to \Inndiag(T_0)g_0$ be the Shintani map of a standard classical pair $(T,g)$. There exists $y \in Tx$ stabilising a $k$-space for all $x \in \Inndiag(T)g$ if and only if there exists $y_0 \in T_0x_0$ stabilising a $k$-space for all $x_0 \in \Inndiag(T_0)g_0$.
\end{corollary}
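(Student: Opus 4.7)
The plan is to deduce this corollary directly from Corollary~\ref{cor:shintani_outer} by taking both properties $P$ and $P_0$ to be ``stabilises a $k$-space of the natural module'', applied respectively to elements of $\Aut(T)$ and $\Aut(T_0)$. The two hypotheses of Corollary~\ref{cor:shintani_outer} that I need to verify are conjugation-invariance of these properties and their compatibility under the Shintani map $F$ on $\Inndiag(T)g$.

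First I would check conjugation-invariance. If $y \in \Aut(T)$ stabilises a $k$-space $W$, then by definition $y$ normalises the stabiliser $\Stab_T(W)$ in $T$ (or, when $T = \PSL_n(q)$ and $y \notin \PGaL(V)$, the corresponding $\GL_k(q) \times \GL_{n-k}(q)$ or $P_{k,n-k}$ subgroup). For any $h \in \Aut(T)$, the conjugate $y^h$ then normalises $h^{-1} \Stab_T(W) h$, which is again the stabiliser of a $k$-space (since $h$ acts on the set of $k$-spaces, suitably interpreted). So the property is $\Aut(T)$-invariant, and the same applies to $\Aut(T_0)$.

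Next I would invoke Proposition~\ref{prop:subspaces_shintani}. Let $(X,\s_1,\s_2)$ be the Shintani setup of the standard classical pair $(T,g)$, so that $T = O^{p'}(X_{\s_1})$, $\Inndiag(T) = X_{\s_1}$, $\Inndiag(T_0) = X_{\s_2}$, and $F$ is the Shintani map of $(X,\s_1,\s_2)$. Since $(T,g)$ is classical, $\s_1,\s_2 \in \Aut^*(X)$, so the proposition applies and gives: for every $h \in \Inndiag(T)g$, $h$ stabilises a $k$-space if and only if $F(h) \in \Inndiag(T_0)g_0$ stabilises a $k$-space.

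With both hypotheses of Corollary~\ref{cor:shintani_outer} in hand, the conclusion follows at once. I do not expect a significant obstacle: the content of the result is entirely absorbed by Proposition~\ref{prop:subspaces_shintani} and Corollary~\ref{cor:shintani_outer}, and this corollary is the packaging step that translates the pointwise correspondence provided by Shintani descent into the desired statement about all cosets $Tx$ simultaneously.
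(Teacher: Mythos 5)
Your proposal is correct and matches the paper's own proof, which indeed states that the corollary is ``a combination of Corollary~\ref{cor:shintani_outer} and Proposition~\ref{prop:subspaces_shintani}''. You have simply spelled out the two routine verifications (conjugation-invariance of the property, and the Shintani-compatibility supplied by Proposition~\ref{prop:subspaces_shintani}) that the paper leaves implicit.
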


\begin{proof}
This is a combination of Corollary~\ref{cor:shintani_outer} and Proposition~\ref{prop:subspaces_shintani}.
\end{proof}

We can now state our main result on irreducible elements of almost simple groups.

\begin{theorem} \label{thm:subspaces}
Let $T$ be a finite simple classical group and let $x \in \Aut^*(T)$. 
\begin{enumerate}
\item Every element of $Tx$ is reducible on $V$ if and only if $x \in \Inndiag(T)g$ for $(T,g)$ in Table~\ref{tab:1spaces} or~\ref{tab:subspaces}.
\item Assume that $x \in \PO^\e_{2m}(q)$ if $T = \POm^\e_{2m}(q)$. Then every element of $Tx$ stabilises a $1$-space of $V$ if and only if $x \in \Inndiag(T)g$ for $(T,g)$ in Table~\ref{tab:1spaces}. 
\item If $T = \POm^\e_{2m}(q)$ and $x \in \PGO^\e_{2m}(q) \setminus \PO^\e_{2m}(q)$, then every element of $Tx$ stabilises a $1$-space or $2$-space of $V$ if and only if $x \in \Inndiag(T)g$ for $(T,g)$ in Table~\ref{tab:1spaces}.
\end{enumerate}
\end{theorem}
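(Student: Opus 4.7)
The plan is to use outer Shintani descent to reduce Theorem~\ref{thm:subspaces} to a finite list of base cases and then verify subspace-stabilisation directly in each. By Corollary~\ref{cor:subspaces_shintani}, whether every element of $Tx$ stabilises a $k$-space is invariant under the outer Shintani map $\ddot{F}$ associated to a standard classical pair. By Theorem~\ref{thm:setups}, the coset $\<\Inndiag(T),x\>$ is $\Out(T)$-conjugate to $\<\Inndiag(T),g\>$ for some $g$ drawn from Table~\ref{tab:cases}, so we may replace $(T,g,x)$ by its descent $(T_0,g_0,x_0)$ and iterate until we reach a minimal configuration. This produces a short list of base cases to which the theorem reduces.

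For the ``if'' directions of parts~(i)--(iii), I would check the exceptional pairs directly. If $T=\Om_{2m+1}(q)$, every element of $\GO_{2m+1}(q)$ has a self-reciprocal characteristic polynomial of odd degree, forcing an eigenvalue in $\{1,-1\}\subseteq\F_q$ and hence a stabilised $1$-space. The case $T=\Sp_{2m}(q)$ with $q$ even follows from Proposition~\ref{prop:symplectic} and the exceptional isomorphism $\Sp_{2m}(q)\cong\Om_{2m+1}(q)$ in characteristic two. For $T=\PSL^\pm_{2m+1}(q)$ with $g$ a graph automorphism, every element of $Tg$ preserves a non-degenerate bilinear form on the odd-dimensional natural module (since $g$ acts as inverse-transpose), reducing to the preceding $\Om_{2m+1}$-type argument. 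For $T=\POm^\pm_{2m}(q)$ with $g$ a graph automorphism, a similar form-preservation argument combined with tracking whether $x\in\PO^\e_{2m}(q)$ (which forces a $1$-space, as in part~(ii)) or $x\in\PGO^\e_{2m}(q)\setminus\PO^\e_{2m}(q)$ (which forces only a $1$- or $2$-space, as in part~(iii)) handles the dichotomy.

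For the ``only if'' direction, in every case not appearing in Table~\ref{tab:1spaces} or Table~\ref{tab:subspaces}, I would exhibit an irreducible element $x_0\in\Inndiag(T_0)g_0$---typically a Singer cycle or maximal-torus element acting irreducibly on the natural module of $T_0$---and apply Proposition~\ref{prop:subspaces_shintani} to lift the irreducibility back to $Tx$. The main obstacle will be the orthogonal case $T=\POm^\pm_{2m}(q)$ under a graph automorphism, where parts~(ii) and~(iii) demand a sharper distinction between $1$- and $2$-space stabilisation, the Shintani descent lands across both plus- and minus-type orthogonal groups with various graph, triality (for $m=4$), and diagonal decorations, and the coset $\PGO^\e_{2m}\setminus\PO^\e_{2m}$ contains reflection-type components whose action on $V$ requires bespoke analysis. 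The most delicate point will be simultaneously establishing the $2$-space stabilisation asserted in part~(iii) while ruling out $1$-space stabilisation for elements of that coset, which requires precise control over the $\Out(T)$-class containing $x$ through each descent step.
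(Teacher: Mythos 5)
Your overall strategy matches the paper's: use outer Shintani descent (Corollary~\ref{cor:subspaces_shintani}) to reduce to base configurations, then verify reducibility or $1$-space stabilisation directly in each. However, there are several concrete errors and gaps.

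First, and most seriously, you list $T = \Sp_{2m}(q)$ with $q$ even as an exceptional pair for the ``if'' direction, invoking Proposition~\ref{prop:symplectic} and the isomorphism $\Sp_{2m}(q) \cong \Om_{2m+1}(q)$. But $\PSp_{2m}(q)$ does \emph{not} appear in Table~\ref{tab:1spaces} or Table~\ref{tab:subspaces}. The theorem concerns the natural module $V = \F_q^{2m}$, and Singer cycles (Lemma~\ref{lem:subspaces}(iii)) show that every coset of $\Sp_{2m}(q)$ in $\GSp_{2m}(q)$ contains elements acting irreducibly on $\F_q^{2m}$. The identification with $\Om_{2m+1}(q)$ and Proposition~\ref{prop:symplectic} concern the $(2m{+}1)$-dimensional orthogonal module, not $V$; this is exactly the distinction Remark~\ref{rem:spread_symplectic} is drawing, and conflating the two would produce a false claim here.

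Second, your argument for $T = \PSL^\pm_{2m+1}(q)$ under a graph automorphism --- ``every element of $Tg$ preserves a non-degenerate bilinear form on the odd-dimensional natural module'' --- is not literally correct. An element of $\PGL_n(q)\g$ is not a linear map on $V$ and does not fix a form; if it did, $T$ would embed in an orthogonal group. What is true, and what the paper uses, is a theorem of Fulman and Guralnick that the eigenvalue multiset of $y \in \GL_n(q)\g$ is closed under inversion, so $y^2$ has a $\pm1$-eigenvalue and stabilises a $1$-space $U$; one must then show $y$ itself normalises a reducible subgroup, which requires a short Goursat-type argument considering $H \cap H^y$ where $H$ is the stabiliser of $U$.

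Third, you omit the case $(\PSL_{2m}(q),\g\wp^i)$ with $f/i$ odd and $(\PSU_{2m}(q),\wp^i)$, where the standard Shintani descent does not land on a group of the form $\Inndiag(T_0)$ but on the coset $\PGL^{\e_0}_{2m}(q_0)\g$ (i.e.\ $g_0 = \g \neq 1$). Here there is no Singer cycle argument in $T_0$ directly; one must locate irreducible elements in each $\Out(T_0)$-class of $\PGL^{\e_0}_{2m}(q_0)\g$, which the paper does by passing to the centraliser of $\g$ (symplectic or orthogonal type) and applying an auxiliary Shintani map there. Without this, you cannot conclude the ``only if'' direction for $\PSL^\pm_{2m}(q)$ under these automorphisms. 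Relatedly, triality is not relevant: $\Aut^*(T)$ excludes triality for $\POm^+_8(q)$ by construction, so mentioning it suggests a misreading of the setup.
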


Before proving Theorem~\ref{thm:subspaces}, we record a consequence of \cite[Lemmas~5.3.2--5.3.4 \& 6.3.2]{ref:Harper}.

\begin{lemma} \label{lem:subspaces}
There exist irreducible elements in 
\begin{enumerate}
\item every coset of $\SL_n(q)$ in $\GL_n(q)$
\item every coset of $\SU_{2m+1}(q)$ in $\GU_{2m+1}(q)$
\item every coset of $\Sp_{2m}(q)$ in $\GSp_{2m}(q)$
\item every coset of $\Omega^-_{2m}(q)$ in $\DO^-_{2m}(q)$.
\end{enumerate}
\end{lemma}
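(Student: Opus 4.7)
Since the lemma is stated as a consequence of \cite[Lemmas~5.3.2--5.3.4 and 6.3.2]{ref:Harper}, the proof will be a translation exercise: those cited results construct irreducible semisimple elements of the relevant classical groups with prescribed images in various outer abelian quotients, and the four clauses here simply read off the coset structure of the similarity (or conformal) group modulo its derived subgroup.

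The plan for (i)--(iii) is uniform: I exhibit a Singer-type cyclic torus whose nontrivial regular elements act irreducibly on the natural module, and verify that the determinant (resp.\ similarity) map restricted to this torus, possibly composed with a scalar adjustment, surjects onto the outer quotient. Concretely, in (i) the cyclic group $\F_{q^n}^\times$ embeds into $\GL_n(q)$ via the regular representation of $\F_{q^n}$ as an $\F_q$-space; an element is irreducible exactly when its underlying scalar generates $\F_{q^n}$ over $\F_q$, and its determinant is the surjective norm $N_{\F_{q^n}/\F_q}\colon \F_{q^n}^\times \to \F_q^\times$. A counting argument (the non-primitive elements of $\F_{q^n}^\times$ number at most $\sum_{d \mid n,\, d < n}(q^d - 1)$, whereas each norm fibre has size $(q^n-1)/(q-1)$) shows every fibre contains irreducible elements, and this is precisely the content of \cite[Lemma~5.3.2]{ref:Harper}. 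Cases (ii) and (iii) proceed analogously using Singer cycles $C_{q^{2m+1}+1} \leq \GU_{2m+1}(q)$ and $C_{q^m+1} \leq \Sp_{2m}(q)$; in (iii) irreducible elements of $\Sp_{2m}(q)$ itself have similarity $1$, so I will additionally multiply by a scalar $\lambda I_{2m}$ of similarity $\lambda^2$ together with a single diagonal element of non-square similarity to traverse all $q-1$ cosets, this being the packaging in \cite[Lemmas~5.3.3 and~5.3.4]{ref:Harper}.

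Part (iv) is handled by direct appeal to \cite[Lemma~6.3.2]{ref:Harper}, whose Singer-type construction inside $\Omega^-_{2m}(q)$ (a cyclic torus of order $(q^m+1)/\gcd(2,q-1)$) yields the required irreducible elements in each coset. When $p=2$ the conclusion is essentially trivial since by the convention from Section~\ref{ss:lie_outer} we have $\DO^-_{2m}(q) = \Omega^-_{2m}(q)$, reducing the statement to the classical existence of an irreducible element.

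The main obstacle, and the reason the cited lemmas do substantive work rather than being routine, is case (iv) in odd characteristic: the quotient $\DO^-_{2m}(q)/\Omega^-_{2m}(q)$ has order $2$ or $4$ depending on $q^m \bmod 4$, and matching the determinant and spinor-norm invariants of Singer-type irreducible elements against this $2$-group coset structure requires the delicate computation carried out in \cite[Lemma~6.3.2]{ref:Harper}. By contrast, in (i)--(iii) the corresponding quotient is cyclic and detected purely by the determinant or similarity, so the argument reduces to a norm surjectivity check paired with the abundance of primitive elements.
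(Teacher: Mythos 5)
Your proposal matches the paper exactly: the paper records this lemma as an immediate consequence of \cite[Lemmas~5.3.2--5.3.4 and 6.3.2]{ref:Harper} and offers no further argument, which is precisely your opening observation, and your deduction of the four clauses from those cited constructions is sound. Your supplementary Singer-torus sketch is consistent with what such lemmas must do, though in (iii) the right device is the full cyclic conformal torus $\{c \in \F_{q^{2m}}^\times \mid c^{q^m+1} \in \F_q^\times\}$ of order $(q^m+1)(q-1)$, whose similarity map surjects onto $\F_q^\times$, rather than multiplying an irreducible symplectic element by a diagonal similarity of non-square factor (such a product need not remain irreducible).
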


\begin{table} 
\centering
\begin{minipage}{0.5\textwidth}
\centering
\caption{The cosets in Theorem~\ref{thm:subspaces}} \label{tab:1spaces}
\begin{tabular}{ccc} 
\hline
$T$                                  & $g$        & $f/i$ \\
\hline
$\PSL_{2m+1}(q)$ or $\POm^+_{2m}(q)$ & $\g\wp^i$  & odd   \\
$\PSU_{2m+1}(q)$ or $\POm^-_{2m}(q)$ & $\wp^i$    & any   \\
$\Om_{2m+1}(q)$                      & $\wp^i$    & any   \\
\hline
\end{tabular}
\end{minipage}
\begin{minipage}{0.4\textwidth}
\centering
\caption{The cosets in Theorem~\ref{thm:subspaces}(i)} \label{tab:subspaces}
\begin{tabular}{ccc} 
\hline
$T$              & $g$        & $f/i$ \\
\hline
$\PSL_{2m+1}(q)$ & $\g\wp^i$  & even  \\
$\PSU_{2m+1}(q)$ & $\g\wp^i$  & odd   \\
$\POm^+_{2m}(q)$ & $\wp^i$    & any   \\
\hline
\end{tabular}
\end{minipage}
\end{table}

\begin{proof}[Proof of Theorem~\ref{thm:subspaces}]
Let $x \in \Aut^*(T)$. By Theorem~\ref{thm:setups}, we may assume that $x=hg$ where $(T,g)$ is standard and $h \in \Inndiag(T)$. Let $(X,\s_1,\s_2)$ be the Shintani setup for $(T,g)$ with Shintani map $F\: \Inndiag(T)g \to T_0g_0$. We consider several cases, where we always assume that $i$ divides $f$ and $q_0=p^i$. \vspace{0.5\baselineskip}

\emph{\textbf{Case~1.} $(T,g)$ is one of the following:}\nopagebreak
\[
\begin{array}{ccccccc}
\hline
T   & \PSL_n(q)  & \PSL_{2m+1}(q) & \PSU_{2m+1}(q) & \PSp_{2m}(q) & \POm^+_{2m}(q) & \POm^-_{2m}(q) \\
g   & \wp^i      & \g\p^i         & \g\wp^i        & \wp^i        & \g\wp^i        & \g\wp^i        \\
f/i & \text{any} & \text{even}    & \text{odd}     & \text{any}   & \text{even}    & \text{odd}     \\
\hline
\end{array}
\]

Consulting Table~\ref{tab:cases}, we see that $T_0 \in \{ \PSL_n(q_0), \PSU_{2m+1}(q_0), \PSp_{2m}(q_0), \POm^-_{2m}(q_0) \}$ and $g_0=1$. Lemma~\ref{lem:subspaces} implies that every coset of $T_0$ in $\Inndiag(T_0)$ contains an irreducible element, so by Corollary~\ref{cor:subspaces_shintani}, $Tx$ contains an irreducible element for every $x \in \Inndiag(T)g$. \vspace{0.5\baselineskip}

\emph{\textbf{Case~2.} $(T,g)$ is either $(\PSL_{2m}(q), \g\wp^i)$ for odd $f/i$ or $(\PSU_{2m}(q),\wp^i)$.}\nopagebreak

For $T = \PSL^\e_{2m}(q)$, we have $\Inndiag(T_0)g_0 = \PGL^{\e_0}_{2m}(q_0)\g$ with $\e_0=-\e$. By Theorem~\ref{thm:setups}, every element of $\PGL^{\e_0}_{2m}(q_0)\g$ is conjugate to an element of $\PSL^{\e_0}_{2m}(q_0)x_0$ where $x_0$ is $\g$ or $\d_0^\ell\g$ for $\ell=(n,q_0-\e_0)/(n,q_0-\e_0)_2$, where $p$ is odd in the latter case. (Note that $\d_0^\ell\g$ is an involution.) Therefore, by Corollary~\ref{cor:subspaces_shintani}, it suffices to prove that $T_0x_0$ contains an irreducible element for these two choices of $x_0$.

\emph{\textbf{Case~2a.} $x_0 = \g$ or $m$ is even or $q_0 \not\equiv 3 \mod{4}$.}\nopagebreak

We consider only the case $\e_0=+$, since the result for $\e_0=-$ follows immediately from the result for $\e_0=+$ and Proposition~\ref{prop:subspaces_shintani} via the Shintani map $\PGL_{2m}(q_0)\g \to \PGU_{2m}(q_0)\g$ of $(\PSL_{2m}(q_0),\g)$. If $x_0 = \g$, then $C_{\PGL_{2m}(q_0)}(\g) \cong \PGSp_{2m}(q_0)$, so we may fix an irreducible element $y \in C_{\PGL_{2m}(q_0)}(\g)$ of odd order, but this implies that $(y\g)^2 = y^2$ is also irreducible, so $y\g \in T_0\g$ is irreducible. If $p$ is odd and $x_0=\d_0^\ell\g$, then $C_{\PGL_{2m}(q_0)}(\d_0^\ell\g) \cong \PGO^-_{2m}(q_0)$ (the assumptions on $m$ and $q_0$ give the sign), so again there exists $y \in C_{\PGL_{2m}(q_0)}(\d_0^\ell\g)$ such that $y\d_0^\ell\g \in T_0x_0$ is irreducible.

\emph{\textbf{Case~2b.} $x_0 = \d^\ell_0\g$ and $m$ is odd and $q_0 \equiv 3 \mod{4}$.}\nopagebreak

Here we consider only $\e_0=-$, since the result for $\e_0=+$ will follow by Proposition~\ref{prop:subspaces_shintani}. Now $C_{\PGU_{2m}(q_0)}(\d_0^\ell\g) \cong \PGO^-_{2m}(q_0)$ and we fix an element $y \in C_{\PGU_{2m}(q_0)}(\d_0^\ell\g)$ of odd order that acts irreducibly on $\F_{q_0}^{2m}$. Although $y^2 = (y\d_0^\ell\g)^2$ acts irreducibly on $\F_{q_0}^{2m}$, we need to show that it acts irreducibly on ${\F_{q_0^2}}^{\!\!\!\!\!2m}$, the natural module for $\PGU_{2m}(q_0)$. Observe that $y\d_0^\ell\g \in \PGU_{2m}(q_0)\g = \PGU_{2m}(q_0)\wp^i \subseteq \PGL_{2m}(q_0^2)\wp^i$. Let $E\:\PGL_{2m}(q_0^2)\wp^i \to \PGL_{2m}(q_0)$ be the Shintani map of $(\PGL_{2m},\p^{2i},\p^i)$. Then $E(y\d_2\g)$ is $\PGL_{2m}$-conjugate to $y^{-2}$ (see Lemma~\ref{lem:shintani_power}), which is irreducible on $\F_{q_0}^{2m}$, so by Proposition~\ref{prop:subspaces_shintani}, $y\d_2\g \in T_0x$ is irreducible on ${\F_{q_0^2}}^{\!\!\!\!\!2m}$. \vspace{0.5\baselineskip}

\emph{\textbf{Case~3.} $(T,g)$ is in Table~\ref{tab:subspaces}.}\nopagebreak

From Table~\ref{tab:cases}, we see that $g_0=1$ and $\Inndiag(T_0) \in \{ \PSU_{2m}(q), \POm^+_{2m}(q) \}$. It is well known that in this case, $\Inndiag(T_0)$ does not contain any irreducible elements but every coset of $T_0$ in $\Inndiag(T_0)$ does contain an element that does not stabilise any $1$-spaces (nor any $2$-spaces if $T = \POm^\e_{2m}(q)$, noting that $2m \geq 8$ in this case). Therefore, as in Case~1, applying Corollary~\ref{cor:subspaces_shintani}, gives the same conclusion for $Tx$ for every $x \in \Inndiag(T)$. \vspace{0.5\baselineskip}

\emph{\textbf{Case~4.} $(T,g)$ is in Table~\ref{tab:1spaces}.}\nopagebreak

By Corollary~\ref{cor:subspaces_shintani}, we need to prove that every element of $\Inndiag(T_0)g_0$ stabilises a $1$-space, or if $T = \POm^\e_{2m}(q)$ and $x \in \PGO^\e_{2m}(q) \setminus \PO^\e_{2m}(q)$, possibly a $2$-space.

\emph{\textbf{Case~4a.} $(T,g) = (\Om_{2m+1}(q),\wp^i)$.}\nopagebreak

In this case, $\Inndiag(T_0)g_0 = \SO_{2m+1}(q)$. Since $y$ is conjugate to $y^{-\tr}$, the eigenvalue multiset of $y$ is closed under inversion and therefore must include an odd (and hence positive) number of roots of unity, which implies that $y$ stabilises a $1$-space.

\emph{\textbf{Case~4b.} $(T,g)$ is either $(\PSL_{2m+1}(q), \g\wp^i)$ for odd $f/i$ or $(\PSU_{2m+1}(q), \p^i)$.}\nopagebreak

Writing $T = \PSL^{\e}_{2m+1}(q)$, we have $\Inndiag(T_0)g_0 = \PGL^{\e_0}_{2m+1}(q_0)\g$ where $\e_0 = -\e$. Let $y \in \PGL^{\e_0}_{2m+1}(q_0)\g$. We claim that $y$ fixes a $1$-space. The Shintani map of $(\PSL_{2m+1}(q_0),\g)$ is $E\:\PGL_{2m+1}(q_0)\g \to \PGU_{2m+1}(q_0)\g$, so by Proposition~\ref{prop:subspaces_shintani}, it suffices to assume that $\e=+$. We can now apply the argument from the proofs of \cite[Propositions~5.8 and 6.4]{ref:BurnessGuest13}. By \cite[Theorem~4.2]{ref:FulmanGuralnick04}, the eigenvalue multiset of $y$ is closed under inversion, so, as in the previous case, $y^2$ stabilises a $1$-space $U$ of $V$. Let $H$ be the stabiliser in $\PGL_{2m+1}(q_0)$ of $U$. Then $y$ normalises $H \cap H^y$. Now $H^y$ is the stabiliser $W$ of a $(n-1)$-space of $V$. If $U \subseteq W$, then $H \cap H^y$ is a subgroup of type $P_{1,n-1}$; otherwise, $V = U \oplus W$, so $H \cap H^y$ has type $\GL_1(q_0) \times \GL_{n-1}(q_0)$. In either case, by definition, $y$ stabilises a $1$-space.

\emph{\textbf{Case~4c.} $(T,g)$ is either $(\POm^+_{2m}(q), \g\wp^i)$ for odd $f/i$ or $(\POm^-_{2m}(q),\g\p^i)$.}\nopagebreak

Writing $T = \POm^\e_{2m}(q)$, we have $\Inndiag(T_0)g_0 = \PDO^{\e_0}_{2m}(q_0)\g$. Let $y \in \PDO^{\e_0}_{2m}(q_0)\g$. We claim that $y$ fixes a $1$-space. If $p=2$, then 
\[
\PDO^{\e_0}_{2m}(q_0)\g = \Om^{\e_0}_{2m}(q_0)\g = \O^{\e_0}_{2m}(q_0) \setminus \Om^{\e_0}_{2m}(q_0),
\] 
and \cite[Theorem~11.43]{ref:Taylor92} implies that $y$ has an odd-dimensional $1$-eigenspace, so it certainly stabilises a $1$-space. Now assume that $p$ is odd. Recall that
\[
\PDO^{\e_0}_{2m}(q_0)\g = \PGO^{\e_0}_{2m}(q_0) \setminus \PDO^{\e_0}_{2m}(q_0) = \{ g \in \PGO^{\e_0}_{2m}(q_0) \mid \t(g)^m = -\det(g) \}.
\]
Write $\a = \t(y)$. Since $y$ is conjugate to $\a y^{-\tr}$, the eigenvalue multiset $\Lambda$ of $y$ is closed under the map $\l \mapsto \a\l^{-1}$. Therefore, $\Lambda = \Lambda_1 \cup \Lambda_2$ where $\Lambda_1 = \{ \l \in \Lambda \mid \l^2 = \a \}$ and $\Lambda_2 = \{ \l_1, \a\l_1^{-1}, \dots, \l_k, \a\l_k^{-1}\}$. Now $\det(y) = \prod_{\l \in \Lambda} \l = \a^k \prod_{\l \in \Lambda_1} \l$, which implies that $\Lambda_1$ is nonempty. If $y \in \PO^{\e_0}_{2m}(q_0)$, then $\a = 1$ and every eigenvalue in $\Lambda_1$ is $1$ or $-1$, so $y$ stabilises a $1$-space. Now we may assume that $y \in \PGO^{\e_0}_{2m}(q_0) \setminus \PO^{\e_0}_{2m}(q_0)$. Since $\Lambda_1$ consists of square roots of $\a$, $y$ has an eigenvalue in $\F_{q_0^2}^\times \setminus \F_{q_0}^\times$, so $y$ stabilises a $2$-space. 

The proof is complete.
\end{proof}

\begin{remark}
Theorem~\ref{thm:subspaces} implies that there are finite simple classical groups $T$ such that $\Inndiag(T)$ contains no irreducible elements but $\Aut^*(T)$ does. For example, every element of $\PGO^+_{2m}(q)$ acts on $V = \F_q^{2m}$ reducibly, but, by Theorem~\ref{thm:subspaces}, there exist irreducible elements in the coset $\POm^+_{2m}(q)\g\wp^i$ if $f/i$ is even.
\end{remark}

\subsection{Asymptotics of spread and uniform spread}

We now turn to the proof of Theorem~\ref{thm:spread}. To this end, let us outline the probabilistic method for obtaining lower bounds on uniform spread, which was introduced by Guralnick and Kantor in \cite{ref:GuralnickKantor00} and has been a key tool in the subsequent work of many authors \cite{ref:BreuerGuralnickKantor08,ref:BurnessGuest13,ref:BurnessGuralnickHarper,ref:Harper17,ref:Harper}.

Let $G$ be a finite group. We fix some notation. For $x,s \in G$, let $P(x,s)$ be the probability that $x$ and a random conjugate of $s$ do not generate $G$, that is, 
\[
P(x,s) = \frac{|\{t \in s^G \mid \< x, t \> \neq G \}|}{|s^G|}.
\]
For a subgroup $H \leq G$, the \emph{fixed point ratio} of $x \in G$ in the action of $G$ on $G/H$ is
\[
\fpr(x,G/H) = \frac{|\{ \omega \in G/H  \mid \omega x = \omega \}|}{|G/H|} = \frac{|x^G \cap H|}{|x^G|}.
\]
Recall that $\M(G,s)$ is the set of maximal subgroups of $G$ that contain $s$. The following key lemma combines \cite[Lemmas~2.1 and~2.2]{ref:BurnessGuest13}.

\begin{lemma} \label{lem:prob_method}
Let $G$ be a finite group and let $s \in G$.
\begin{enumerate}
\item If $P(z,s) < 1/k$ for all prime order elements $z \in G$, then $u(G) \geq k$, witnessed by $s^G$.
\item For all $z \in G$, 
\[ 
P(z,s) \leq \sum_{H \in \M(G,s)} \fpr(z,G/H).
\]
\end{enumerate}
\end{lemma}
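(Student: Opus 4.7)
The plan is to prove part~(ii) first by a double-counting argument, and then derive part~(i) from (ii) together with a union bound and the standard reduction to prime-order elements. Throughout I will write $P(z,s)\,|s^G|$ as the cardinality of the ``bad'' set $\{t \in s^G : \langle z,t\rangle \neq G\}$.

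For part~(ii), my starting observation is that if $\langle z,t\rangle$ is proper then it lies in some maximal subgroup $M$ of $G$, which therefore contains both $z$ and $t$. Choosing one such $M$ for each bad $t$ gives
\[
P(z,s)\,|s^G| \;\leq\; |\{(t,M) : t \in s^G,\ M\text{ maximal in }G,\ z \in M,\ t \in M\}| \;=\; \sum_{M \ni z}|s^G \cap M|,
\]
which, after dividing by $|s^G|$, reads $P(z,s) \leq \sum_{M \ni z}\fpr(s,G/M)$. The remaining task is to exchange the roles of $z$ and $s$. For this I will group maximal subgroups by $G$-conjugacy class and invoke the elementary orbit-counting identity
\[
|\{M \in H^G : w \in M\}| \;=\; [G:N_G(H)]\,\fpr(w,G/H) \qquad (w \in G),
\]
applied to $w = z$. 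The contribution of the class $H^G$ to $\sum_{M \ni z}\fpr(s,G/M)$ then becomes
\[
[G:N_G(H)]\,\fpr(z,G/H)\,\fpr(s,G/H),
\]
which is manifestly symmetric in $z$ and $s$. Exchanging them converts the sum over $M \ni z$ into the sum over $M \in \M(G,s)$ weighted by $\fpr(z,G/M)$, yielding the bound in~(ii).

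For part~(i), given nontrivial $x_1,\dots,x_k \in G$, I will first fix an element $z_i$ of prime order in $\langle x_i\rangle$ for each $i$. Since $z_i \in \langle x_i\rangle$, any $y$ satisfying $G = \langle z_i,y\rangle$ a fortiori satisfies $G = \langle x_i,y\rangle$, so it suffices to find $y \in s^G$ that generates with every $z_i$. The set of $y \in s^G$ that fail for at least one $i$ has size at most $\sum_{i=1}^{k} P(z_i,s)\,|s^G| < k \cdot (1/k) \cdot |s^G| = |s^G|$, so it is a proper subset of $s^G$, and a good $y$ exists. This shows $s^G$ witnesses $u(G) \geq k$.

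The only even mildly subtle ingredient is the symmetry observation in~(ii); the reduction to prime-order elements and the union bound in~(i) are entirely routine. I therefore anticipate no genuine obstacle.
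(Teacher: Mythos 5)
Your argument is correct. The paper gives no proof of its own here---it cites Lemmas~2.1 and~2.2 of Burness--Guest (which go back to Guralnick--Kantor)---so the comparison is with the standard argument those references use. Your part~(i) is that standard argument verbatim: reduce to prime-order powers of the $x_i$ and apply a union bound over $s^G$. For part~(ii) the references proceed more directly: one first observes the symmetry $P(z,s) = P(s,z)$ (since $\langle z, s^g\rangle = G$ iff $\langle z^{g^{-1}}, s\rangle = G$, then average over $g \in G$), and then notes that if $w \in z^G$ fails to generate $G$ with $s$, then $w$ lies in some $H \in \M(G,s)$, giving $P(s,z) \leq \sum_{H \in \M(G,s)} |z^G \cap H|/|z^G|$ immediately. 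You instead start from the other side, $P(z,s) \leq \sum_{M \ni z}\fpr(s,G/M)$, and use the orbit-counting identity $|\{M \in H^G : w \in M\}| = [G:N_G(H)]\,\fpr(w,G/H)$ (valid since $N_G(H)=H$ for $H$ maximal) to rewrite the contribution of each conjugacy class $H^G$ as $[G:N_G(H)]\,\fpr(z,G/H)\,\fpr(s,G/H)$, which is visibly symmetric, and then reassemble to get $\sum_{M \ni s}\fpr(z,G/M)$. Both routes are correct; the $P(z,s)=P(s,z)$ observation is the cleaner one-line move, but your swap is valid and has the small bonus of establishing the identity $\sum_{M \ni z}\fpr(s,G/M) = \sum_{M \ni s}\fpr(z,G/M)$ exactly rather than only the inequality you need.
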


To apply Lemma~\ref{lem:prob_method} we need upper bounds on fixed point ratios for primitive actions of almost simple groups. There is an extensive literature on these, motivated in part by their use in probabilistic methods across group theory, and we will apply the main theorem of \cite{ref:Burness071,ref:Burness072,ref:Burness073,ref:Burness074} together with bounds in \cite{ref:GuralnickKantor00}. We are now ready to prove Theorem~\ref{thm:spread}.

\begin{proof}[Proof of Theorem~\ref{thm:spread}]
Observe that for a prime power $q$, an almost simple group $G$ appears in part~(iii) if and only if $G = \<T,x\>$ where $x \in \Inndiag(T)g$ for $(T,g)$ in Table~\ref{tab:spread} (we ignore $T = \PSp_4(q)$ with a graph-field automorphism $g$ as there are finitely many such groups).

\begin{table}
\centering 
\caption{The exceptions in Theorem~\ref{thm:spread}(iii)} \label{tab:spread}
\begin{tabular}{ccc} 
\hline
$T$                                          & $g$                   \\
\hline
$\PSL_{2m+1}(q)$ or $\POm^+_{2m}(q)$         & $\g\wp^i$ ($f/i$ odd) \\
$\PSU_{2m+1}(q)$ or $\POm^-_{2m}(q)$         & $\wp^i$               \\
$\PSp_{2m}(q)$ ($q$ even) or $\Om_{2m+1}(q)$ & $\wp^i$               \\
\hline
\end{tabular}
\end{table}

Clearly (ii) implies (i). We now show that (i) implies (iii). Let $G$ be a group appearing in part~(iii), so we can write $G = \< T, x\>$ with $x \in \Inndiag(T)$ and $(T,g)$ in Table~\ref{tab:spread}. Let $V = \F_{q^u}^n$ be the natural module for $T$. We claim that $s(G)$ is bounded above by a function of $q$. If $n \leq 7$, then $s(G) \leq |G| \leq q^{150}$, so we will assume that $n \geq 8$. For now assume that $T \neq \PSp_{2m}(q)$. By Theorem~\ref{thm:subspaces}, every element of $Tx$ stabilises a $1$-space or a $2$-space (the latter only occurring if $T = \POm^\e_{2m}(q)$). Therefore, Corollary~\ref{cor:subspaces_spread} implies that $s(G) \leq q^{16}$, unless $T = \PSL_{2m+1}(q)$, in which case the argument of Guralnick in the proofs of \cite[Propositions~5.8 and~6.4]{ref:BurnessGuest13} uses the fact that every element of $Tx$ fixes a $1$-space to show that $s(G) < (q+1)^2$. If $T = \PSp_{2m}(q)$, then $s(G) \leq q$, by \cite[Theorem~4]{ref:Harper17} (the proof of which also relies on every element of $G$ stabilising a $1$-space of a particular module, see Remark~\ref{rem:spread_symplectic} below). Therefore, if $(G_i)$ has an infinite subsequence of groups in (iii), then $s(G_i)$ does not diverge to infinity, which gives our desired implication. 

It remains to show that (iii) implies (ii). Since $|G_i| \to \infty$, we know that $n_{i_k} \to \infty$, where $\F_{q^u}^{n_i}$ is the natural module for $T_i$. Fix $i$ and write $G = G_i$ and $n = n_i$. It suffices to show that $u(G) \geq f(n)$ for some function $f$ that satisfies $f(n) \to \infty$ as $n \to \infty$. 

We may assume that $n > 20$. In particular, by Theorem~\ref{thm:setups}, $G = \<T,x\>$ with $x = hg$ where $(T,g)$ is a standard classical pair not in Table~\ref{tab:spread} and $h \in \Inndiag(T)$. If $g=1$, then $G \leq \Inndiag(T)$ and the result follows as in \cite[Proposition~4.1]{ref:GuralnickKantor00} (compare with the proof of \cite[Theorem~3.1]{ref:BurnessGuest13}), so we assume that $g \neq 1$. In particular, $g$ is $\wp^i$ or $\g\wp^i$ for a proper divisor $i$ of $f$, unless $T = \PSL^\e_{2m}(q)$ and $g=\g$. \vspace{0.5\baselineskip}

\emph{\textbf{Case~1.} $(T,g)$ is neither $(\PSL_{2m}(q),\g\wp^i)$ for odd $f/i$ nor $(\PSU_{2m}(q),\wp^i)$}\nopagebreak

Let $(X,\s_1,\s_2)$ be the Shintani setup for $(T,g)$ and let $F$ be the Shintani map of $(T,g)$, so we have $F\:\Inndiag(T)g \to \Inndiag(T_0)g_0$. Consulting Table~\ref{tab:cases}, we see that $g_0 = 1$ and $T_0 \not\in \{ \Omega_{2m+1}(q_0), \, \PSp_{2m}(q_0) \, \text{($q_0$ even)} \}$ is a classical group with natural module $V_0 = \F_{q_0^{u_0}}^n$, where $u_0=2$ if $T_0 = \PSU_n(q_0)$ and $u_0=1$ otherwise. 

Fix $1 \leq \sqrt{n}/4 < k < \sqrt{n}/2$. Moreover, assume that $k$ is odd if $T_0 = \PSL^{\e_0}_n(q_0)$ and that $k$ is even and $(n-k)/2$ is odd otherwise. By combining \cite[Lemmas~5.3.2--5.36, 6.3.2 and~6.3.4]{ref:Harper}, every coset of $T_0$ in $\Inndiag(T_0)$ contains an element $y$ whose action on $V_0$ decomposes according to Table~\ref{tab:decomposition}. (Here $d$ (or $d^-$) refers to a nondegenerate (minus-type) $d$-subspace of $V_0$ on which $y$ acts irreducibly, and $d^+$ refers to the direct sum of a dual pair of totally singular $\frac{d}{2}$-spaces which $y$ stabilises, acting irreducibly and nonisomorphically on both $\frac{d}{2}$-spaces.) By Corollary~\ref{cor:shintani_outer}, there is $t \in T$ such that the action of $F(tx) = y$ decomposes as in Table~\ref{tab:decomposition}.

\begin{table}
\caption{Decomposition for Case~1 in the proof of Theorem~\ref{thm:spread}} \label{tab:decomposition}
\centering
\begin{tabular}{cc}
\hline
$T_0$                & decomposition of $V_0$      \\
\hline  
$\PSL_n(q_0)$        & $k   \oplus (n-k)$          \\  
$\PSU_n(q_0)$        & $k^- \perp (n-k)^{-(-)^n}$  \\
$\PSp_n(q_0)$        & $k   \perp (n-k)$           \\   
$\POm^{\e_0}_n(q_0)$ & $k^- \perp (n-k)^{-\e_0}$   \\       
\hline
\end{tabular}
\end{table}

Write $\M(G,tg)$ as the disjoint union $\M_1 \cup \M_2$ where $\M_1$ is the set of reducible subgroups in $\M(G,tg)$, and write
\[
a = \left\{ 
\begin{array}{ll}
n & \text{if $T_0 \in \{ \PSL_n(q_0),\, \PSU_n(q_0)\}$} \\
m & \text{if $T_0 \in \{ \PSp_{2m}(q_0),\, \POm^{\e_0}_{2m}(q_0) \}$.}
\end{array}
\right.
\] 

By Goursat's lemma (see \cite[Lemma~2.3.1]{ref:Harper} for example), the proper nonzero subspaces of $V_0$ stabilised by $y$ are a $k$-space, an $(n-k)$-space and if $T_0 \in \{ \PSU_{2m+1}(q_0), \POm^-_{2m}(q_0)\}$, also two $(n-k)/2$-spaces. Proposition~\ref{prop:subspaces_shintani} gives the analogous statement for $tx$ on $V$. In particular, $|\M_1| \leq 4$ and $tx$ stabilises no nonzero subspace of dimension less than $\sqrt{n}/4$.

Now let $H \in \M_2$. According to Theorem~\ref{thm:maximal_classical}, $H$ is of type (I)--(IV). Observe that the Shintani setup $(X,\s_1,\s_2)$ for $(T,g)$ satisfies $\s_1 = \s_2^e$ for some $e > 1$, so in light of Lemma~\ref{lem:shintani_power}, $(tx)^{-e}$ is $X$-conjugate to $y$. A power of $y$ (and therefore also $tx$) has $1$-eigenspace of codimension $k < \sqrt{n}/2$. Therefore, by \cite[Theorem~7.1]{ref:GuralnickSaxl03}, $H$ has type (I) with $Y \in \C_2 \cup \C_3 \cup \C_6$ (see Remark~\ref{rem:maximal}(i) and Table~\ref{tab:maximal_classical}) or type (II). It is easy to check that this gives $10a+7$ possible $G$-classes of type (I) subgroups and at most $2+\omega(f) \leq 3 + \log_2{f} \leq 3 + \log_2\log_2{q}$ $G$-classes of subfield subgroup, where $\omega(f)$ is the number of distinct prime divisors of $f$ (consult the tables in \cite[Chapter~3]{ref:KleidmanLiebeck} for example). Therefore, there are at most $10a+10+\log_2\log_2{q}$ choices for $H$ up to $G$-conjugacy. Moreover, by Lemma~\ref{lem:shintani_centraliser}, $\M_2$ contains at most $|C_{\Inndiag(T_0)}(y)|$ conjugates of any given $H \in \M_2$. Note that $|C_{\Inndiag(T_0)}(y)| \leq 4q_0^a$.

Let $z \in G$ have prime order. From the bounds in \cite[Section~3]{ref:GuralnickKantor00}, if $H \leq G$ is the stabiliser of a $d$-space of $V$, with $d < n/3$, then $\fpr(z,G/H) < 5q^{-d}$. By combining \cite[Corollary~2.9]{ref:BurnessGuest13}, \cite[Proposition~3.2]{ref:Harper17} and \cite[Propositions~4.2.2 and~4.2.3]{ref:Harper}, which apply the main result of \cite{ref:Burness071}, if $H \leq G$ is irreducible, then $\fpr(z,G/H) < 2q^{-(a-3)}$. Therefore, by Lemma~\ref{lem:prob_method}(ii), noting that $q_0 \leq q^{1/2}$, we obtain
\begin{align*}
P(z,tx) &< \sum_{H \in \M_1} \fpr(z,G/H) + \sum_{H \in \M_2} \fpr(z,G/H) \\
        &< 4 \cdot 5 q^{-\sqrt{n}/4} + (10a+10+\log_2\log_2{q}) \cdot 4q_0^a \cdot 2q^{-(a-3)}   \\
        &< 100n \cdot q^{-(n/4-5)}.
\end{align*}
Lemma~\ref{lem:prob_method}(i) gives $u(G) \geq \frac{1}{100n} \cdot q^{n/4-5}$, and $\frac{1}{100n} \cdot q^{n/4-5} \to \infty$ as $n \to \infty$, as required. \vspace{0.5\baselineskip}

\emph{\textbf{Case~2.} $(T,g)$ is either $(\PSL_{2m}(q),\g\wp^i)$ for odd $f/i$ or $(\PSU_{2m}(q),\wp^i)$}\nopagebreak

\begin{table}
\centering
\caption{Subcases for Case~2 in the proof of Theorem~\ref{thm:spread}} \label{tab:spread_case_2}
\begin{tabular}{ccccccc}
\hline
$T$            & $\s_1$   & $x$           & $\s_2$       & $e$  & $\rho$   & conditions  \\
\hline
$\PSL_{2m}(q)$ & $\p^f$   & $\g\wp^i$     & $\g\p^i$     & odd  & $\g$     &             \\
               &          & $\d_2\g\wp^i$ & $\d_2\g\p^i$ & odd  & $\d_2\g$ & $p$ odd     \\
$\PSU_{2m}(q)$ & $\g\p^f$ & $\wp^i$       & $\p^i$       & any  & $\g$     &             \\
               &          & $\d_2\wp^i$   & $\d_2\p^i$   & even & $\g$     & $p$ odd     \\
               &          &               &              & odd  & $\d_2\g$ & $p$ odd     \\
\hline
\end{tabular}
\end{table}

Let $X = \PSL_{2m}$. By Theorem~\ref{thm:setups}, it is sufficient to consider the five cases we define in Table~\ref{tab:spread_case_2} (here $\d_2 = \d^{\ell}$ where $\ell = (n,q-\e)/(n,q-\e)_2$). In each case, fix the Steinberg endomorphisms $\s_1$ and $\s_2$ and the graph automorphism $\rho$. Notice that $(X_{\s_1},\ws_2) = (\Inndiag(T), x)$ and $\s_2^e = \rho\s_1$, so $\ws_2^e = \ws_1 = \rho$. Let $F\:\Inndiag(T)x \to X_{\s_2}\rho$ be the Shintani map of $(X,\s_1,\s_2)$. 

Let $Y = C_X(\rho)^\circ$. By \cite[Propositions~6.4.7 and~6.6.2]{ref:Harper}, $\rho$ is an involution and commutes with $\s_2$. Moreover, if $\rho = \g$, then $Y_{\s_2} \cong \PGSp_{2m}(q_0)$, and if $\rho = \d_2\g$, then $Y_{\s_1} \cong \PDO^\eta_{2m}(q_0)$ where $\eta = (-)^{\frac{1}{2}m(q-1)+1}$. 

Fix an even integer $1 \leq \sqrt{n}/4 < k < \sqrt{n}/2$ such that $(n-k)/2$ is odd. Using the notation for decompositions from Case~1, if $\rho=\g$, then fix an odd order element $y \in \PSp_{2m}(q_0) \leq Y_{\s_2}$ such that the action of $y$ on $\F_{q_0}^{2m}$ decomposes as $k \perp (n-k)$, and if $\rho=\d_2\g$, then fix an odd order element $y \in \PSO^\eta_{2m}(q_0) \leq Y_{\s_2}$ acting on $\F_{q_0}^{2m}$ as $k^- \perp (n-k)^{-\eta}$.

Let $E$ be the Shintani map of $(Y,\s_1,\s_2)$. Fix $t \in Y_{\s_1} \leq \Inndiag(T)$ such that $E(t\s_2) = y\s_1$, so $F(tx) = y\rho$. Since $(\s_2|_Y)^e = (\rho\s_1)|_Y = \s_1|_Y$, by Lemma~\ref{lem:shintani_power}, $E(t\s_2|_{Y_{\s_1}})$ is $Y$-conjugate to $(t\s_2|_{Y_{\s_1}})^{-e}$, which implies that $F(tx)$ is $Y$-conjugate to $(tx)^{-e}$. By applying Theorem~\ref{thm:shintani_outer} to $E$, if $\rho = \g$, then $t \in \PSp_{2m}(q)$ since $y \in \PSp_{2m}(q_0)$, and if $\rho = \d_2\g$, then $t \in \PSO^\pm_{2m}(q)$ since $y \in \PSO^\eta_{2m}(q_0)$. In particular, in both cases, $tx \in Tx$. 

We now proceed as in Case~1 and write $\M(G,tg)$ as the disjoint union $\M_1 \cup \M_2$ where $\M_1$ is the set of reducible subgroups in $\M(G,tg)$. 

We claim that the only possible proper nonzero subspaces of the natural module $V_0$ of $X_{\s_2}$ stabilised by $y\rho$ are a $k$-space, an $(n-k)$-space and two $(n-k)/2$-spaces. First assume that $T = \PSU_{2m}(q)$. Then $y \in X_{\rho\g\p^i} \cong \PGL_{2m}(q_0)$ and $(y\rho)^2 = y^2$ decomposes $\F_{q_0}^{2m}$ as $k \perp (n-k)$ or $k^- \perp (n-k)^{-\eta}$, so the claim is clear in this case. Now assume that $T = \PSL_{2m}(q)$. Here $y \in X_{\rho\p^i} \cong \PGU_{2m}(q_0) \leq \PGL_{2m}(q_0^2)$ and we need to consider the action on the natural module ${\F_{q_0^2}}^{\!\!\!\!\!2m}$ for $\PGU_{2m}(q_0)$. We proceed as in Case~2b of the proof of Theorem~\ref{thm:subspaces}. Note that $y\rho \in X_{\rho\p^i}\g = X_{\rho\p^i}\wp^i \subseteq X_{\p^{2i}}\wp^i$ and let $D\:X_{\wp^{2i}}\wp^i \to X_{\wp^i}$ be the Shintani map of $(X,\p^i,\p^{2i})$. Then $D(y\rho)$ is $X$-conjugate to $y^{-2}$, so the claim follows from the previous case by Proposition~\ref{prop:subspaces_shintani} applied to $D$.

Therefore, by Proposition~\ref{prop:subspaces_shintani}, $|\M_1| \leq 4$ and every nonzero subspace of $V$ stabilised by $tx$ has dimension at least $\sqrt{n}/4$. Since $(tx)^{-2e}$ is similar to $y^2$, a power of $tx$ has a $1$-eigenspace of dimension $n-k > \sqrt{n}/2$, so \cite[Theorem~7.1]{ref:GuralnickSaxl03} implies that there are at most $10n+10+\log_2\log_2{q}$ choices for $H \in \M_2$ up to $G$-conjugacy, and by Lemma~\ref{lem:shintani_centraliser}, $\M_2$ contains at most $|C_{X_{\s_2}}(y\rho)|$ conjugates of any given $H \in \M_2$. Since $y$ has odd order, $\rho$ is an involution and $y \in C_X(\rho)$, we deduce that $|C_{X_{\s_2}}(y\rho)| \leq |C_{Y_{\s_2}}(y)| \leq 4q_0^{n/2}$. Therefore, as in Case 1, if $z \in G$ has prime order, then $P(z,tx) < 100n \cdot q^{-(n/4-5)}$ and $u(G) \geq \frac{1}{100n} \cdot q^{n/4-5}$. This completes the proof.
\end{proof}

\begin{remark} \label{rem:spread_symplectic}
Let $q$ be even. As Lemma~\ref{lem:subspaces} records, $\Sp_{2m}(q)$ has elements that act irreducibly on $\F_q^{2m}$. However, identifying $\Sp_{2m}(q)$ with $\O_{2m+1}(q)$ gives a natural action of $\Sp_{2m}(q)$ on $\F_q^{2m+1}$ and from this perspective the subgroups of $\Sp_{2m}(q)$ of type $\O^+_{2m}(q)$ and $\O^-_{2m}(q)$ are the stabilisers of particular types of $1$-spaces. Therefore, Proposition~\ref{prop:symplectic} establishes that every element of $\PGaSp_{2m}(q)$ stabilises a $1$-space of $\F_q^{2m+1}$. In this light, by Theorem~\ref{thm:subspaces}, we see that all of the exceptions in Theorem~\ref{thm:spread} arise from the fact that every element of $Tg$ stabilises a $1$-space (or, in one special case, a $2$-space).
\end{remark}

\vspace{11pt}

\noindent Scott Harper \newline
School of Mathematics, University of Bristol,  BS8 1UG, UK \newline
Heilbronn Institute for Mathematical Research, Bristol, UK \newline
\texttt{scott.harper@bristol.ac.uk}

\end{document}